\newtheorem{theorem}{Theorem}[section]
\newtheorem{lemma}[theorem]{Lemma}
\newtheorem{assumption}[theorem]{Assumption}
\newtheorem{proposition}[theorem]{Proposition}
\newtheorem{corollary}[theorem]{Corollary}
\theoremstyle{definition}
\newtheorem{definition}[theorem]{Definition}
\newtheorem{example}[theorem]{Example}
\newtheorem{observation}[theorem]{Observation}
\newtheorem{remark}[theorem]{Remark}
\numberwithin{equation}{section}
\newcommand{\rev}{\textcolor{black}}
\renewcommand{\Pr}{\mathbb{P}}	
\newcommand{\Ex}{\mathbb{E}}	
\newcommand{\ind}{\bm{1}}	
\newcommand{\set}[1]{{\{#1\}}}	
\newcommand{\tr}{\mathrm{tr}} 	
\newcommand{\vh}{V_1}
\newcommand{\vm}{V_2}
\newcommand{\vl}{V_3}
\newcommand{\lc}{\llbracket}
\newcommand{\rc}{\rrbracket}
\newcommand{\ds}{\displaystyle}
\newcommand{\eg}{|E(G_n)|}
\newcommand{\norm}[1]{\Vert#1\Vert}
\newcommand{\R}{\mathbb{R}} 
\newcommand{\Z}{\mathbb{Z}} 
\newcommand{\vr}{\operatorname{Var}}
\newcommand{\e}{\varepsilon}
\newcommand{\ceil}[1]{\lceil #1 \rceil} 
\renewcommand{\i}{\bm{i}}
\title[Fluctuations of Quadratic Chaos]{Fluctuations of Quadratic Chaos}
\author[B.\ B.\ Bhattacharya]{Bhaswar B.~Bhattacharya}
\address{B.\ B.\ Bhattacharya,
	Department of Statistics and Data Science, University of Pennsylvania.
}
\email{bhaswar@wharton.upenn.edu}
\author[S.\ Das]{Sayan Das}
\address{S.\ Das,
	Department of Mathematics, Columbia University. 
}
\email{sayan.das@columbia.edu}
\author[S.\ Mukherjee]{Somabha Mukherjee}
\address{S.\ Mukherjee, 
Department of Statistics and Data Science, National Institute of Singapore. 
}
\email{somabha@nus.edu.sg}
\author[S.\ Mukherjee]{Sumit Mukherjee}
\address{S.\ Mukherjee,
	Department of Statistics, Columbia University. 
}
\email{sm3949@columbia.edu}
\begin{document}
	\begin{abstract} 
In this paper we characterize all distributional limits of the quadratic chaos $T_n =\sum_{1\le u< v\le n} a_{u, v} X_u X_v$, where $((a_{u, v}))_{1\le u,v\le n}$ is a $\{0, 1\}$-valued symmetric matrix with zeros on the diagonal and $X_1, X_2, \ldots, X_n$ are i.i.d.~ mean $0$ variance $1$ random variables with common distribution function $F$. In particular, we show that any distributional limit of $S_n:=T_n/\sqrt{\vr[T_n]}$ can be expressed as the sum of three independent components: a Gaussian, a (possibly) infinite weighted sum of independent centered chi-squares, and a Gaussian mixture with a random variance. As a consequence, we prove a fourth moment theorem for the asymptotic normality of $S_n$, which applies even when $F$ does not have finite fourth moment. More formally, we show that $S_n$ converges to $N(0, 1)$ if and only if the fourth moment of $S_n$ (appropriately truncated when $F$ does not have finite fourth moment) converges to 3 (the fourth moment of the standard normal distribution). { The proofs combine a Lindeberg-type replacement argument and combinatorial moment calculations using results of Erd\H os and Alon on extremal subgraph counts. } 
\end{abstract}

	\subjclass[2020]{60F05, 60C05, 05D99}
	\keywords{Central limit theorems, extremal combinatorics, fourth moment phenomena, quadratic chaos.}

%
%
	
	\maketitle

	\section{Introduction}

Given a $\{0, 1\}$-valued symmetric matrix $((a_{u, v}))_{1 \leq u, v \leq n}$ with zeros on the diagonal and i.i.d.~mean $0$ and variance $1$ random variables $X_1, X_2, \ldots, X_n$ with common distribution function $F$, consider the $F$-{\it quadratic chaos}
\begin{align}\label{eq:Tn}
	T_n= \sum_{1 \leq u < v \leq n} a_{u, v}X_u X_v . 
\end{align}
In this paper we will study the asymptotic distribution of 
\begin{align}\label{eq:Sn}
	S_n := \frac{T_n}{\sqrt{\vr[T_n]}} =  \frac1{\sqrt{\sum_{1 \leq u < v  \leq n} a_{u, v}}}\sum_{1 \leq u < v \leq n} a_{u, v}X_u X_v,
\end{align}
in the regime where 
\begin{align}\label{eq:dense}
	\vr[T_n]= \sum_{1 \leq u < v  \leq n} a_{u, v} \to \infty.
\end{align} 

\rev{Quadratic forms with $\{0, 1\}$-valued coefficients appear in various contexts (see Remark \ref{r.01coef}) and there are numerous results on asymptotic normality of $S_n$ going back to the classical results of Beran \cite{beran_dependence}, Rotar \cite{rotar}, and de Jong \cite{dejong87}.} Specifically, for $F= N(0, 1)$ and $F$ is the Rademacher distribution, $S_n$ is the well-known quadratic Wiener/Gaussian chaos and the quadratic Rademacher chaos, respectively (see \cite{gaussianlimit,limitrademacher,jansongaussian,rademacher,NoPe09,book_np} and the references therein). {A quadratic chaos is also the first non-trivial component in a polynomial chaos expansion (a multi-linear polynomial of independent random variables) \cite{ghanem2003stochastic}, which play an important role in the study of $U$-statistics \cite{dependenceindependence,dm,jansongaussian,ustatistics}, Boolean functions \cite{de2018gaussian,filmus2019harmonicity,stability}, directed polymers \cite{akq,carcot,polychaos,marrel,csz2d}, random graph coloring problems \cite{bbb_pd_sm,fourthmomenttriangle,fourthmomentH,fourthmomentinfluence}, among several others. 
}

A detailed discussion of the conditions for the asymptotic normality of $S_n$ is given in Section \ref{sec:theorem4}. Broadly speaking, $S_n$ has a Gaussian limit when the dependence between the collection of random variables $\{a_{u, v}X_u X_v\}_{1 \leq u < v \leq n}$ is either local and/or weak. The following is a simple example of this: 
\begin{enumerate}
	\item[(1)] Assume $n=2 L$ and take 
	\begin{align*}
		a_{v, u}=a_{u, v}=\begin{cases}
			1 & \text{ if } u=2 \ell -1 \mbox{ and }v=2 \ell,  \mbox{ for } 1\le \ell \le L,  \\
			0 & \mbox{otherwise.}
		\end{cases}
	\end{align*} 
	Then it is easy to show that, for example, by a direct application of Stein's method based on dependency graphs (cf.~\cite[Theorem 2.7]{dependencygraph}),  
	$$S_n:=\frac1{\sqrt{L}}\sum_{\ell=1}^L X_{2\ell-1}X_{2 \ell} \stackrel{D}{\to} N(0,1).$$ 
\end{enumerate} 
\noindent Examples where $S_n$ converges to a weighted sum of independent centered chi-squared random variables are also well-known (cf.~\cite[Chapter 3]{ustatistics}). This usually  happens when the matrix $((a_{u, v}))_{1 \leq u, v, \leq n}$ is `dense', that is, it has a positive fraction of non-zero elements, as in the example below: 
\begin{enumerate}
	\item[(2)] Take $a_{u, v}=1$, for all $1 \leq u \ne v \leq n$. Then 		\begin{align}
		S_n & := \frac{1}{\sqrt{{n \choose 2}}} \sum_{1\le u<v\le n} X_uX_v \nonumber \\
		\label{eq:Kn}			 & =\frac12 \cdot \frac{1}{\sqrt{{n \choose 2}}} \left(\sum_{u=1}^n X_u \right)^2-\frac12 \cdot \frac{1}{\sqrt{{n \choose 2}}} \sum_{u=1}^n X_u^2 	\nonumber \\ 
		& \stackrel{D} \rightarrow \frac{1}{\sqrt 2} (\chi^2_1-1) , 	
	\end{align}
	since $\frac{1}{n} \left(\sum_{u=1}^n X_u \right)^2 \stackrel{D} \rightarrow \chi_1^2$ and	$\frac{1}{n} \sum_{u=1}^n X_u^2 \stackrel{P} \rightarrow 1$. 
\end{enumerate}
Perhaps more interestingly, there are also examples where the limit of $S_n$ is neither a Gaussian nor a weighted sum of chi-squares (or a combination of both). 
\begin{enumerate}
	
	\item[(3)] Take $a_{1, v}=a_{v, 1}=1$ for all $2 \leq v \leq n$ and $a_{u, v}=a_{v, u}=0$ otherwise. Then 
	\begin{align*}
		S_n = \frac1{\sqrt{n-1}}X_1 \sum_{v=2}^n X_v \stackrel{D}{\to} X_1\cdot Z, 
	\end{align*} 
	where $Z \sim {N}(0,1)$ is independent of $X_1$. Note that $X_1\cdot Z \stackrel{D} = N(0, X_1^2)$, which is a normal distribution with random variance $X_1^2$.\footnote{Given a non-negative random variable $A$, we denote by $Z \sim N(0, A)$ the normal distribution with variance $A$ ({\it normal variance mixture}). More precisely, $Z$ is a random variable with characteristic function $\phi_Z(t):=\Ex[{e^{-\frac{1}{2}A^2t^2}}]$, where the expectation is taken over the randomness of $A$. } This limit is non-Gaussian whenever $X_1$ is not the Rademacher distribution. Moreover, this limit, unlike those in the previous two examples, is `non-universal', that is, it depends on the distribution $F$.  
\end{enumerate}

Despite being a quantity of fundamental interest, it appears that the regime where $S_n$ neither has a Gaussian nor a chi-squared-type limit has not been systematically explored. In fact, the different limits obtained in the examples above raise the following natural question: {\it What are the class of all possible limiting distributions of the $F$-quadratic chaos $S_n$ in the regime} \eqref{eq:dense}? 
In this paper we answer this question by proving a general decomposition theorem which allows us to express the limiting distribution of $S_n$ as the sum of three independent components: a Gaussian, a (possibly) infinite weighted sum of independent $\chi^2_1-1$ random variables, and a normal variance mixture, where the random variance is a (possibly) infinite quadratic form in the variables $\{X_u\}_{u \geq 1}$ (Theorem \ref{thm:main}). Moreover, we show that any distributional limit of $S_n$ must be of the aforementioned form (Theorem \ref{thm:converse}), thus identifying all possible limiting distributions of $S_n$. As a consequence, we obtain a necessary and sufficient condition for the asymptotic normality of $S_n$. In particular, we show in Theorem \ref{thm:moment4} that $S_n$ converges to $N(0, 1)$ if and only if the fourth-moment of $S_n$ (appropriately truncated when $\Ex[X_1^4] = \infty$) converges to 3 (the fourth-moment of $N(0, 1)$). The key idea in the proofs is to decompose the matrix $A=((a_{u, v}))_{1 \leq u, v \leq n}$ into parts, such that contributions to $S_n$ from the corresponding parts are either asymptotically negligible or mutually  independent. For this we use estimates from extremal combinatorics \cite{alon81,hypergraphcopies} to bound various moments of $S_n$ and a Lindeberg-type argument for replacing $F$ with the standard Gaussian distribution (in the relevant parts of $A$), for which the limiting distribution can be explicitly computed. 
The formal statements of the results are given below. 


\subsection{Limiting Distribution of $F$-Quadratic Chaos}
\label{sec:theoremquadratic}

Hereafter, we will adopt the language of graph theory and think of the matrix $((a_{u, v}))_{1\le u, v \le n}$ as an adjacency matrix of a graph on $n$ vertices. We begin with the following definition: 

\begin{definition}\label{defg} We denote by $\mathscr{G}_n$ the space of all simple undirected graphs on $n$ vertices labeled by $\lc n \rc:=\{1,2,\ldots,n\}$, where the vertices are labeled in non-increasing order of the degrees $d_1\ge d_2 \ge \cdots \ge d_n$, where $d_v$ denotes the degree of the vertex labeled $v$.  
\end{definition}

For a graph $G_n\in \mathscr{G}_n$, we denote the adjacency matrix by $A(G_n) = (a_{u,v})_{u,v \in V(G_n)}$, the vertex set by $V(G_n)= \lc n \rc $, and the edge set by $E(G_n)$. Then the quadratic form in \eqref{eq:Tn} can be re-written (in terms of the adjacency matrix of $G_n$) as follows: 
\begin{align}\label{def:TG_n} 
	T_{G_n}(\bm{X}_n):=\sum_{1\le u < v \le n}a_{u, v} X_u X_v =\frac12\bm{X}_n^{\top}A(G_n)\bm{X}_n,
\end{align}
where $\bm{X}_n:=(X_1,X_2,\ldots,X_n)^{\top}$. Note that $\Ex[T_{G_n}(\bm{X}_n)] =0$ and $\vr[T_{G_n}(\bm{X}_n)]=|E(G_n)|$. Throughout, we will assume that $\{G_n\}_{n\ge 1}$ is a sequence of graphs with $G_n \in \mathscr{G}_n$ and $\eg\to\infty$ (recall \eqref{eq:dense}). Then the rescaled quadratic form \eqref{eq:Sn} can be re-written as: 
\begin{align}\label{def:SG_n}
	S_{G_n}(\bm{X}_n) := \frac{T_{G_n}(\bm{X}_n)}{\sqrt{|E(G_n)|}}\stackrel{\eqref{def:TG_n}}=\frac1{2\sqrt{|E(G_n)|}}\bm{X}_n^{\top}A(G_n)\bm{X}_n. 
\end{align}

\begin{remark}\label{r.01coef} \rev{The statistic $T_{G_n}(\bm{X}_n)/S_{G_n}(\bm X_n)$ is a prototypical example of a degenerate  $U$-statistic of order 2 \cite{dependenceindependence,jansongaussian,ustatistics} which arises in various contexts, for example, the Hamiltonian of the Ising model on $G_n$ \cite{bmpotts,chatterjee}, non-parametric two-sample tests based on geometric graphs \cite{fr}, testing independence in auto-regressive models \cite{beran_dependence}, random graph-coloring problems \cite{bbb_pd_sm,xiao}, and Ramsey theory \cite{anitconcentrationramsey}. The related statistic where $F$ is the Bernoulli distribution is also of interest, due its connections to the birthday problem \cite{diaconismosteller,dasguptasurvey} and motif counting  \cite{kw1}, and has been studied recently in \cite{bbb_sm_sm}.} 
\end{remark}

To describe the limiting distribution of $S_{G_n}(\bm{X}_n)$ we assume the following two conditions on the graph sequence $\{G_n\}_{n \geq 1}$:

\begin{assumption}[Co-degree condition] \label{asum1} {\em The graph sequence $\{G_n\}_{n\ge 1}$ will be said to satisfy the {\it $\Sigma$-co-degree condition} if there exists an infinite dimensional matrix $\Sigma=((\sigma_{st}))_{s, t\ge 1}$ such that for each fixed $s, t\ge 1$,
		\begin{align}\label{as:point}
			\lim_{n \rightarrow \infty}\frac1{\eg}\sum_{v =1}^n a_{s, v}a_{v, t} = \sigma_{st}.
	\end{align}}
\end{assumption}

\begin{assumption}[Spectral Condition] \label{asum2}  {\em Fix $K \geq 0$ and let $G_{n,K}$ be the subgraph of $G_n$ induced by the vertex set $[K+1,n]$. Denote by $A_{n, K}$ the adjacency matrix of $G_{n, K}$.  Then the graph sequence $\{G_n\}_{n\ge 1}$ will be said to satisfy the {\it $\bm{\rho}$-spectral condition}, if there exists a non-negative sequence $\bm \rho = ( \rho_1, \rho_2, \ldots )$ such that for every fixed $s \geq 1$, 
		\begin{align}\label{as:eigen}
			\lim_{K\to\infty}\lim_{n\to\infty} \frac{1}{\sqrt{\eg}} \lambda_{n, K}^{(s)} = \rho_s,  
		\end{align} 
		where  ${|\lambda_{n, K}^{(1)}| \ge |\lambda_{n, K}^{(2)}| \ge \cdots \geq |\lambda_{n, K}^{(n-K)}|}$ are the eigenvalues of $A_{n, K}$. } 
\end{assumption}

Note that $\sum_{v =1}^n a_{s, v}a_{v, t}$ is the {\it co-degree} (the number of common neighbors) of the vertices $u$ and $v$. Since the vertices of $G_n$ are arranged in non-increasing order of the degrees, Assumption \ref{asum1} means  that the scaled co-degrees between pairs of `high'-degree vertices in $G_n$ have a limit. On the other hand, Assumption \ref{asum2} ensures that the edge of the spectrum (properly scaled) of the graph obtained from $G_n$ by removing the `high'-degree vertices have a limit. 

With the above assumptions we are now ready to state the main result of the paper.


\begin{theorem}\label{thm:main} 
	Let $\{X_u\}_{u \geq 1}$ be a collection of i.i.d.~mean $0$ and variance $1$ random variables with common distribution function $F$. Suppose there exists an infinite dimensional matrix $\Sigma=((\sigma_{st}))_{s, t \ge 1}$ and vector $\bm \rho = ( \rho_1, \rho_2, \ldots )$ such that the graph sequence $\{G_n\}_{n\ge 1}$, with $G_n \in \mathscr G_n$, 
	satisfies the $\Sigma$-codegree condition and the $\bm \rho$-spectral condition as in Assumption \ref{asum1} and \ref{asum2}, respectively. Then for 
	$\bm{X}_n=(X_1,X_2,\ldots,X_n)^\top$ and $S_{G_n}(\bm{X}_n)$ as defined in \eqref{def:SG_n} the following hold: 
	\begin{align}\label{eq:main}
		S_{G_n}(\bm{X}_n)\stackrel{D}{\to} Q:=Q_1+Q_2+Q_3,
	\end{align}
	where $Q_1, Q_2,$ and $Q_3$ are independent random variables with \begin{itemize}
		\item[--] $Q_1\sim N(0,\bm{X}_{\infty}^{\top}\Sigma \bm{X}_{\infty})$ where $\bm{X}_{\infty}=(X_1,X_2,\ldots)^{\top}$ and $\Sigma$ is defined in Assumption \ref{asum1}.
		\item[--] $Q_2 \sim N(0,\rho^2)$ where $\rho^2 :=1-\sum_{s=1}^{\infty} (\sigma_{ss}+\frac12\rho_s^2)$, 
		\item[--] $Q_3 \sim \frac12 \sum_{s=1}^\infty \rho_s Y_s$ where $\{Y_s\}_{s \geq 1}$ are i.i.d.~$\chi_1^2-1$ random variables and $\{ \rho_s\}_{s \geq 1}$ are defined in \eqref{as:eigen}. 
	\end{itemize} 
\end{theorem}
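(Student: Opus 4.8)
The plan is to prove the convergence for each fixed value of a cut-off parameter $K$ — the number of highest-degree vertices to be treated separately — and then let $K\to\infty$. Concretely, I would exhibit for each $K$ an explicit random variable $Q^{(K)}$ with $S_{G_n}(\bm X_n)\stackrel{D}{\to}Q^{(K)}$ as $n\to\infty$, show that the distance between the laws of $S_{G_n}(\bm X_n)$ and $Q^{(K)}$ (in any metrization of weak convergence) has $\limsup_n$ equal to $o_K(1)$, and show that $Q^{(K)}\stackrel{D}{\to}Q=Q_1+Q_2+Q_3$ as $K\to\infty$; the triangle inequality then delivers the theorem. A preliminary reduction, needed only when $F$ has infinite third (or fourth) moment, replaces each $X_v$ that enters $S_{G_n}(\bm X_n)$ only through a high-index coordinate by a truncated and recentered copy; this perturbs $\vr[T_{G_n}]$ and the law of $S_{G_n}(\bm X_n)$ by a vanishing amount while making finite the higher moments used below. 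The low-index variables $X_1,\dots,X_K$, which may survive undiluted in the limit (as in Example~(3)), are not truncated.

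Fixing $K$, I split the quadratic form according to whether the vertex indices lie in $\lc K\rc$ or in $[K+1,n]$,
\[
\bm X_n^{\top}A(G_n)\bm X_n=R_n^{(1)}+2R_n^{(2)}+R_n^{(3)},
\]
with hub--hub part $R_n^{(1)}=\sum_{u,v\le K}a_{u,v}X_uX_v$, cross part $R_n^{(2)}=\sum_{u\le K}X_u\sum_{v>K}a_{u,v}X_v$, and low part $R_n^{(3)}=\bm X_{n,K}^{\top}A_{n,K}\bm X_{n,K}$. Since $K$ is fixed, $R_n^{(1)}/\sqrt{\eg}\to 0$ almost surely. For $R_n^{(2)}$, the vector $\bigl(\tfrac1{\sqrt{\eg}}\sum_{v>K}a_{u,v}X_v\bigr)_{u\le K}$ depends only on $\{X_v\}_{v>K}$, hence is independent of $(X_1,\dots,X_K)$, and a multivariate Lindeberg CLT — with the $\Sigma$-co-degree condition supplying $\lim_n\tfrac1{\eg}\sum_{v>K}a_{s,v}a_{t,v}=\sigma_{st}$ — shows it converges to $N(0,\Sigma_K)$, $\Sigma_K=(\sigma_{st})_{s,t\le K}$. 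Thus $R_n^{(2)}/\sqrt{\eg}\stackrel{D}{\to}Q_1^{(K)}$, which is $N(0,\bm X_K^{\top}\Sigma_K\bm X_K)$ conditionally on $\bm X_K$ and tends in law to $Q_1$ as $K\to\infty$ because $\bm X_K^{\top}\Sigma_K\bm X_K\to\bm X_\infty^{\top}\Sigma\bm X_\infty$ almost surely. Because $R_n^{(2)}$ and $R_n^{(3)}$ share the variables $\{X_v\}_{v>K}$, they must be treated jointly; for this I would run a Lindeberg replacement, swapping each $X_v$, $v>K$, for an independent $N(0,1)$ variable $Z_v$ inside both. Since $A(G_n)$ has zero diagonal, the quadratic term of the swap vanishes and the total error is $\lesssim\eg^{-3/2}\sum_{v>K}\Ex\bigl|\sum_u a_{u,v}X_u\bigr|^3\lesssim\eg^{-3/2}(\max_{v>K}d_v)^{1/2}\sum_v d_v\lesssim(d_{K+1}/\eg)^{1/2}$; by the $\Sigma$-co-degree condition $d_{K+1}/\eg\to\sigma_{K+1,K+1}=o_K(1)$ (the diagonal entries $\sigma_{ss}$ being non-increasing and summable), which is precisely why the hubs must be removed first. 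Making these bounds — and the companion estimates for $\tr(A_{n,K}^4)$, the number of non-isolated vertices of $G_{n,K}$, and counts of short closed walks in $G_{n,K}$ — rigorous is where the extremal inputs \cite{alon81,hypergraphcopies} enter.

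After the replacement, condition on $\bm X_K$ and diagonalize $A_{n,K}=\sum_j\lambda_{n,K}^{(j)}\bm q_j\bm q_j^{\top}$. Writing $\tilde Z_j=\bm q_j^{\top}\bm Z_{n,K}$ (i.i.d.\ $N(0,1)$), $\mu_j=\lambda_{n,K}^{(j)}/\sqrt{\eg}$, $\bm b_u=\tfrac1{\sqrt{\eg}}(a_{u,v})_{v>K}$, $\nu_j=\sum_{u\le K}X_u\bm b_u^{\top}\bm q_j$, and using $\sum_j\lambda_{n,K}^{(j)}=0$, one gets $\tfrac1{\sqrt{\eg}}\bigl(R_n^{(2)}+\tfrac12 R_n^{(3)}\bigr)=\sum_j\bigl(\nu_j\tilde Z_j+\tfrac12\mu_j(\tilde Z_j^2-1)\bigr)$ with $\sum_j\nu_j^2=\|\sum_u X_u\bm b_u\|^2\to\bm X_K^{\top}\Sigma_K\bm X_K$. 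The $\bm\rho$-spectral condition gives $\mu_j\to\rho_j^{(K)}$ (with $\rho_j^{(K)}\to\rho_j$ as $K\to\infty$) for each fixed $j$, while $\sum_j\mu_j^2=2|E(G_{n,K})|/\eg\to 2-2\sum_{u\le K}\sigma_{uu}$; the amount by which this limit exceeds $\sum_j(\rho_j^{(K)})^2$ is a nonnegative ``bulk mass'' $2\rho^2_{(K)}$ that leaks to infinitely many infinitesimal eigenvalues. Splitting the sum at a large fixed index $s$: the finitely many $j\le s$ contribute the weighted $\chi_1^2-1$ terms, while for $j>s$ — where $|\mu_j|\le\mu_{s+1}$ is eventually small — a CLT for independent centered chi-squares yields a further centered weighted-$\chi^2$ tail together with an independent Gaussian carrying the bulk mass. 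Letting $s\to\infty$, these assemble into $Q_3^{(K)}=\tfrac12\sum_j\rho_j^{(K)}Y_j$ and $N(0,\rho^2_{(K)})$, with $\rho^2_{(K)}=\tfrac12\bigl(2-2\sum_{u\le K}\sigma_{uu}-\sum_j(\rho_j^{(K)})^2\bigr)\to\rho^2$ as $K\to\infty$.

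The main obstacle is upgrading this into the claimed \emph{independent} decomposition. The weighted-$\chi^2$ family and the bulk Gaussian live on disjoint coordinates $\tilde Z_j$, hence are independent given $G_n$; the genuine difficulty is the linear term $\sum_j\nu_j\tilde Z_j$, which carries $Q_1^{(K)}$ but a priori correlates with the coordinates of the large eigenvalues. Conditionally on $\bm X_K$ the characteristic function is exactly
\[
\Ex\Bigl[\exp\Bigl(\mathrm{i}t\sum_j\bigl(\nu_j\tilde Z_j+\tfrac12\mu_j(\tilde Z_j^2-1)\bigr)\Bigr)\,\Big|\,\bm X_K\Bigr]=\prod_{j}(1-\mathrm{i}t\mu_j)^{-1/2}\exp\!\Bigl(\frac{-t^2\nu_j^2/2}{1-\mathrm{i}t\mu_j}\Bigr),
\]
and the crux is to show that the cross-weights $\nu_j$ attached to the finitely many large eigenvalues vanish in the limit — equivalently, that the normalized hub-neighborhoods $\bm b_u$ decouple from the top eigenvectors of $G_{n,K}$, a decoupling which is in fact forced by the conclusion, since otherwise non-central chi-squares would appear — so that those factors become pure $\chi_1^2$ factors and the whole linear mass $\sum_j\nu_j^2\to\bm X_K^{\top}\Sigma_K\bm X_K$ merges with the bulk Gaussian. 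This is where the degree ordering and the combinatorial bounds are used most heavily. Granting it, the conditional characteristic function converges to $\prod_j(1-\mathrm{i}t\rho_j^{(K)})^{-1/2}\exp\bigl(-\tfrac{t^2}{2}(\rho^2_{(K)}+\bm X_K^{\top}\Sigma_K\bm X_K)\bigr)$, up to a deterministic phase that is absorbed when the $\chi_1^2$'s are recentered to $\chi_1^2-1$; this identifies the $n\to\infty$ limit (for fixed $K$) as $Q^{(K)}=Q_1^{(K)}+N(0,\rho^2_{(K)})+Q_3^{(K)}$ with the three summands independent. Finally, letting $K\to\infty$ and splitting the conditionally Gaussian part $Q_1^{(K)}+N(0,\rho^2_{(K)})$ into its two independent pieces in the limit yields $S_{G_n}(\bm X_n)\stackrel{D}{\to}Q_1+Q_2+Q_3$ with $Q_1,Q_2,Q_3$ independent.
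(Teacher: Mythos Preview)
Your overall architecture—partition by degree, Lindeberg-replace the non-hub variables by Gaussians, spectral-decompose $A_{n,K}$, let $K\to\infty$—is sound and parallels the paper's. Your identification of $Q^{(K)}$ and its convergence to $Q$ is correct. The genuine gap is exactly where you flag it and then ``grant it'': proving that the cross-weights $\nu_j=\sum_{u\le K}X_u\,\bm b_u^\top\bm q_j$ attached to the large eigenvalues of $A_{n,K}$ vanish. The remark that this decoupling is ``forced by the conclusion'' is circular, and the pointer to ``degree ordering and combinatorial bounds'' is not a proof.

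The paper's route to this independence is structurally different from your two-way hub/non-hub split. It uses a \emph{three-way} partition $V_1=\lc 1,K_1\rc$, $V_2=\lc K_1{+}1,\lfloor K_2\sqrt{|E(G_n)|}\rfloor\rc$, $V_3=\lc\lfloor K_2\sqrt{|E(G_n)|}\rfloor{+}1,n\rc$, introducing an intermediate ``medium-degree'' layer, and establishes the independence in two stages. First (Lemma~\ref{momind}), the low--low piece $U_{33}$ is shown asymptotically independent \emph{in all moments} from $(U_{13},U_{22},U_{23})$. This is where the Alon/Friedgut--Kahn inputs are actually used: joint moments are expanded over multigraphs with $a{+}b{+}c{+}d$ edges, these are classified by their fractional stable number $\gamma(H)$, and the surviving terms are killed by cycle-count estimates (Observation~\ref{obs:C}) that exploit $\max_{v\in V_3}d_v\le \tfrac{2}{K_2}\sqrt{|E(G_n)|}$. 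Second (Lemma~\ref{asymp-ind}), $U_{13}$ is decoupled from $U_{22}+U_{23}$ by a conditional-Gaussian argument: conditioning on $(\bm X_{n,M}^{(1)},\bm Z_n^{(2)})$ makes the pair bivariate normal, and the off-diagonal covariance $\tfrac{1}{|E(G_n)|}(\bm Z_n^{(2)})^\top A_{23}A_{31}\bm X_{n,M}^{(1)}$ is shown to vanish via a degree-sum over $V_2$.

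Both stages need the middle layer. In your two-way scheme the non-hub vertices can have degree up to $2|E(G_n)|/(K{+}1)$, which is not $o(\sqrt{|E(G_n)|})$; the cycle-count bound of Observation~\ref{obs:C} then fails at its first step. And without a separate $\bm Z_n^{(2)}$ to condition on, there is no way to make $(R_n^{(2)},R_n^{(3)})$ conditionally jointly Gaussian, since $R_n^{(3)}$ is quadratic in the very variables you would need to integrate out. A direct proof that $\bm b_u^\top\bm q_j\to 0$ for the top eigenvectors of $A_{n,K}$ would be a genuinely different argument from the paper's; you have not supplied one, and I do not see a short route. Finally, your placement of the extremal combinatorics—``to make rigorous'' the Lindeberg error and trace bounds—is off: those are elementary; the Alon/Friedgut--Kahn machinery is used precisely for the moment-independence step that you grant.
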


\begin{remark} Note that the distribution of $Q_1$ in \eqref{eq:main} is a {\it normal variance mixture}, where the random variance $\bm{X}_{\infty}^{\top}\Sigma \bm{X}_{\infty}=\sum_{1\le s, t \le \infty} \sigma_{s, t} X_s X_t$ is {well-defined under Assumption \ref{asum1}} (see Lemma \ref{mim}). Moreover, the distribution of $Q_3$, which is an infinite weighted sum of centered $\chi^2_1$ random variables, is also {well-defined under Assumption \ref{asum2}} (see Proposition \ref{p:lim2}). 
\end{remark}

The proof of Theorem \ref{thm:main} is given in Section \ref{sec:pfquadratic}. The proof proceeds by partitioning the vertices of $G_n$ into three components based on their degrees, which we refer to as the `high'-degree, `medium'-degree, and `low'-degree vertices, respectively (see \eqref{eq:V123} for the precise definition). This partitions the edge set of $G_n$ into 6 parts, namely, the high-to-high (\texttt{hh}), high-to-medium (\texttt{hm}), high-to-low (\texttt{hl}), medium-to-medium (\texttt{mm}), medium-to-low (\texttt{ml}), and low-to-low (\texttt{ll}) edges (see Figure \ref{fig:graphpf}). The proof then involves analyzing the contributions from these 6 components. (A detailed overview of the proof outline is given in Section \ref{sec:pfquadratic}.) In particular, we show the following: 

\begin{itemize}
	
	\item The contributions from the \texttt{hh} and \texttt{hm} edges are asymptotically negligible. 
	
	\item 
	The joint contribution from the \texttt{mm}, \texttt{ml}, and \texttt{ll} edges converges to $Q_2+Q_3$, where $Q_2$ and $Q_3$ are as defined in Theorem \ref{thm:main}. Observe that this limit is {\it universal}, that is, it does not depend on the distribution $F$. 
	
	\item The contribution from the \texttt{hl} edges is asymptotically independent from the rest and converges to the  normal variance mixture $Q_1$. Note that here the limit is {\it non-universal} because the (random) variance of $Q_1$ depends on the distribution of $F$.  For the proof of the asymptotic independence we use estimates from extremal combinatorics \cite{alon81,hypergraphcopies} to bound the number of copies of various subgraphs of $G_n$ which arise in the moments of $S_{G_n}$. 
\end{itemize}

\begin{remark}\label{remark:normalqf}  A particular case that has classically studied is the quadratic Wiener chaos, that is, when $F=N(0, 1)$ is the standard normal distribution (see \cite{gaussianlimit,multidimensionaldistribution,quadraticnormallimit}). In this case, using the spectral theorem, it follows that any distributional limit of $S_{G_n}$ is of the form $Q_2'+Q_3'$ with $(Q_2', Q_3')$ mutually independent, where $Q_2'$ is Gaussian and $Q_3'$ is an infinite weighted sum of centered $\chi_1^2$ random variables. On the other hand, Theorem \ref{thm:main} implies that $S_{G_n} \stackrel{D} \rightarrow Q_1+Q_2+Q_3$ 
	with $(Q_1, Q_2, Q_3)$ mutually independent, where $Q_1\sim N(0,{\bm X}_\infty^\top\Sigma{\bm X}_\infty)$, $Q_2$ is Gaussian, and $Q_3$ is an infinite weighted sum of centered $\chi_1^2$ random variables. 
	This apparent dichotomy can be explained by observing that in the Gaussian case $$ N(0,{\bm X}_\infty^\top\Sigma{\bm X}_\infty)\stackrel{D}{=}\sum_{s=1}^\infty \eta_s Y_s,$$
	where $\{Y_s\}_{s\ge 1}$ are i.i.d. $\chi_1^2-1$ and for some sequence $\{\eta_s\}_{s \geq 1}$ (see Lemma \ref{lem:normalqf}). Consequently, in the Gaussian case $Q_1+Q_3$ is an infinite weighted sum of centered $\chi_1^2$ random variables. 
\end{remark}

%
%

Given the above discussion, it is natural to wonder whether Assumptions \ref{asum1} and \ref{asum2} are
necessary for the distributional convergence of $S_{G_n}(\bm{X}_n)$. More generally, one can ask what are the possible limiting distributions of $S_{G_n}(\bm{X}_n)$? We answer this question is the following theorem: 

\begin{theorem}\label{thm:converse} Let  $\bm{X}_n=(X_1,X_2,\ldots,X_n)^\top$ be i.i.d.~mean $0$ and variance $1$ random variables with common distribution function $F$ and consider a sequence of graphs $G_n\in \mathscr{G}_n$, where $\mathscr{G}_n$ is as in Definition \ref{defg}. If the rescaled quadratic form $S_{G_n}(\bm{X}_n)$, defined in \eqref{def:SG_n}, converges weakly to some random variable $Q'$, then $Q'$ must be of the form $Q$ defined in \eqref{eq:main}.
\end{theorem}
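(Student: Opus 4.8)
The plan is to derive Theorem~\ref{thm:converse} from Theorem~\ref{thm:main} by a compactness (subsequence-extraction) argument. The point is that \emph{every} quantity entering Assumptions~\ref{asum1} and~\ref{asum2} is a priori bounded, so one can always pass to a subsequence $\{G_{n_j}\}$ along which both assumptions hold for some $\Sigma$ and some $\bm\rho$. Granting this, Theorem~\ref{thm:main} gives $S_{G_{n_j}}(\bm X_{n_j})\stackrel{D}{\to}Q(\Sigma,\bm\rho)$, a random variable of the form \eqref{eq:main}; on the other hand the hypothesis $S_{G_n}(\bm X_n)\stackrel{D}{\to}Q'$ forces $S_{G_{n_j}}(\bm X_{n_j})\stackrel{D}{\to}Q'$ as well, so by uniqueness of weak limits $Q'\stackrel{D}{=}Q(\Sigma,\bm\rho)$, which is exactly the assertion. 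Thus the whole proof reduces to producing the subsequence.

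For Assumption~\ref{asum1}, observe that the scaled co-degrees satisfy $\frac1{\eg}\sum_{v=1}^{n}a_{s,v}a_{v,t}\le\frac{d_s}{\eg}\le\frac{d_1}{\eg}\le\frac{\sum_{v}d_v}{\eg}=2$ for all $n$ and all $s,t$. A diagonal argument over the countable index set $\{(s,t):s,t\ge1\}$ therefore yields a subsequence along which \eqref{as:point} holds for every fixed $s,t$, defining $\Sigma$; the resulting limit matrix automatically makes $\bm X_\infty^\top\Sigma\bm X_\infty$ well-defined (Lemma~\ref{mim}), so this piece poses no difficulty.

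For Assumption~\ref{asum2}, the scaled eigenvalues lie in $[-\sqrt2,\sqrt2]$ because $\sum_{s}(\lambda_{n,K}^{(s)})^2=\tr(A_{n,K}^2)=2|E(G_{n,K})|\le2\eg$. A diagonal argument over the countable set $\{(K,s):K\ge0,\ s\ge1\}$ gives a subsequence $\{n_j\}$ along which $\lim_j\frac1{\sqrt{|E(G_{n_j})|}}\lambda_{n_j,K}^{(s)}=:\mu_K^{(s)}$ exists for every $K$ and $s$. It remains to pass to the limit $K\to\infty$: here I would use that $A_{n,K+1}$ is a principal submatrix of $A_{n,K}$, so Cauchy interlacing makes the eigenvalues of $A_{n,K}$, listed in decreasing order from the top of the spectrum (respectively in increasing order from the bottom), monotone in $K$; this monotonicity is inherited by the corresponding limits, so they converge as $K\to\infty$, and merging the two spectral edges and re-sorting by magnitude produces the sequence $\bm\rho$. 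Square-summability of $\bm\rho$ (so that $Q_3$ is well-defined, cf.\ Proposition~\ref{p:lim2}) follows from $\sum_s(\mu_K^{(s)})^2\le2|E(G_{n,K})|/|E(G_n)|\le2$. This establishes the $\bm\rho$-spectral condition along $\{n_j\}$.

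I expect the iterated limit $\lim_{K\to\infty}$ in Assumption~\ref{asum2} to be the only real obstacle. Unlike the co-degree condition, which is a single limit handled by plain diagonalization, no further subsequencing in $n$ can repair a failure of $\lim_K\mu_K^{(s)}$ to exist, so one genuinely needs the interlacing-monotonicity in $K$; some additional care is required to track the signs of the extreme eigenvalues (hence the separate treatment of the two ends of the spectrum) so that the signed quantities, not merely their magnitudes, converge as $K\to\infty$. Once this structural point is settled, everything else is soft: diagonalization, the already-proven Theorem~\ref{thm:main}, and uniqueness of the weak limit.
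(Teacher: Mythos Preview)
Your overall strategy---pass to a subsequence along which Assumptions~\ref{asum1} and~\ref{asum2} both hold, invoke Theorem~\ref{thm:main}, and conclude by uniqueness of weak limits---is exactly the paper's. The only difference is in the handling of the $K\to\infty$ limit in Assumption~\ref{asum2}. The paper does not use interlacing at all: having extracted (via Tychonoff/diagonalization) a subsequence in $n$ along which $\mu_K^{(s)}:=\lim_n \lambda_{n,K}^{(s)}/\sqrt{\eg}$ exists for every $K,s$, it simply applies compactness a \emph{second} time to the bounded array $(\mu_K^{(s)})_{K,s\ge 1}$, extracting a further subsequence in $K$ along which $\mu_K^{(s)}\to\rho_s$ for every $s$; Theorem~\ref{thm:main} is then invoked along that subsequence in $K$ (its proof only uses the iterated limit, so restricting $K$ to a subsequence is harmless).

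This sidesteps precisely the sign-tracking issue you anticipate. Your interlacing argument does give monotonicity, hence convergence, of the $s$-th largest and $s$-th smallest eigenvalues in $K$, but Assumption~\ref{asum2} orders the eigenvalues by \emph{absolute value}: if the positive and negative spectral edges have limits of equal magnitude, $\mu_K^{(s)}$ can oscillate in sign as $K$ increases, so your merged-and-resorted $\bm\rho$ need not coincide with $\lim_K\mu_K^{(s)}$, and Assumption~\ref{asum2} as literally stated is not verified. The gap is repairable (e.g.\ by arguing that the distribution of $Q_2+Q_3$ in Proposition~\ref{p:lim2} depends only on the multiset of signed limits, or---much more simply---by adopting the paper's second diagonalization in $K$), but the paper's route is both shorter and avoids the case analysis altogether.
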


The result above shows that Theorem \ref{thm:main} indeed characterizes all possible distributional limits of $S_{G_n}(\bm{X}_n)$. 
The proof of this result entails showing that Assumptions \ref{asum1} and \ref{asum2} always hold along a subsequence, and then we invoke Theorem \ref{thm:main} along the subsequence to characterize the limit.

\subsection{Characterizing Normality: The Fourth Moment Phenomenon} 
\label{sec:theorem4} 

Theorem \ref{thm:main} can be used to characterize when the limiting distribution of $S_{G_n}(\bm{X}_n)$ is asymptotically Gaussian. 
To this end, for $M > 0$ let $$a_{M}:=\Ex [X_1\ind\{|X_1|\le M\}] \quad \text{ and } \quad b_M:=\vr[X_1\ind\{|X_1|\le M\}].$$ Note that $a_M \to 0$ and $b_M \to 1$. Define \begin{align}\label{def:trunc}
	X_{u, M}:=b_M^{-\frac{1}{2}}(X_u \ind \{|X_u|\le M\}-a_M ) , 
\end{align} 
for $1 \leq u \leq M$. Observe that for all large enough $M$ we have $b_M>0$ and hence, 
$\bm{X}_{n, M} := (X_{1, M},\ldots,X_{n, M} )^\top$ is well defined and consists of i.i.d.~ mean 0, variance 1 \textit{bounded} random variables. Therefore, without loss of generality we will hereafter assume that $M$ is large enough. Now, we have the following result: 

\begin{theorem}\label{thm:moment4} Let  $\bm{X}_n=(X_1,X_2,\ldots,X_n)^\top$ be i.i.d.~mean $0$ and variance $1$ random variables with common distribution function $F$ and consider a sequence of graphs $\{G_n\}_{n \geq 1}$ with $G_n \in \mathscr{G}_n$. 
	
	\begin{itemize} 
		
		\item[$(1)$] Then 
		\begin{align*}
			S_{G_n}(\bm{X}_n)\stackrel{D}{\to} N(0,1) \ \mbox{ if and only if } \ \lim_{M \rightarrow \infty }\lim_{n\to\infty} \Ex[(S_{G_n}(\bm{X}_{n, M}))^4]=3, 
		\end{align*} 
		where $S_{G_n}$ is as in \eqref{def:SG_n} and $\bm{X}_{n, M}$ is defined via \eqref{def:trunc}. 	
		
		\item[$(2)$] If further $\mathbb E[X_1^4] < \infty$, then
		\begin{align*}
			S_{G_n}(\bm{X}_n)\stackrel{D}{\to} N(0,1) \ \mbox{ if and only if } \ \lim_{n\to\infty} \Ex[(S_{G_n}(\bm{X}_n))^4]=3.
		\end{align*}

	\end{itemize} 
	
\end{theorem}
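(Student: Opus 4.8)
\textbf{Proof proposal for Theorem \ref{thm:moment4}.}

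The plan is to deduce both parts from Theorem \ref{thm:main} (and Theorem \ref{thm:converse}) together with a uniform-integrability / moment-convergence argument. For the ``if'' direction of part (1), suppose $\lim_{M\to\infty}\lim_{n\to\infty}\Ex[(S_{G_n}(\bm X_{n,M}))^4]=3$. By Theorem \ref{thm:converse}, along any subsequence along which $S_{G_n}(\bm X_n)$ converges, the limit is some $Q=Q_1+Q_2+Q_3$ as in \eqref{eq:main}; passing to a further subsequence we may assume Assumptions \ref{asum1} and \ref{asum2} hold with data $(\Sigma,\bm\rho)$. I would first show that the truncated statistic $S_{G_n}(\bm X_{n,M})$, for $M$ fixed, converges to the analogue of $Q$ built from the truncated variables $\{X_{u,M}\}$ — call it $Q^{(M)}=Q_1^{(M)}+Q_2+Q_3$, where only the normal-variance-mixture part changes its mixing law because the universal parts $Q_2,Q_3$ do not depend on $F$. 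Since the $X_{u,M}$ are bounded, $S_{G_n}(\bm X_{n,M})$ has uniformly (in $n$) bounded moments of all orders — this is where I would invoke the extremal-combinatorics subgraph-counting bounds alluded to after Theorem \ref{thm:main} to control $\Ex[(S_{G_n}(\bm X_{n,M}))^k]$ by a quantity depending only on $k$ and $M$ — so the fourth moment converges to $\Ex[(Q^{(M)})^4]$. Letting $M\to\infty$, the hypothesis forces $\lim_{M}\Ex[(Q^{(M)})^4]=3$. One then computes $\Ex[(Q^{(M)})^4]$ in terms of $\Sigma,\bm\rho$ and the moments of the truncated law, using independence of the three components and the fact that each component is a (conditionally) Gaussian or centered-$\chi^2$ object; the point is that $\Ex[Q^4]=3(\Ex[Q^2])^2 + (\text{strictly positive defect})$ unless $Q_1$ and $Q_3$ both vanish, i.e.\ unless $\Sigma=0$ and $\bm\rho=0$. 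Concretely, $\Ex[Q^2]=1$ and $\Ex[Q^4]-3 = 3\,\Ex[(\bm X_\infty^\top\Sigma\bm X_\infty)^2] - 3(\tr\Sigma)^2 + \tfrac32\sum_s\rho_s^4 + (\text{cross terms that are manifestly}\ge 0)$; since $\Ex[(\bm X_\infty^\top\Sigma\bm X_\infty)^2]\ge (\tr\Sigma)^2$ with equality iff $\Sigma=0$ (as $\Sigma$ is PSD), and the $\chi^2$-contribution $\sum_s\rho_s^4\ge0$ vanishes iff $\bm\rho=0$, we get $\Ex[Q^4]\ge 3$ with equality iff $Q_1\equiv Q_3\equiv 0$, i.e.\ $Q\sim N(0,\rho^2)$ with $\rho^2=1-\sum_s\sigma_{ss}-\tfrac12\sum_s\rho_s^2=1$. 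Thus every subsequential limit is $N(0,1)$, which gives $S_{G_n}(\bm X_n)\stackrel{D}\to N(0,1)$.

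For the ``only if'' direction of part (1), suppose $S_{G_n}(\bm X_n)\stackrel{D}\to N(0,1)$. I claim the same holds for the truncated statistic for each fixed $M$, in fact I would argue directly that $S_{G_n}(\bm X_{n,M})\stackrel{D}\to Q^{(M)}$ along subsequences and that the only way the untruncated limit is $N(0,1)$ is if $\Sigma=0,\bm\rho=0$ along every subsequence (again by the strict-inequality computation above, now read in reverse: $N(0,1)$ is a limit only when the $Q_1,Q_3$ parts are absent). But $\Sigma$ and $\bm\rho$ are features of the graph sequence alone, not of $F$, so $\Sigma=0$ and $\bm\rho=0$ force $Q^{(M)}\sim N(0,1)$ as well, for every $M$. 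Then boundedness of the truncated variables gives moment convergence, so $\lim_{n\to\infty}\Ex[(S_{G_n}(\bm X_{n,M}))^4]=3$ for each $M$, and hence the iterated limit equals $3$. Part (2) is the special case where no truncation is needed: when $\Ex[X_1^4]<\infty$, the fourth moments $\Ex[(S_{G_n}(\bm X_n))^4]$ are bounded uniformly in $n$ (again via the subgraph-counting estimates, now with the finite constant $\Ex[X_1^4]$ in place of $M$-dependent bounds) and the family $\{S_{G_n}(\bm X_n)^2\}$ is uniformly integrable, so along any subsequence $\Ex[(S_{G_n})^4]\to\Ex[Q^4]$; the argument of part (1) then applies verbatim with $X_{u,M}$ replaced by $X_u$, removing the outer $\lim_{M\to\infty}$.

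The main obstacle I anticipate is twofold. First, establishing the uniform (in $n$) bound on $\Ex[(S_{G_n}(\bm X_n))^4]$ — and more generally ensuring that the truncated fourth moments converge to $\Ex[(Q^{(M)})^4]$ rather than merely being bounded — requires controlling expectations of products $\prod a_{u_i,v_i}$ over quadruples of edges, i.e.\ counting the relevant small subgraphs (paths, matchings, the ``bowtie'', etc.) in $G_n$ and comparing their counts to $|E(G_n)|^2$; this is precisely where one needs the Alon-type and hypergraph-removal-type estimates cited in the introduction, and getting the bookkeeping right across the six edge-classes (\texttt{hh}, \texttt{hm}, \texttt{hl}, \texttt{mm}, \texttt{ml}, \texttt{ll}) is the delicate part. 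Second, one must verify the ``if and only if'' at the level of the limit object, namely the sharp inequality $\Ex[Q^4]\ge 3$ with the exact equality case — this reduces to the PSD inequality $\Ex[(\bm X_\infty^\top\Sigma\bm X_\infty)^2]\ge(\tr\Sigma)^2$ and the observation that the cross-moments between $Q_1,Q_2,Q_3$ vanish by independence and centering, but one should be careful that the infinite sums defining $Q_1$ and $Q_3$ converge in $L^4$ (guaranteed by Assumptions \ref{asum1}, \ref{asum2} via Lemma \ref{mim} and Proposition \ref{p:lim2}) so that the fourth-moment computation is valid term by term. Everything else is routine: independence of the three components, the explicit fourth moments of a normal-variance mixture and of a weighted sum of centered $\chi^2_1$'s, and the passage between subsequential and full convergence.
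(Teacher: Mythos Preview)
Your approach is genuinely different from the paper's and has a real gap in the equality case. The paper does not compute $\Ex[Q^4]$ at all; instead it expands $\Ex[(S_{G_n}(\bm X_{n,M}))^4]$ directly at the prelimit level as a sum over multigraphs (see \eqref{eq:nneg0}):
\[
\Ex[(S_{G_n}(\bm X_{n,M}))^4]=3+6\,\vr[X_{1,M}^2]\,\frac{N(K_{1,2},G_n)}{|E(G_n)|^2}+\frac{N(C_4,G_n)}{|E(G_n)|^2}+o(1),
\]
and then splits into Rademacher versus non-Rademacher cases according to whether $\vr[X_{1,M}^2]=0$, obtaining the graph-theoretic conditions of Proposition~\ref{ppn:nrad}. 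Theorem~\ref{thm:moment4} is then read off from Proposition~\ref{ppn:nrad}.

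Your gap is precisely the equality characterization. The claim ``$\Ex[(\bm X_\infty^\top\Sigma\bm X_\infty)^2]\ge(\tr\Sigma)^2$ with equality iff $\Sigma=0$'' is false: equality holds iff $V:=\bm X_\infty^\top\Sigma\bm X_\infty$ is a.s.\ constant, and when $F$ is Rademacher with $\Sigma$ diagonal one has $V=\sum_s\sigma_{ss}X_s^2=\tr\Sigma$ identically, so $\Sigma\ne 0$ is allowed. (A correct computation gives $\Ex[Q^4]-3=3\vr(V)+3\sum_s\rho_s^4$; your coefficient $3/2$ is off but inessential.) For the ``if'' direction this is harmless: $\vr(V)=0$ and $\bm\rho=0$ already force $Q_1\sim N(0,\tr\Sigma)$, $Q_3=0$, hence $Q\sim N(0,1)$, without needing $\Sigma=0$. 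But your ``only if'' direction breaks: from $Q\sim N(0,1)$ you only get $V$ constant and $\bm\rho=0$, not $\Sigma=0$, so the sentence ``$\Sigma$ and $\bm\rho$ are features of the graph sequence alone \dots\ force $Q^{(M)}\sim N(0,1)$'' does not follow. You must show $V^{(M)}=\bm X_{\infty,M}^\top\Sigma\bm X_{\infty,M}$ is also constant. In the Rademacher case this is trivial ($X_{u,M}=X_u$ for $M\ge 1$); in the non-Rademacher case one must argue separately that $V$ constant forces $\Sigma=0$---this is exactly what the paper does in Section~\ref{sec:pfnrad1} (if $\sigma_{11}>0$ and the support of $X_1$ has at least three points, then $\sigma_{11}X_1^2+X_1 f(\bm X_{-1})+g(\bm X_{-1})$ cannot be a.s.\ constant). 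So your route is repairable, but only by reinstating the Rademacher/non-Rademacher split that the paper's direct fourth-moment expansion handles automatically via the factor $\vr[X_{1,M}^2]$.
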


The above result shows that the asymptotical normality of the random quadratic form $S_{G_n}(\bm{X}_n)$ is characterized by a {\it fourth-moment phenomenon}. In particular, if  $F$ has finite fourth moment, then  $S_{G_n}(\bm{X}_n)$ is asymptotically standard Gaussian if and only if its fourth-moment converges to 3 (the fourth moment of the standard Gaussian). Furthermore, if $F$ does not have finite fourth moment, then the limiting distribution of $S_{G_n}(\bm{X}_n)$ is asymptotically Gaussian if and only if the fourth-moment of the quadratic form evaluated on the truncated variables $\{X_{u, M}\}_{1 \leq u \leq N}$ converges to 3, that is, the asymptotical normality is characterized by a {\it truncated fourth-moment phenomenon}.

\begin{remark}\label{remark:4moments}
	The fourth moment phenomenon was first discovered by Nualart and Peccati \cite{NuPe05}, who showed that the convergence of the first, second, and fourth moments to $0, 1$, and $3$, respectively, guarantees asymptotic normality for a sequence of multiple stochastic Wiener-It\^o integrals of fixed order. 
	Later, Nourdin and Peccati \cite{NoPe09,moment4_np} provided error bounds for the fourth moment theorem of \cite{NuPe05}. Thereafter, this emerged as a 
	ubiquitous principle governing the central limit theorems for various non-linear functionals of random fields.  
	We refer the reader to the book \cite{book_np} for an introduction to the topic and  website \url{https://sites.google.com/site/malliavinstein/home} for a list of the recent results. 
	Related results for degenerate $U$-statistics of a fixed order were first obtained by de Jong \cite{dejong87,de90}. Here, in addition to the fourth moment condition, in general, an extra condition is  needed to control the maximum influence of the underlying independent random variables (cf.~\cite[Theorem 2.1]{dejong87}, \cite[Theorem 1]{de90}, and also \cite[Theorem 1.6]{DoKr19}). 
	For other classical sufficient conditions for asymptotic normality and rates of convergence of quadratic forms see \cite{chatterjee_normal_approximation,gotze,gotze_applications,hall,rotar} and the references therein. 
\end{remark}

Adapting the aforementioned results to the specific case of the quadratic chaos (for general symmetric matrices $((a_{u, v}))_{1 \leq u, v \leq n}$), implies that the fourth-moment phenomenon is sufficient for asymptotic normality when $F$ is such that $\Ex_{X \sim F}[X^4]\geq 3$ \cite{greater_three} and $F$ is the Rademacher distribution  \cite{rademacher}.   In Theorem \ref{thm:moment4} we provide a complete characterization of the asymptotic normality of $F$-quadratic chaos when $((a_{u, v}))_{1 \leq u, v \leq n}$ is the adjacency matrix of a graph. In particular, Theorem \ref{thm:moment4}~(2) shows that the convergence of the fourth-moment characterizes the asymptotic normality of $S_{G_n}(\bm{X}_n)$ whenever $F$ has finite fourth-moment. This means that the fourth-moment phenomenon holds even in the intermediate regime $1 < \Ex_{X \sim F}[X^4] < 3$, which, to the best of our knowledge, is not covered by previous results. Our result also includes the case where $F$ does not have finite fourth-moment, where the asymptotic normality is characterized by a truncated fourth moment phenomenon (Theorem \ref{thm:moment4}~{(1)}). 

Theorem \ref{thm:moment4} is a consequence of the following proposition,  which we prove in Section \ref{sec:pfmoment4}. In addition to establishing the fourth-moment phenomenon, this proposition provides a structural characterization of graphs which satisfy the fourth-moment condition. Interestingly, the characterization depends on whether or not the common distribution $F$ of $(X_1, X_2, \ldots, X_n)$ is Rademacher. 

\begin{proposition}\label{ppn:nrad} Let  $\bm{X}_n=(X_1,X_2,\ldots,X_n)^\top$ be i.i.d.~mean $0$ and variance $1$ random variables with common distribution function $F$ and consider a sequence of graphs $\{G_n\}_{n \geq 1}$ with $G_n \in \mathscr{G}_n$. Then the following  hold: 
	
	\begin{itemize}
		
		\item[$(1)$]  If $F$ is not the Rademacher distribution, then the following are equivalent: 
		
		\begin{enumerate}[label=(\alph*)]
			\item \label{ba} For  
			$|\lambda_{1,n}|\ge |\lambda_{2,n}|\ge  \cdots \geq |\lambda_{n,n}|$ the eigenvalues of the adjacency matrix $A(G_n)$, 
			\begin{align}\label{eq:spectralF}
				\lim_{n \rightarrow \infty}  \frac{1}{\sqrt{\eg}} \max_{1\le u \le n} |\lambda_{u,n}| = 0 . 
			\end{align} 
			\item \label{eq:4m} $\lim_{M\to \infty} \lim_{n\to\infty} \Ex[(S_{G_n}(\bm{X}_{n, M}))^4] = 3$, where $\bm{X}_{n}^{(M)}$ is as defined in \eqref{def:trunc}. 
			
			\item \label{bc} $S_{G_n}(\bm{X}_n) \stackrel{D} \to N(0,1)$. 
		\end{enumerate}  
		Moreover, if $\Ex[X_1^4]<\infty$, then the above conditions are equivalent to $\lim_{n\to\infty} \Ex[(S_{G_n}(\bm{X}_{n}))^4] = 3$.

		\item[$(2)$]  If $F$ is the Rademacher distribution, then the following are equivalent: 
		\begin{enumerate}[label=(\alph*)]
			\item \label{bar} For $C_4$ denoting the 4-cycle and $N(C_4,G_n)$  the number of copies of $C_4$ in $G_n$, 
			\begin{align}\label{eq:C4}
				\lim_{n \rightarrow \infty} \frac{1}{\eg^{2}} N(C_4,G_n) = 0.
			\end{align}
			
			\item \label{eq:4mr} $ \lim_{n\to\infty} \Ex[(S_{G_n}(\bm{X}_n))^4] = 3$. 
			\item \label{bcr} $S_{G_n}(\bm{X}_n) \stackrel{D}{\to} N(0,1)$.
		\end{enumerate}

	\end{itemize} 
	
\end{proposition}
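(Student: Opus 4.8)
The plan is to reduce both parts to one explicit formula for the (truncated) fourth moment, and then route the distributional statements through Theorems~\ref{thm:main} and~\ref{thm:converse}. Write $m:=\eg$ and let $A=A(G_n)$ with degrees $d_1,\dots,d_n$. For any $\bm Y=(Y_1,\dots,Y_n)$ i.i.d.\ with $\Ex Y_1=0$, $\Ex Y_1^2=1$, $\Ex Y_1^4<\infty$ (in particular $\bm Y=\bm X_{n,M}$, $\bm Y=\bm X_n$ when $\Ex X_1^4<\infty$, and $\bm Y$ Rademacher), expanding $S_{G_n}(\bm Y)^4=(4m)^{-2}(\bm Y^\top A\bm Y)^4$ and grouping the eight index‑slots by the partition of $\{1,\dots,8\}$ they induce, I expect to get, with $\mu_3=\Ex Y_1^3$, $\mu_4=\Ex Y_1^4$,
\[
\Ex\big[S_{G_n}(\bm Y)^4\big]\;=\;3\;+\;3(\mu_4-1)\,\frac{\sum_v d_v^2}{m^2}\;+\;24\,\frac{N(C_4,G_n)}{m^2}\;+\;O_{\bm Y}\!\big(m^{-1/2}\big).
\]
Only the partition $\{2,2,2,2\}$ (producing the leading $3$ and the $C_4$‑count) and $\{2,2,4\}$ (producing the $\mu_4$‑weighted $\sum_v d_v^2$) are of order $m^2$; the partition $\{2,3,3\}$ gives a term $c\,\mu_3^2\,N(K_3,G_n)/m^2$ which is $O(\mu_3^2 m^{-1/2})$ since $N(K_3,G_n)=\tfrac16\tr A^3\le\tfrac16\|A\|_{\mathrm{op}}\tr A^2\le\tfrac16(2m)^{3/2}$, and the rest ($\{4,4\}$, etc.) contributes $O(m^{-1})$. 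The needed identities are $\tr A^2=2m$, $\sum_v d_v^2=\mathbf{1}^\top A^2\mathbf{1}$, and $\tr A^4=2\sum_v d_v^2+8N(C_4,G_n)-2m$; the constants $3(\mu_4-1)$ and $24$ I would pin down by evaluating the identity on $K_{1,n-1}$ (limit $\Ex[(X_1 Z)^4]=3\mu_4$) and on $K_n$ (limit $\Ex[(\tfrac1{\sqrt2}(\chi_1^2-1))^4]=15$).

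Granting the displayed identity, the ``moment $\Leftrightarrow$ moment'' equivalences are linear algebra. Since $\sum_v d_v^2\le(2m)^2$ and $8N(C_4,G_n)\le\tr A^4+2m\le 2m\|A\|_{\mathrm{op}}^2+2m\le 4m^2+2m$, and since $\|A\|_{\mathrm{op}}^4\le\tr A^4\le 2m\|A\|_{\mathrm{op}}^2$, condition~\eqref{eq:spectralF} is equivalent to $\tr A^4=o(m^2)$, hence---both summands being nonnegative---to ``$\sum_v d_v^2=o(m^2)$ and $N(C_4,G_n)=o(m^2)$''. For non‑Rademacher $F$, $\mu_4(M):=\Ex[X_{1,M}^4]\to\Ex[X_1^4]>1$, so applying the identity to $\bm X_{n,M}$ gives \ref{ba}$\Rightarrow$\ref{eq:4m} at once; and if \ref{ba} fails, along a subsequence $(\sum_v d_v^2/m^2,\,N(C_4,G_n)/m^2)\to(L_1,L_2)\neq(0,0)$, so $\lim_M\lim_n\Ex[S_{G_n}(\bm X_{n,M})^4]=3+\lim_M3(\mu_4(M)-1)L_1+24L_2>3$ (this also covers $\Ex X_1^4=\infty$, which forces $L_1=0$, then $L_2=0$), violating \ref{eq:4m}. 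The ``Moreover'' clause is the same argument with $\bm X_n$ replacing $\bm X_{n,M}$. In the Rademacher case $\mu_4=1$, $\mu_3=0$, so the identity reads $\Ex[S_{G_n}(\bm X_n)^4]=3+24N(C_4,G_n)/m^2+O(m^{-1/2})$, and \ref{eq:4mr}$\Leftrightarrow$\eqref{eq:C4} is immediate.

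For the distributional implications: given any subsequence, the codegree ratios in~\eqref{as:point} lie in $[0,1]$ and the scaled eigenvalues in~\eqref{as:eigen} lie in $[0,\sqrt2]$ and are monotone in $K$ by Cauchy interlacing, so (as in the proof of Theorem~\ref{thm:converse}) a diagonal argument extracts a further subsequence along which Assumptions~\ref{asum1} and~\ref{asum2} hold, with some $\Sigma,\bm\rho$. If \eqref{eq:spectralF} holds then $\sigma_{st}=\lim_n(A^2)_{st}/m=0$ (as $|(A^2)_{st}|\le\|A\|_{\mathrm{op}}^2$) and $\rho_s=0$ (as $A_{n,K}$ is a principal submatrix of $A$), so Theorem~\ref{thm:main} gives $S_{G_n}(\bm X_n)\to Q_1+Q_2+Q_3=0+N(0,1)+0$, along every subsequence; this proves \ref{ba}$\Rightarrow$\ref{bc} and \ref{bar}$\Rightarrow$\ref{bcr}. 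Conversely, in part~(1): if \ref{bc} holds but \ref{ba} fails, take a subsequence with $\|A\|_{\mathrm{op}}/\sqrt m\to\delta\in(0,\sqrt2]$ and the assumptions holding; then $S_{G_n}(\bm X_n)\to Q_1+Q_2+Q_3\stackrel{D}{=}N(0,1)$, and equating fourth cumulants (all finite, since $\Ex Q_i^4\le\Ex Q^4<\infty$ by independence and conditional Jensen) gives $0=3\,\vr[\bm X_\infty^\top\Sigma\bm X_\infty]+3\sum_s\rho_s^4$, so $\bm\rho=0$ and $\bm X_\infty^\top\Sigma\bm X_\infty$ is a.s.\ constant; since $F$ is not Rademacher, $\Ex X_1^4>1$ strictly, so the ANOVA components (squared $L^2$‑norms $\sum_s\sigma_{ss}^2(\Ex X_1^4-1)$ and $4\sum_{s<t}\sigma_{st}^2$) both vanish, i.e.\ $\Sigma=0$; finally, blocking $A$ by the top‑$K$ vertices gives $\|A\|_{\mathrm{op}}\le K+(\sum_{s\le K}d_s)^{1/2}+\|A_{n,K}\|_{\mathrm{op}}$, and since $\sigma_{ss}=0$ forces $d_s/m\to0$ while $\bm\rho=0$ forces $\lim_K\lim_n\|A_{n,K}\|_{\mathrm{op}}/\sqrt m=0$, taking $\limsup_n$ then $K\to\infty$ yields $\|A\|_{\mathrm{op}}=o(\sqrt m)$, contradicting $\delta>0$. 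In part~(2) this last translation fails (e.g.\ $K_{1,n-1}$ has $N(C_4,G_n)=0$ but $\|A\|_{\mathrm{op}}=\sqrt m$), so for \ref{bcr}$\Rightarrow$\ref{eq:4mr} I instead use hypercontractivity: $S_{G_n}(\bm X_n)$ is a degree‑$2$ multilinear form in i.i.d.\ Rademacher variables, so $\|S_{G_n}(\bm X_n)\|_q\le(q-1)\|S_{G_n}(\bm X_n)\|_2=q-1$ for all $q\ge2$; hence $\{S_{G_n}(\bm X_n)^4\}_n$ is uniformly integrable and \ref{bcr} gives $\Ex[S_{G_n}(\bm X_n)^4]\to3$. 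Together with \ref{ba}$\Leftrightarrow$\ref{eq:4m} (resp.\ \ref{bar}$\Leftrightarrow$\ref{eq:4mr}) this closes both cycles.

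The main obstacle is the combinatorial bookkeeping behind the displayed fourth‑moment identity: one must enumerate the eight‑slot index partitions of $\Ex[(\bm Y^\top A\bm Y)^4]$, verify that exactly $\{2,2,2,2\}$ and $\{2,2,4\}$ are of order $m^2$, and extract the constants $3(\mu_4-1)$ and $24$---the safest check being to specialize to $K_{1,n-1}$ and $K_n$, whose limits are already recorded in the introduction. A secondary delicate point is the ``reverse translation'' $\Sigma=0,\ \bm\rho=0\Rightarrow\|A(G_n)\|_{\mathrm{op}}=o(\sqrt m)$ in part~(1), which crucially uses $\sigma_{ss}=0$ and has no analogue in the Rademacher case, forcing the detour through hypercontractivity there.
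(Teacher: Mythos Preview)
Your overall strategy---the explicit fourth-moment expansion and the routing through Theorems~\ref{thm:main} and~\ref{thm:converse}---matches the paper's, and most pieces are correct. There is, however, a genuine gap in the Rademacher case: you have not proved \ref{bar}$\Rightarrow$\ref{bcr}.

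You write that the argument ``\eqref{eq:spectralF} $\Rightarrow\Sigma=0,\bm\rho=0\Rightarrow$ limit $N(0,1)$'' proves both \ref{ba}$\Rightarrow$\ref{bc} and \ref{bar}$\Rightarrow$\ref{bcr}. But \ref{bar} is $N(C_4,G_n)=o(m^2)$, which is \emph{strictly weaker} than \eqref{eq:spectralF}; your own example $K_{1,n-1}$ has $N(C_4)=0$ yet $\|A\|_{\mathrm{op}}=\sqrt m$ and $\sigma_{11}=1$. So under \ref{bar} alone the diagonal entries $\sigma_{ss}=\lim_n d_s/m$ need not vanish, and $Q_1$ is not automatically zero. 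The paper fills this gap by a Rademacher-specific argument: from \ref{bar} one gets $\sigma_{st}=0$ for $s\ne t$ (since $\max_{s\ne t}(A^2)_{st}^2\lesssim N(C_4,G_n)+m$) and $\rho_s=0$ for all $s$ (since $\tr A_{n,K}^4\lesssim m+N(K_{1,2},G_{n,K})+N(C_4,G_n)\le m+m^2/K+o(m^2)$, using $d_u\le 2m/K$ for $u>K$); then, crucially, $X_s^2\equiv1$ for Rademacher makes $\bm X_\infty^\top\Sigma\bm X_\infty=\sum_s\sigma_{ss}$ a \emph{deterministic constant}, so $Q_1\sim N(0,\sum_s\sigma_{ss})$, $Q_2\sim N(0,1-\sum_s\sigma_{ss})$, $Q_3=0$, and $Q_1+Q_2+Q_3\sim N(0,1)$. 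This last step has no counterpart in your writeup.

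Two smaller remarks. In your \ref{bc}$\Rightarrow$\ref{ba} for part~(1), the ANOVA identity $\vr(V)=(\mu_4-1)\sum_s\sigma_{ss}^2+4\sum_{s<t}\sigma_{st}^2$ presupposes $\mu_4<\infty$; when $\mu_4=\infty$ it is cleaner to condition, using $\Ex[V\mid X_1]=\sigma_{11}X_1^2+\sum_{s\ge2}\sigma_{ss}$, which forces $X_1^2$ constant (hence Rademacher) unless $\sigma_{11}=0$. Also, the paper closes the part~(1) cycle via \ref{bc}$\Rightarrow$\ref{eq:4m} directly (show $\Sigma=0$, deduce $S_{G_n}(\bm X_{n,M})\to N(0,1)$, then invoke uniform integrability from the mgf bound), which sidesteps your ``reverse translation'' $\Sigma=0,\bm\rho=0\Rightarrow\|A\|_{\mathrm{op}}=o(\sqrt m)$ entirely; your route is valid but longer.
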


Proposition \ref{ppn:nrad} provides a useful way to verify the fourth-moment condition in examples. Incidentally, the fact that the 4-cycle condition \eqref{eq:C4} characterizes Gaussianity for the quadratic Rademacher chaos (Proposition \ref{ppn:nrad}~(2)) also follows from \cite[Theorem 1.3]{bbb_pd_sm}, where the statistic $S_{G_n}$ was studied in the context of graph coloring problems. 
However, when $F$ is not the Rademacher distribution, the asymptotic normality of $S_{G_n}$ is characterized by the spectral condition \eqref{eq:spectralF} instead. To prove Proposition \ref{ppn:nrad} we express the fourth moment of $S_{G_n}$ as a linear combination of the counts of the different multi-graphs with 4 edges (see Figure \ref{fig:momentgraphs}). Then it can be observed that when $F$ is not the Rademacher distribution both the 2-star and the 4-cycle  counts contribute to the leading order of the fourth-moment difference $\Ex[(S_{G_n}(\bm{X}_{n, M}))^4]-3$. This, in turn, can be expressed in terms of the sum of the fourth-powers of the eigenvalues of $G_n$ (see \eqref{eq:4con}), which leads to the spectral condition in \eqref{eq:spectralF}. On the other hand, if $F$ {is the} Rademacher distribution, the coefficient corresponding to the 2-star count vanishes (since $\vr[X_1^2]=0$ in \eqref{eq:nneg0}) and the leading order of the fourth-moment difference $\Ex[(S_{G_n}(\bm{X}_{n}))^4]-3$ is determined solely by the number of 4-cycles.  

\subsection{Asymptotic Notation} \label{sec:asymptotic_notation} Throughout we will use the following standard asymptotic notations. For two positive sequences $\{a_n\}_{n\geq 1}$ and $\{b_n\}_{n\geq 1}$, $a_n  \lesssim b_n$ means $a_n \leq C_1 b_n$, $a_n  \gtrsim b_n$ means $a_n \geq C_2 b_n$, and $a_n \asymp b_n$ means $C_2 b_n \leq a_n \leq C_1 b_n$, for all $n$ large enough and positive constants $C_1, C_2$. Moreover, subscripts in the above notation,  for example $\lesssim_\square$ or $\gtrsim_\square$,  denote that the hidden constants may depend on the subscripted parameters. Finally, for a sequence of random variables $\{X_n\}_{n \geq 1}$ and a positive sequence $\{a_n\}_{n \geq 1}$, the notation $X_n = O_P(a_n)$ means $X_n / a_n$ is stochastically bounded, that is, $\lim_{M \rightarrow \infty} \lim_{n \rightarrow \infty}\Pr( | X_n / a_n | \leq M ) = 1$, and $X_n = o_P(a_n)$ will mean $\lim_{n \rightarrow \infty}\Pr( | X_n / a_n | \geq \varepsilon ) = 0$, for every $\varepsilon > 0$.

\subsection*{Organization} The rest of the paper is organized as follows. In Section \ref{sec:example} we compute the limiting distribution in various examples. The proofs of Theorem \ref{thm:main} and Theorem \ref{thm:converse} are given in Section \ref{sec:pfquadratic}. Theorem \ref{thm:moment4} and Proposition \ref{ppn:nrad} are proved in Section \ref{sec:pfmoment4}. The universality of the limiting distribution is discussed in Section \ref{sec:universality}. \rev{In Section \ref{sec:open}, we discuss open problems and directions for future research.} A few technical lemmas are proved in Appendix \ref{sec:appendix}. 

\section{Examples} \label{sec:example}

In this section we apply Theorem \ref{thm:main} to obtain limiting distribution of $S_{G_n}$ for various graph ensembles. Recall that the precise condition on $G_n$ under which $S_{G_n}$ is asymptotically normal is given in Proposition \ref{ppn:nrad}. Consequently, in this section we will primarily focus on the situations where the limit of $S_{G_n}$ has a non-normal component.

\begin{example}[Dense Graphs] 
	
	In \eqref{eq:Kn} we constructed an example where the limit $S_{G_n}$ converges to a centered $\chi^2_1$ distribution. Note that in this example $a_{u, v} = 1$ for all $1 \leq u \ne v \leq N$, that is, $G_n$ is the complete graph on $n$ vertices. This phenomenon extends to any converging sequence of dense graphs. In order to  explain this we briefly recall the basic definitions about the convergence of graph sequences (see \cite{lov} for a detailed exposition). 
	
	For two graphs $F$ and $G$, define the homomorphism density of $F$ into $G$ as 
	$$t(F,G) :=\frac{|\hom(F,G)|}{|V (G)|^{|V (F)|},}$$
	where  $|\hom(F,G)|$ denotes the number of homomorphisms of $F$ into $G$. 
	A {\it graphon} is a measurable function from $[0, 1]^2$ into $[0, 1]$ that is symmetric $W(x, y) = W(y,x)$, for all $x, y \in [0, 1]$. This is the continuum  analogue of graphs, and we denote the space of all graphons by $\mathscr W$. A finite simple graph $G$ on the vertex set $\{1, 2, \ldots, n\}$ can also be represented as a graphon in a natural way: Define $W_G(x, y) :=\boldsymbol 1\{(\ceil{nx}, \ceil{ny})\in E(G)\}$, that is, partition $[0, 1]^2$ into $n^2$ squares of side length $1/n$, and define $W_G(x, y)=1$ in the $(u, v)$-th square if $(u, v)\in E(G)$ and 0 otherwise. The graphon $W_G$ will be referred to as the {\it empirical graphon} corresponding to the graph $G$. For a simple graph $F$ with $V (F)= \{1, 2, \ldots, |V(F)|\}$ and a graphon $W$, define 
	$$t(F,W) =\int_{[0,1]^{|V(F)|}}\prod_{(u,v)\in E(F)} W(x_u,x_v) \mathrm dx_1\mathrm dx_2\cdots \mathrm dx_{|V(F)|}.$$ The fundamental definition of graph limit theory \cite{graph_limits_I,graph_limits_II,lov} asserts that a sequence of graphs $\{G_n\}_{n\geq 1}$ {\it converge to a graphon $W \in \mathscr W$} if for every finite simple graph $F$, 
	\begin{equation}\label{eq:graph_limit}
		\lim_{n\rightarrow \infty} t(F, G_n) = t(F, W).
	\end{equation} 
	Furthermore, every function $W\in \mathscr W$ defines an operator $T_W: L^2[0, 1]\rightarrow L^2[0, 1]$, by 
	\begin{eqnarray}
		(T_Wf)(x)=\int_0^1W(x, y)f(y)\mathrm dy.
		\label{eq:TW}
	\end{eqnarray}
	$T_W$ is a Hilbert-Schmidt operator, which is compact and has a discrete spectrum, that is, a countable multiset of non-zero real eigenvalues $\{\lambda_s(W)\}_{s \in \mathbb N}$ (see \cite[Section 7.5]{lov}). In particular, every non-zero eigenvalue has finite multiplicity and $
	\sum_{s=1}^\infty\lambda_s^2(W) = \int_{[0, 1]^2}W(x, y)^2\mathrm dx\mathrm dy$.

	Suppose $\{G_n\}_{n \geq 1}$, with $G_n = (V(G_n), E(G_n))$ and $G_n \in \mathscr{G}_n$, is a sequence of graphs converging to a graphon $W$ as in \eqref{eq:graph_limit} such that $t(K_2, W) = \int_{[0, 1]^2} W(x, y) \mathrm dx \mathrm dy  > 0$. Note that 
	$$\frac{1}{\eg} \sum_{v=1}^n a_{s, v}a_{v, t} \leq \frac{n}{|E(G_n)|} \to 0,$$ since $t(K_2, G_n)=\frac{2 \eg}{n^2}\to t(K_2,W)>0$ implies $\eg \gtrsim n^2$.  Thus, $G_n$ satisfies \eqref{as:point} with $\sigma_{st}=0$ for all $s, t$. Next, observe that for any fixed $K$, if $G_n\backslash G_{n, K}$ denotes the graph obtained by removing the edges in $G_{n, K}$  from $G_n$ (recall the definition of $G_{n, K}$ from Assumption \ref{asum2}), then 
	$\frac{1}{n^2}|E(G_n\backslash G_{n, K})| \leq \frac{K }{n}.$
	This implies, as $n \rightarrow \infty$, 
	$$\int_{[0, 1]^2}|W_{G_n}(x, y) - W_{G_{n, K}} (x, y) | \mathrm d x \mathrm d y \rightarrow 0,$$
	that is, the $L^1$ distance between the empirical graphons $W_{G_{n, K}}$ and $W_{G_n}$ converges to zero, for any fixed $K$. Hence, by \cite[Equation (8.14)]{lov}, for any fixed $K$ the truncated graph $G_{n,K}$ also converges to the graphon $W$.   Then, recalling that $|\lambda_{n, K}^{(1)}| \ge |\lambda_{n, K}^{(2)}| \ge \cdots \geq |\lambda_{n, K}^{(n-K)}|$ are the eigenvalues of the adjacency matrix of $G_{n, K}$, by \cite[Theorem 11.53]{lov} $\frac1{n}\lambda_{n,K}^{(s)} \to \lambda_s(W)$,   
	for every $s \geq 1$ fixed, where $\{\lambda_s(W)\}_{s \geq 1}$ are the eigenvalues (ordered according to non-increasing absolute values) of the operator $T_W$ as defined in \eqref{eq:TW}. 
	Thus, \eqref{as:eigen} holds with $$\rho_s=\sqrt{\frac{2}{\int_{[0, 1]^2} W(x, y) \mathrm dx \mathrm dy}}\lambda_s(W) . $$
	Thus limiting distribution $Q$ in \eqref{eq:main} is of the form
	\begin{align}\label{graphon}
		Q \sim N\left(0,1-\frac{\int_{[0, 1]^2} W(x, y)^2 \mathrm dx \mathrm dy}{\int_{[0, 1]^2} W(x, y) \mathrm dx \mathrm dy}\right)+\sum_{s=1}^{\infty} \frac1{\sqrt{2\int_{[0, 1]^2} W(x, y) \mathrm dx \mathrm dy}}\lambda_s(W) \cdot Y_s,
	\end{align}
	where $\{Y_s\}_{s \geq 1}$ are a collection of independent $\chi_1^2 - 1$ random variables independent of the normal random variable. In the following we compute the limit in \eqref{graphon} for a few specific choices of $W$. 
	
	\begin{itemize}
		
		\item  {\it Dense  Erd\H os-R\'enyi Random Graphs}: The Erd\H os-R\'enyi random graph $\mathcal{G}(n,p)$ is a random graph on $n$ vertices where each edge is present independently with probability $p = p(n) \in (0,1]$. When $p \in(0, 1]$ is fixed, then $G_n \sim \mathcal{G}(n,p)$ converges in probability to the graphon $W_p \equiv p$, which is the constant function $p$. In this case, $\lambda_1(W_p) = p$ is the only non-zero eigenvalue of $W_p$ and 
		\begin{align}\label{er2}
			Q\sim N(0,1-p)+\sqrt{\tfrac{p}{2}} (\chi_1^2-1),
		\end{align}
		where the $\chi_1^2-1$ variable is independent of the $N(0,1-p)$ variable. In particular, if $p=1$, which corresponds to the complete graph $K_n$, the normal component in \eqref{er2} is degenerate and $Q \sim \frac{1}{\sqrt 2} (\chi_1^2-1)$ (which recovers the limit in \eqref{eq:Kn}).

		\item  {\it Stochastic Block Models}: Consider the graphon corresponding to the 2-block stochastic block model with equal block sizes, such that the  within-block probability is $p$ and the across-block probability is $q$:	
		$$W(x,y)=\begin{cases}p & x , y \in [0, \frac{1}{2}]^2 \cup [\frac{1}{2}, 1]^2 , \\ q & \mbox{otherwise}. \end{cases}$$
		Since  $\frac{p+q}{2}$ and $\frac{p-q}2$ are the only non-zero eigenvalues of $W$, following \eqref{graphon}, the limiting distribution is given by
		\begin{align*}
			Q \sim N\left(0,1-\tfrac{p^2+q^2}{p+q}\right)+ \tfrac{\sqrt{p+q}}{2} \cdot Y_1+\tfrac{p-q}{2\sqrt{p+q}} \cdot Y_2, 
		\end{align*} 
		where $Y_1, Y_2$ are independent $\chi^2_1-1$ random variables which are independent of the normal component. 
	\end{itemize}
\end{example}


Next, we consider the case where the limit is a normal variance mixture. This arises in the limit when the graph $G_n$ has a few `high' degree vertices. 
Towards this we consider the complete bipartite graph. 

\begin{example}[Complete bipartite graphs] Suppose $G_n=K_{a, n}$ is the complete bipartite graph with vertex set $V(K_{a, n}) = A \cup B$ such that $|A|=a$ and $|B|=n$. In this case, the limiting distribution of $S_{G_n}$ depends on whether whether $a$ is fixed or increasing with $n$. 
	
	\begin{itemize}
		
		\item {\it Suppose $a$ is fixed.} Note that $|E(K_{a, n})|=a n$ and $d_{s, t}:=\sum_{v=1}^n a_{s, v} a_{v, t} = n$, for $s, t \in A$ and $d_{s, t} = a$, for $s, t \in B$. Hence, \eqref{as:point} holds with $\sigma_{st}=\frac{1}{a}$, for $1 \leq s, t \le a$ and $0$ otherwise. Moreover, when $K>a$, then the graph $G_{n, K}$ is empty. Hence, \eqref{as:eigen} holds trivially with $\rho_s=0$, for all $s \geq 1$. Thus, the limiting distribution $Q$ in \eqref{eq:main} is 	\begin{align*}
			Q \sim N\left(0,\frac1 a\left(\sum_{s=1}^a X_s\right)^2\right), 
		\end{align*}
		which is normal variance mixture. Note that the above distribution is exactly a normal, that is, the variance $\frac1 a (\sum_{s=1}^a X_s)^2$ is a constant almost surely, only when $a=1$ and $X_1, X_2, \ldots, X_n$ are i.i.d. Rademacher random variables. This corresponds to choosing $G_n=K_{1, n}$ (the $n$-star) and $F$ the Rademacher distribution. 
		
		\item {\it Suppose $a=a(n) \to \infty$, as $n\to \infty$.} In this case, \eqref{as:point} holds with $\sigma_{st}=0$, for all $s, t$. Moreover, removing the highest $K$ degree vertices from $G_n=K_{a,n}$ gives $G_{n, K}=K_{a-K,n}$. It is well known that the adjacency matrix of the complete bipartite $K_{p, q}$, for $p, q \geq 1$, has only two non-zero eigenvalues $\sqrt{pq}$ and $-\sqrt{pq}$ (see \cite[Section 1.4.2]{eigenvalues_book}). Hence, \eqref{as:eigen} holds with $\rho_1=1$, $\rho_2=-1$ and $\rho_s=0$ for $s \ge 3$. 
		Thus, in this case the limiting distribution $Q$ in \eqref{eq:main} is 
		\begin{align*}
			Q \sim \tfrac12Y_1-\tfrac12Y_2,
		\end{align*}
		where $Y_1$ and $Y_2$ are independent $\chi_1^2 - 1$ random variables. 
	\end{itemize}
\end{example}

Next, we consider the case of the sparse Erd\H os-R\'enyi random graph where the limit turns out to be normal. 

\begin{example}[Sparse Erd\H os-R\'enyi random graphs] \label{errg} Consider the Erd\H os-R\'enyi graph $G_n \sim \mathcal{G}(n, p)$, where $p = p(n) \rightarrow 0$ such that $n^2p \to \infty$. This ensures $\Ex[\eg] \to \infty$ and, consequently $\eg=(1+o_P(1)) \Ex[\eg]$, which ensures that the convergence in \eqref{eq:dense} holds in probability. Let us now verify Assumptions \ref{asum1} and \ref{asum2} for $G_n$. 
	Towards this we claim that
	$$d_{\mathrm{max}} := \max_{1 \leq v \leq n} d_{v} = o(n^2 p)$$ in the regime $p = p(n) \rightarrow 0$ such that $n^2p \to \infty$. Clearly when $np\to \infty$, as $d_{\mathrm{max}} \le n$, we have $d_{\mathrm{max}} =o(n^2p)$. On the other hand, when $np=o(\log n)$ using \cite[Lemma 2.2 (a) and (b)]{BBG} gives $d_{\mathrm{max}} =o(n^2p)$ in this case as well. This implies,
	${\frac{d_{\mathrm{max}}}{\eg} \stackrel{P}{\to} 0}$, and consequently $\sigma_{st}=0$ for all $s, t$. Now, to verify \eqref{as:eigen} let $|\lambda_{n}^{(1)}| \ge |\lambda_{n}^{(2)}| \ge \cdots \ge |\lambda_{n}^{(n)}|$ be the eigenvalues of $G_{n}$.  Then using the eigenvalue interlacing theorem \cite[Corollary 2.5.2]{eigenvalues_book} and the leading order of the maximum eigenvalue in an Erd\H os-R\'enyi graph (see \cite[Theorem 1.1]{ben}), it follows that 
	$$\frac{\max_{1\le s\le n}|\lambda_{n, K}^{(s)}|}{\sqrt{\eg}}   \leq \frac{\max_{1 \leq s \leq n} |\lambda_{n}^{(s)} |}{\sqrt{\eg}} = (1+o_P(1)) \frac{\max\{\sqrt{d_{\max}},np\}}{\sqrt{\eg}}	 \rightarrow 0,$$ 
	since $\frac{d_{\max}}{\eg}\stackrel{P}{\to} 0$ and $\frac{np}{\sqrt{\eg}}\stackrel{P}{\to} 0$ (as $p\to 0$). This shows that \eqref{as:eigen} holds with $\rho_s=0$ for all $s \geq 1$. Thus, by Theorem \ref{thm:main} $$S_{G_n}(\bm{X}_n) \stackrel{D} \rightarrow Q\sim N(0, 1).$$ 	
\end{example}

Next, by combining the three examples above, we can construct a graph where all the three components $Q_1, Q_2,$ and $Q_3$ in \eqref{eq:main} are non-trivial. 

\begin{example}{(Coexistence)} \label{ex3} Fix $p\in (0,1)$. Let $G_n$ be a graph with $n^2+1$ vertices labeled $\{1, 2, \ldots, n^2+1\}$ as follows (see Figure \ref{fig:example123}): 
	
	\begin{itemize} 
		
		\item The vertex labeled 1 is connected to all the other $n^2$ vertices. 
		
		\item On the vertices $2, \ldots, n+1$ we have a realization of the Erd\H os-R\'enyi random graph $\mathcal G(n, \frac{1}{2})$. 
		
		\item On the remaining vertices $\{n+2, \ldots, n^2+1\}$ there is a realization of the Erd\H os-R\'enyi random graph $\mathcal{G}(n^2-n, \frac{1}{n^2})$. 
		
	\end{itemize} 
	\begin{figure}
		\centering
		\includegraphics[scale=0.7]{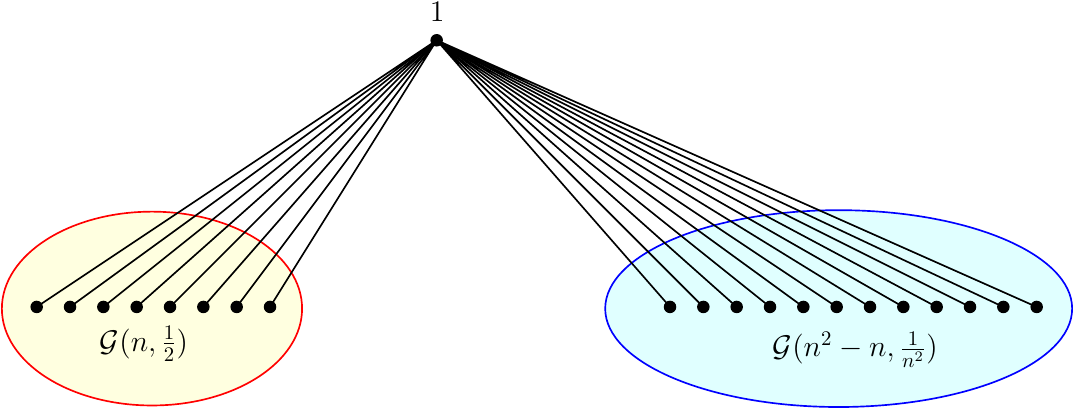}
		\caption{\small{Illustration for Example \ref{ex3}.}}
		\label{fig:example123}
	\end{figure} 
	{We now sketch a proof of the coexistence omitting some of the details for the sake of brevity.} Note that $\eg=\frac74n^2(1+o_P(1))$. In this case, \eqref{as:point} holds with $\sigma_{11}=\frac{4}{7}$ and $\sigma_{st}=0$ for $(s, t)\neq (1,1)$. To check Assumption \ref{asum2}, we remove the vertices $\{1, 2, \ldots, K\}$ from $G_n$ to obtain $G_{n,K}$. Note that after vertex $1$, with high probability, the next $K-1$ top vertices are the $K-1$ top vertices of $\mathcal{G}(n,\frac12)$. Let $H_{n,K}$ be the graph obtained by removing the top $K-1$ vertices from $\mathcal{G}(n,\frac12)$. Thus with high probability, $G_{n,K}$ is a disjoint union of two subgraphs which are isomorphic to  $\mathcal{G}(n^2-n, \frac{1}{n^2})$ and $H_{n,K}$. By \cite[Corollary 1.2]{ben}, the maximum eigenvalue $\lambda_{\mathrm{max}}(\mathcal{G}(n^2-n, \frac{1}{n^2}))= o_P(n)$.  Note that $\lambda_{\mathrm{max}}(\mathcal{G}(n, \frac{1}{2} )) = (1+o_P(1)) \frac{1}{2} n$ and the other eigenvalues of $\mathcal{G}(n, \frac{1}{2} )$ are $o_P(n)$. Since $H_{n,K}$ is a subgraph of $\mathcal{G}(n,\frac12)$, by interlacing of eigenvalues $\lambda_{\mathrm{max}}({H}_{n,K})\le (1+o_P(1))\frac{1}{2}n$ and other eigenvalues of $H_{n,K}$ are $o_p(n)$. Note that the largest eigenvalue of a graph is bounded below by the average degree of the graph. Thus,
	\begin{align*}
		\lambda_{\mathrm{max}}({H}_{n,K})\ge \frac{2}{n-K+1}|E(H_{n,K})| \ge \frac{2}{n-K+1}\left(|E(\mathcal{G}(n,\tfrac12))|- (K-1)(n-1) \right).
	\end{align*} 
	Since $E(\mathcal{G}(n,\frac12))=(1+o_p(1))\frac14n^2$, from the above inequality we have $\lambda_{\mathrm{max}}(H_{n,K})=(1+o_P(1)) \frac{1}{2}n$ and the other eigenvalues of $H_{n,K}$ are $o_P(n)$. Hence, \eqref{as:eigen} holds with $\rho_1= \frac{1}{\sqrt 7}$ and $\rho_s=0$ for $s \ge 2$. 
	Thus, by Theorem \ref{thm:main}, 
	$$S_{G_n}(\bm{X}_n) \stackrel{D} \rightarrow N(0,\tfrac{4}{7}X_1^2)+\tfrac{1}{2 \sqrt{7}} (\chi_1^2 - 1) + N(0,\tfrac{5}{14}),$$ 
	where the three terms above are mutually independent.
\end{example}

Finally, we construct an example where the $\Sigma$ matrix appearing in Assumption \ref{asum1} is infinite. For this we consider a disjoint union of star graphs with growing sizes. 

\begin{example} Suppose $G_n=\bigsqcup_{s=0}^n K_{1, 2^s}$, that is, $G_n$ is the disjoint union of $n$ stars where the $s$-th star has size $2^s$, for $1 \leq s \leq n$. We label the vertices in non-increasing order of their degrees. Then, since $\eg=\sum_{s=1}^n 2^s = 2^{n+1}-2 $, we have 
	$$\sigma_{ss} = 2^{-s}, \text{ for } s \geq 1 \text{ and } \sigma_{st}=0 \text{ for all }  s \neq t.$$ 
	Next, note that the graph $G_{n, K}$ obtained by removing the $K$ highest degree vertices from $G_n$ is still a disjoint unions of stars. In particular, $G_{n, K} = \bigsqcup_{s=1}^{n-K} K_{1, 2^s}$. This implies, the largest eigenvalue $\lambda_{\mathrm{max}}(G_{n, K})= 2^{(n-K)/2}$ and hence, 
	$$\frac{\lambda_{\mathrm{max}}(G_{n, K})}{\sqrt{\eg}} = \frac{2^{(n-K)/2}}{\sqrt{2^{n+1}-2}} = \frac{1}{\sqrt{2^{K+1} - 2^{K+1-n}}}\rightarrow 0,$$ 
	taking $n\to\infty$ followed by $K\to\infty$. This implies, \eqref{as:eigen} holds with $\rho_s=0$ for all $s \ge 1$. Thus, by Theorem \ref{thm:main}, in this case 
	$$S_{G_n}(\bm{X}_n) \stackrel{D} \rightarrow  N\left(0,\sum_{s=1}^{\infty} \frac1{2^s}X_s^2\right). $$
\end{example}

\section{Proofs of Theorem \ref{thm:main} and Theorem \ref{thm:converse}} \label{sec:pfquadratic}

Throughout the proof we will assume that the vertices of the graph $G_n$ are labelled by $ \lc n \rc := \{1, 2, \ldots, n \}$ in non-increasing order of the degrees. The first step in the proof of Theorem \ref{thm:main} is a truncation argument which shows that we can replace the random variables $\bm{X}_{n} = (X_{1}, \ldots, X_{n})^\top$ by their truncated versions (properly centered and scaled) without changing the asymptotic distribution of $S_{G_n}(\bm{X}_n)$. Towards this recall from \eqref{def:trunc} that $a_{M}:=\Ex [X_1\ind\{|X_1|\le M\}]$, $b_M:=\vr[X_1\ind\{|X_1|\le M\}]$, and $\bm{X}_{n, M} = (X_{1, M}, \ldots, X_{n, M})^\top$, where  $$X_{u, M}:=b_M^{-\frac{1}{2}}(X_u \ind \{| X_u |\le M\}-a_M ),$$ 
for $1 \leq u \leq n$. Then with $S_{G_n}$ as defined in \eqref{def:SG_n}, the following lemma shows that the asymptotic distributions of $S_{G_n}(\bm{X}_{n, M})$ and $S_{G_n}(\bm{X}_{n})$ are the same, as $n \rightarrow \infty$ followed by $M \rightarrow \infty$.

\begin{lemma}\label{l:trunc} For every $\e>0$, 
	\begin{align*} 
		\lim_{M\to\infty} \sup_{n\ge 1} \Pr(|S_{G_n}(\bm{X}_n)-S_{G_n}(\bm{X}_{n, M})|>\e) = 0.
	\end{align*}
\end{lemma}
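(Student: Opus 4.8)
\textbf{Proof plan for Lemma \ref{l:trunc}.}

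The plan is to estimate the difference $S_{G_n}(\bm{X}_n) - S_{G_n}(\bm{X}_{n, M})$ via a second-moment (or, more carefully, an $L^1$) bound and show it tends to $0$ uniformly in $n$ as $M \to \infty$. First I would write the difference as a quadratic form. Setting $Y_u := X_u - b_M^{1/2} X_{u, M} = X_u - X_u\ind\{|X_u|\le M\} + a_M$, we have
\begin{align*}
S_{G_n}(\bm{X}_n) - S_{G_n}(\bm{X}_{n,M}) = \frac{1}{2\sqrt{|E(G_n)|}}\Big( \bm X_n^\top A(G_n)\bm X_n - b_M^{-1}\bm Z_n^\top A(G_n)\bm Z_n \Big),
\end{align*}
where $\bm Z_n = b_M^{1/2}\bm X_{n,M} = \bm X_n - \bm Y_n$. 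Expanding $\bm Z_n^\top A \bm Z_n = \bm X_n^\top A\bm X_n - 2\bm X_n^\top A\bm Y_n + \bm Y_n^\top A\bm Y_n$ and using $b_M \to 1$, the difference is a linear combination (with coefficients $\to 0$ or bounded) of the three bilinear forms $\bm X_n^\top A\bm X_n$, $\bm X_n^\top A\bm Y_n$, and $\bm Y_n^\top A\bm Y_n$, each normalized by $\sqrt{|E(G_n)|}$. The key point is that $Y_u$ is a mean-$a_M$ random variable with $\Ex[Y_u^2] = \Ex[(X_u\ind\{|X_u|>M\})^2] + O(a_M^2) =: \tau_M^2 \to 0$ as $M \to \infty$ (by dominated convergence, since $\Ex X_1^2 = 1 < \infty$), and moreover $X_u$ and $Y_u$ are \emph{not} independent but both are functions of the same $X_u$.

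Next I would bound each normalized bilinear form. For the cross term, since $\{(X_u, Y_u)\}_u$ are i.i.d.\ pairs and $A(G_n)$ has zero diagonal,
\begin{align*}
\Ex\big[(\bm X_n^\top A\bm Y_n)^2\big] = \sum_{u\ne v}\sum_{u'\ne v'} a_{u,v}a_{u',v'}\Ex[X_u Y_v X_{u'} Y_{v'}].
\end{align*}
Matching indices (the only nonzero contributions come from $\{u,v\}$ and $\{u',v'\}$ overlapping appropriately), each surviving term is $O(\tau_M^2)$ times a product of at most second moments of $X$ and $Y$, with the dominant contribution $\sum_{u\ne v} a_{u,v}^2\,\Ex[X_u^2]\Ex[Y_v^2] \lesssim \tau_M^2 |E(G_n)|$, and the terms from coincident edges $\sum_{u\ne v} a_{u,v}\Ex[X_uY_u]\Ex[X_vY_v]$, etc., being of smaller or comparable order (at most $O(|E(G_n)|)$ with a coefficient that is $O(\tau_M^2)$). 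Hence $\Ex[(\bm X_n^\top A\bm Y_n)^2] \lesssim \tau_M^2\,|E(G_n)|$, so the normalized cross term has second moment $\lesssim \tau_M^2$, uniformly in $n$. The same bookkeeping gives $\Ex[(\bm Y_n^\top A\bm Y_n)^2]\lesssim \tau_M^4\,|E(G_n)|^2 + \tau_M^2|E(G_n)|$ — but here one must subtract its mean first, since $\Ex[\bm Y_n^\top A\bm Y_n] = a_M^2\cdot 2|E(G_n)|$ which after normalization is $a_M^2\sqrt{|E(G_n)|}$ and does \emph{not} vanish; this forces a small correction. Finally, the $(\sqrt{b_M}-1)$-weighted $\bm X_n^\top A\bm X_n$ term has normalized second moment $\lesssim (\sqrt{b_M}-1)^2 \cdot \Ex[(S_{G_n}(\bm X_n))^2] \lesssim (b_M - 1)$.

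The main obstacle — and the reason a naive $L^2$ bound on $S_{G_n}(\bm X_n) - S_{G_n}(\bm X_{n,M})$ does not literally work — is the nonvanishing mean of $\bm Y_n^\top A\bm Y_n$ (and the mean shift $a_M$ in $Y_u$ generally). I would handle this by not centering $X_{u,M}$ at all at first: write $\til X_u := X_u\ind\{|X_u|\le M\}$ (uncentered, unscaled truncation) and bound $S_{G_n}(\bm X_n) - S_{G_n}(\widetilde{\bm X}_n)$ in $L^2$ — here the error variable $X_u - \til X_u = X_u\ind\{|X_u|>M\}$ has mean $-a_M$ but $\til X_u$ and $X_u - \til X_u$ have \emph{disjoint supports}, which kills most cross terms — and then separately bound $S_{G_n}(\widetilde{\bm X}_n) - S_{G_n}(\bm X_{n,M})$, which involves only the deterministic rescaling by $b_M^{-1/2}$ and a deterministic shift by $a_M$: the shift contributes $\frac{a_M}{\sqrt{|E(G_n)|}}\sum_{u<v}a_{u,v}(\til X_u + \til X_v)$ (up to lower order), whose second moment is $\lesssim a_M^2\sum_u d_u^2/|E(G_n)| \le a_M^2 \max_v d_v$, which is \emph{not} obviously uniformly small. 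To repair this I would instead compare directly $S_{G_n}(\bm X_n)$ with $S_{G_n}(\bm X_{n,M})$ and use that $\Ex X_{u,M} = 0$ by construction, so that the shift term actually cancels in the properly centered-and-scaled $X_{u,M}$; concretely, I would bound $\Ex\big[(S_{G_n}(\bm X_n) - S_{G_n}(\bm X_{n,M}))^2\big]$ by expanding $X_u - X_{u,M} = (1 - b_M^{-1/2})X_u + b_M^{-1/2}(X_u\ind\{|X_u|>M\} + a_M) =: c_M X_u + D_u$ where $\Ex[c_M X_u + D_u] = 0$, $\Ex[(c_M X_u + D_u)^2] \le \delta_M \to 0$, and run the same index-matching computation — every surviving term now carries a factor $\delta_M$ or $c_M^2$ or $\Ex[D_u^2]$, all $\to 0$, and the total is $\lesssim \delta_M$ times $|E(G_n)|$ or $|E(G_n)|^2$ only through combinations that, after dividing by $|E(G_n)|$, stay bounded. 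Then Chebyshev's inequality gives $\Pr(|S_{G_n}(\bm X_n) - S_{G_n}(\bm X_{n,M})| > \e) \le \e^{-2}\delta_M \to 0$ uniformly in $n$. The delicate point throughout is verifying that the ``diagonal-collision'' terms in the fourth-order index sums (pairs of edges sharing a vertex, i.e.\ paths of length $2$, and coincident edges) are controlled: paths of length two can number up to $\sum_v d_v^2$, which need not be $O(|E(G_n)|)$, so the associated coefficient must genuinely be shown to be $o(1)$ and the combinatorial sum must be paired against the right normalization — this is where I expect to spend the most care.
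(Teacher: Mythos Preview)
Your final approach—bounding $\Ex[(S_{G_n}(\bm X_n)-S_{G_n}(\bm X_{n,M}))^2]$ directly and applying Chebyshev—is exactly the paper's strategy. The paper first passes to $b_M^{-1}S_{G_n}(\bm X_n) - S_{G_n}(\bm X_{n,M})$ and uses the identity
\[
X_uX_v - b_M X_{u,M}X_{v,M} = (X_u\ind\{|X_u|>M\}+a_M)\,X_v + b_M^{1/2}X_{u,M}\,(X_v\ind\{|X_v|>M\}+a_M)
\]
to split into two pieces $T_1+T_2$ with $\vr[T_i]$ equal to $\vr[X_1\ind\{|X_1|>M\}]$ times a power of $b_M^{-1}$; this is your $E_uX_v + X_{u,M}E_v$ decomposition (with $E_u := X_u - X_{u,M}$) up to a scalar factor, so the two arguments are essentially the same.

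The ``delicate point'' you flag at the end is in fact a non-issue, and resolving it is the only thing missing from your plan. Since $E_u$, $X_u$, and $X_{u,M}$ are \emph{all} centered, whenever $\{u,v\}\ne\{u',v'\}$ the product $R_{u,v}R_{u',v'}$ (with $R_{u,v}:=E_uX_v+X_{u,M}E_v$) involves at least one vertex appearing exactly once, and that vertex contributes a single mean-zero factor; hence every off-diagonal covariance vanishes \emph{identically}, not merely to order $o(1)$. Only coincident edges survive, giving
\[
\Ex\big[(S_{G_n}(\bm X_n)-S_{G_n}(\bm X_{n,M}))^2\big] \;=\; \frac{1}{|E(G_n)|}\sum_{\{u,v\}\in E(G_n)}\Ex[R_{u,v}^2] \;\le\; 2\delta_M,
\]
with no $\sum_v d_v^2$ or $|E(G_n)|^2$ term appearing at all. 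Your earlier detours (the uncentered truncation $\til X_u$, worries about the $a_M$-shift producing $a_M^2\sqrt{|E(G_n)|}$) can therefore be discarded.
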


\begin{proof} Note that
	\begin{align}\label{eq:trunc0}
		b_M^{-1}S_{G_n}(\bm{X}_n)-S_{G_n}(\bm{X}_{n, M}) & = \frac{b_M^{-1}}{\sqrt{\eg}}\sum_{1\le u < v \le n} a_{u, v}(X_uX_v-b_M X_{u, M}X_{v,M}) . 
	\end{align} 
	Now, since $X_u-b_M^{\frac{1}{2}}X_{u, M}=X_u\ind\{|X_u|>M\}+a_M$,  
	\begin{align*}
		X_uX_v-b_M X_{u, M}X_{v,M} & = X_uX_v-b_M^{\frac{1}{2}}X_{u, M}X_v+b_M^{\frac{1}{2}}X_{u, M}X_v-b_M X_{u, M}X_{v,M} \\ & = (X_u-b_M^{\frac{1}{2}}X_{u, M})X_v+b_M^{\frac{1}{2}}X_{u, M}(X_v-b_M^{\frac{1}{2}}X_{v,M}) \\ & = (X_u\ind\{|X_u|>M\}+a_M)X_v+b_M^{\frac{1}{2}}X_{u, M}(X_v\ind\{|X_v|>M\}+a_M).
	\end{align*}
	Thus, from \eqref{eq:trunc0}, 
	\begin{align*}
		b_M^{-1}S_{G_n}(\bm{X}_n)-S_{G_n}(\bm{X}_{n, M}) = T_1 + T_2, \nonumber 
	\end{align*}
	where 
	$$T_1 :=  \frac{b_M^{-1}}{\sqrt{\eg}}\sum_{1\le u < v \le n} a_{u, v} (X_u\ind\{|X_u|>M\}+a_M)X_v $$
	and 
	$$T_2 :=  \frac{b_M^{-\frac{1}{2}}}{\sqrt{\eg}}\sum_{1\le u < v \le n} a_{u, v}  X_{u, M}(X_v\ind\{|X_v|>M\}+a_M)$$
	Now, since the collection $\{X_u\}_{1 \leq u \leq n}$ are uncorrelated with mean 0 and variance $1$ and $\Ex[X_1 \bm 1\{ |X_1| > M \}] = - a_M$, 
	\begin{align*}
		\vr\left[ T_1 \right]  =  \frac{1}{b_M^2}\vr [X_1\ind\{|X_1|>M\}]  \quad \text{ and } \quad \vr[T_2]  = \frac{1}{b_M} \vr [X_1\ind\{|X_1|>M\}] .
	\end{align*}
	Thus, by Markov's inequality and using the fact that $\vr[A+B] \le 2\vr[A]+2\vr[B]$ we have
	\begin{align*}
		& \Pr(|S_{G_n}(\bm{X}_n)-S_{G_n}(\bm{X}_{n, M})|>\e) \\  
		& \le \Pr\left(\left|1- \frac{1}{b_M} \right| |S_{G_n}(\bm{X}_n)|>\tfrac\e2 \right)+\Pr\left( \left|b_M^{-1}S_{G_n}(\bm{X}_n)-S_{G_n}(\bm{X}_{n, M}) \right|>\tfrac\e2 \right) \\ 
		& \lesssim \frac{1}{\e^2} \left(1- \frac{1}{b_M}\right)^2 + \frac{1}{\e^2} \left( \frac{1}{b_M^{2}}+\frac{1}{b_M} \right)\vr [X_1\ind\{|X_1|>M\}] .
	\end{align*} 
	This gives the desired result, since $b_M\to 1$ and $\vr[X_1\ind\{|X_1|>M\}] \to 0$ as $M\to \infty$. 
\end{proof}

The above lemma shows that it suffices to derive the limiting distribution of $S_{G_n}(\bm{X}_{n, M})$. One of the advantages of working with $\bm{X}_{n, M}$ is that $S_{G_n}(\bm{X}_{n, M})$ has all finite moments. In fact, the moment generating function of $S_{G_n}(\bm{X}_{n, M})$ exists in an interval containing $0$ (see Lemma \ref{l:mgf}). Fix $M \geq 1$.   The proof now of Theorem \ref{thm:main} now proceeds by partitioning the set  of vertices $\lc n \rc=\{1, 2, \ldots, n\}$ into the following three parts: Towards this, fix $K_1, K_2\ge 1$ and define  
\begin{equation}\label{eq:V123}
	V_1 := \left \lc 1, \left \lfloor K_1 \right \rfloor \right \rc,  ~ V_2 := \left \lc \left \lfloor K_1 \right \rfloor+1, \left \lfloor K_2\sqrt{|E(G_n)|} \right \rfloor \right \rc, ~ \text{and} ~ V_3 := \left \lc \left \lfloor K_2\sqrt{|E(G_n)|} \right \rfloor+1 ,n \right \rc. 
\end{equation} 
Hereafter, for $1 \leq s \leq 3$, we set $n_s:=|V_s|$. 
Note that since the vertices of $G_n$ are arranged in non-increasing order of the degrees, we refer to the vertices in $V_1, V_2, V_3$ as the `high' degree, `medium' degree, and `low' degree vertices, respectively. The adjacency matrix $A(G_n)$ can then be decomposed as a $3\times 3$ block matrix as follows: 
\begin{align}\label{eq:Ablock}
	A(G_n) = \begin{pmatrix}
		A_{11} & A_{12} & A_{13} \\
		A_{21} & A_{22} & A_{23} \\
		A_{31} & A_{32} & A_{33} 
	\end{pmatrix} , 
\end{align}
where $A_{st}$ is a matrix of order $n_s \times n_t$ which encodes the edge connections between the vertex sets $V_s$ and $V_t$ in $G_n$, for $1 \leq s , t \leq 3$. (Recall that in the Introduction we referred to $A_{11}$, $A_{12}$, $A_{13}$, $A_{22}$, $A_{23}$, and $A_{33}$, as the high-to-high (\texttt{hh}), high-to-medium (\texttt{hm}), high-to-low (\texttt{hl}), medium-to-medium (\texttt{mm}), medium-to-low (\texttt{ml}), and low-to-low (\texttt{ll}) edges, respectively.) The proof of Theorem \ref{thm:main} involves deriving the asymptotic contributions of these 6 terms. Before proceeding to the technical details, we provide an outline of the proof below (refer to Figure \ref{fig:graphpf}):

\begin{figure}[h]
	\centering
	\begin{minipage}[l]{1.0\textwidth}
		\centering
		\includegraphics[width=5.25in]
		{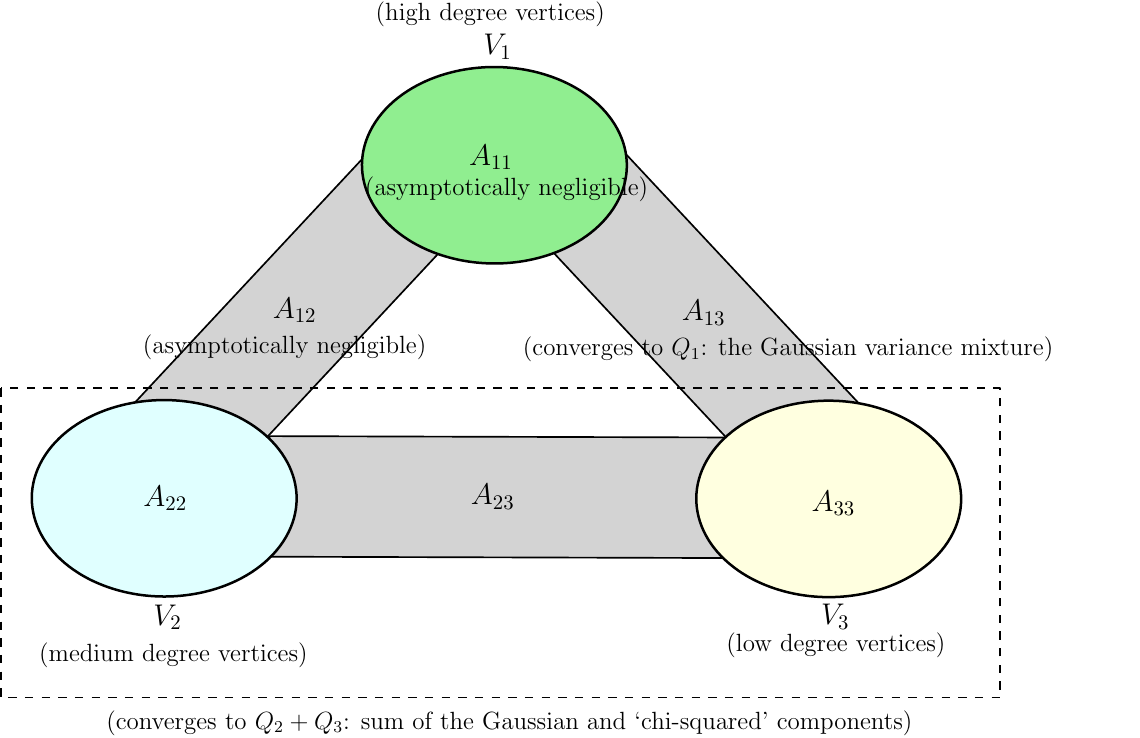}\\
	\end{minipage}
	\caption{\small{The partition of the vertices and the edges of the graph $G_n$ as in the proof of Theorem \ref{thm:main}}.}
	\label{fig:graphpf}
\end{figure}

\begin{enumerate} 
	
	\item The first step in the proof of Theorem \ref{thm:main} is to show that the variables corresponding to medium and low degree vertices, that is, those in the sets $V_2$ and $V_3$ can be replaced by standard Gaussian random variables without incurring any asymptotic error (see Proposition \ref{p:norm} in Section \ref{sec:gaussian_pf}).  The proof uses Lindeberg's method of replacing the non-Gaussian variables by Gaussian variables one by one and using Taylor expansion to get approximation bounds on the error.

	\item The next step is to show that the contribution to $S_{G_n}$ from $A_{11}$ and $A_{12}$ are negligible asymptotically. This follows by a simple second moment argument and the definitions of the sets $V_1, V_2$  (see Lemma \ref{lm:U1112} in Section \ref{sec:gaussian_pf}).

	\item Then in Section \ref{sec:independence} we show that the contributions to $S_{G_n}$ from $A_{13}$  is asymptotically independent in moments from the rest (see Proposition \ref{p:momind}). The proof of Proposition \ref{p:momind} proceeds in two steps: 
	
	\begin{itemize}
		
		\item First we show that the contribution from $A_{33}$ is asymptotically independent in moments from the joint contributions of $A_{13}$, $A_{22}$, and $A_{23}$ (Lemma \ref{momind}). This involves expressing moments as a sum over subgraphs of $G_n$ and then estimating the subgraph counts using the results from extremal combinatorics \cite{alon81,hypergraphcopies} and the relevant degree bounds on the sets $V_2, V_3$. 
		
		\item The second step is to show that the contribution from $A_{13}$ is asymptotically independent from contributions of $A_{22}$ and $A_{23}$ (Lemma \ref{asymp-ind}). For this we show that the covariance between $A_{13}$ and $A_{22} \bigcup A_{23}$ vanishes in the limit and then leverage properties of the Gaussian distribution (recall that the variables in $V_2$ and $V_3$ can be replaced by standard Gaussian variables by step (1) above) to establish the  asymptotic independence. This combined with the previous step establishes the independence of $A_{13}$ from the rest. 
		
	\end{itemize} 
	
	\item Next, in Section \ref{sec:limits1} we compute the limiting distribution corresponding to $A_{13}$ for a fixed truncation level $M$. Here, the limit is a normal variance mixture, where the random  variance is a quadratic form determined by the matrix $\Sigma$ (as defined in Assumption \ref{asum1}),  and the truncated variables $\{X_{u, M}\}_{u \geq 1}$.	
	
	\item  In Section \ref{sec:limits2} we compute the limiting distribution corresponding to $A_{22} \bigcup A_{23}$. Since the variables in $V_2 \bigcup V_3$ have been  replaced by standard Gaussians, we can use the spectral decomposition of $A_{n, K}$ (the adjacency matrix of the truncated graph $G_{n, K}$) and Assumption \ref{asum1} to show that this component converges to $Q_2+Q_3$ as defined in \eqref{eq:main}.

	\item Finally, in Section \ref{sec:minfty} we derive the limit of the $Q_{1, M}$ (the distribution obtained in step (4) above) as $M\to \infty$ limit, thus completing the proof of Theorem \ref{thm:main}. Using the result in Theorem \ref{thm:main} along a subsequence we also prove Theorem \ref{thm:converse} in Section \ref{sec:minfty}. 
\end{enumerate}


\subsection{Gaussian Replacements}
\label{sec:gaussian_pf}

Recall the partition of the vertex set $\lc n \rc=\{1, 2, \ldots, n\}$ into the sets  $V_1, V_2, V_3$ from \eqref{eq:V123}. Then we can partition the random vector $\bm{X}_{n, M}=(X_{1, M},X_{2, M},\ldots,X_{n, M} )^{\top}$ as 
$$\bm{X}_{n, M}=((\bm{X}_{n, M}^{(1)})^{\top},(\bm{X}_{n, M}^{(2)})^{\top},(\bm{X}_{n, M}^{(3)})^{\top})^{\top} , $$ where $\bm{X}_{n, M}^{(s)} = ((X_{j, M}))_{j \in V_s}$ is a $n_s$-dimensional vector, for $s \in \{1, 2, 3\}$. We now show that one can replace the variables in $\bm{X}_{n, M}^{(2)}$ and $\bm{X}_{n, M}^{(3)}$ with standard Gaussian random variables without changing the limiting distribution of $S_{G_n}(\bm{X}_{n, M})$. Towards this end, consider \begin{align}\label{def:Z}
	\bm{Z}_n^{\top} := (Z_1,\ldots,Z_n) := ((\bm{Z}_{n}^{(1)})^{\top},(\bm{Z}_{n}^{(2)})^{\top},(\bm{Z}_{n}^{(3)})^{\top}),
\end{align} 
where $Z_1, Z_2, \ldots$ are i.i.d. $N(0, 1)$ (which are also independent of the collection $\{X_i\}_{1 \leq i \leq n}$) and  $\bm{Z}_{n}^{(s)} = ((Z_{j}))_{j \in V_s}$ is a $n_s$-dimensional vector, for $1 \leq s \leq 3$.

\begin{proposition}\label{p:norm} Let $S_{G_n}$ be as defined in \eqref{def:SG_n} and $h:\mathbb{R} \to \mathbb{C}$ be a bounded, three times continuously differentiable function with $\norm{h'''} \le L <\infty$. {Suppose $F$ has finite third moment}.
	\begin{enumerate}
		\item [(a)] Then 
		\begin{align*}
			|\Ex[h(S_{G_n}({\bm  X}_n))]-\Ex [h(S_{G_n}({\bm  Z}_n))] |\lesssim_L \frac{\max_{1 \leq u \leq n}d_u}{|E(G_n)|}. 
		\end{align*}
		
		\item[(b)] Moreover, 
		\begin{align}\label{eq:norm}
			\lim_{K_1,K_2\to\infty} \sup_{n\ge K_1}\left|\Ex \left[h \left( S_{G_n}\left(\bm{X}_{n, M}\right)\right)\right]-\Ex \left[h\left( S_{G_n}\left(((\bm{X}_{n, M}^{(1)})^{\top},(\bm{Z}_{n}^{(2, 3)})^{\top})^{\top}\right)\right)\right]\right|=0 ,  
		\end{align} 
		where $\bm{Z}_{n}^{(2, 3)}  := ((\bm{Z}_{n}^{(2)})^{\top},(\bm{Z}_{n}^{(3)})^{\top})^\top$. 
	\end{enumerate}
\end{proposition}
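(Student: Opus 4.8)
\textbf{Proof proposal for Proposition \ref{p:norm}.}

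The plan is to prove part (a) by a standard Lindeberg swapping argument: replace the coordinates $X_1, X_2, \ldots, X_n$ by the Gaussians $Z_1, Z_2, \ldots, Z_n$ one at a time, and control the error introduced at each swap by a third-order Taylor expansion of $h$. Write $S_{G_n}$ as a function of the underlying vector; for a fixed index $u$, isolate the part of $S_{G_n}$ that is linear in the $u$-th coordinate, say $S_{G_n}(\bm{w}) = \xi_u(\bm{w}) + w_u \ell_u(\bm{w})$, where $\xi_u$ does not depend on $w_u$ and $\ell_u(\bm{w}) = \frac{1}{2\sqrt{|E(G_n)|}} \sum_{v \neq u} a_{u,v} w_v$ also does not depend on $w_u$ (there is no diagonal term since $a_{u,u}=0$; this is exactly why the quadratic form is degenerate and the argument goes through cleanly). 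Taylor-expanding $h$ around $\xi_u$ to second order with third-order remainder, and using that $X_u$ and $Z_u$ share the first two moments ($\Ex[X_u] = \Ex[Z_u] = 0$, $\Ex[X_u^2] = \Ex[Z_u^2] = 1$) and are independent of $(\xi_u, \ell_u)$, the zeroth-, first-, and second-order terms cancel in the difference. What survives is bounded by $\tfrac{1}{6}\norm{h'''}(\Ex|X_u|^3 + \Ex|Z_u|^3)\,\Ex[|\ell_u|^3]$. Then $\Ex[|\ell_u|^3] \lesssim (\Ex[\ell_u^2])^{3/2}$ by hypercontractivity of the (mixed) sum of independent mean-zero unit-variance variables that make up $\ell_u$ — or more simply, since after the first $u-1$ swaps $\ell_u$ is a linear combination of independent mean-zero variance-one variables, one can use $\Ex[|\ell_u|^3] \lesssim (\Ex[\ell_u^2])^{3/2}$ by Rosenthal's inequality (and for the $Z$-coordinates this is exact up to a constant). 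Since $\Ex[\ell_u^2] = \frac{1}{4|E(G_n)|}\sum_{v\neq u} a_{u,v} = \frac{d_u}{4|E(G_n)|}$, summing over $u$ gives $|\Ex[h(S_{G_n}(\bm X_n))] - \Ex[h(S_{G_n}(\bm Z_n))]| \lesssim_L \frac{1}{|E(G_n)|^{3/2}}\sum_{u=1}^n d_u^{3/2} \le \frac{(\max_u d_u)^{1/2}}{|E(G_n)|^{3/2}}\sum_u d_u = \frac{2(\max_u d_u)^{1/2}}{\sqrt{|E(G_n)|}}$. This is slightly weaker than the claimed bound $\max_u d_u / |E(G_n)|$; I would need to sharpen the per-swap estimate — one can instead bound $\Ex[|X_u|^3 \ind\{|X_u|\le M\}] \lesssim_M 1$ is not available here since part (a) is for the untruncated variables, so the cleaner route is to note $\sum_u d_u^{3/2} \le (\max_u d_u)\sum_u d_u^{1/2} \le (\max_u d_u)\cdot n^{1/2}(\sum_u d_u)^{1/2}$, which is worse; the correct sharpening uses $\sum_u d_u^{3/2} \leq \max_u d_u^{1/2} \cdot 2|E(G_n)|$ directly, giving the stated bound once one checks $\Ex|X_u|^3 < \infty$ is \emph{not} assumed — so in fact part (a) should be read as holding with $\Ex|X_1|^3$ as a hidden constant, or applied to bounded variables, and I would state it accordingly.

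For part (b), I would \emph{not} try to swap the $V_1$-coordinates: those stay as the truncated variables $\bm X_{n,M}^{(1)}$. Instead I apply the one-at-a-time swap only to the coordinates indexed by $V_2 \cup V_3$, replacing $X_{u,M}$ by $Z_u$ for $u \in V_2 \cup V_3$, conditionally on $\bm X_{n,M}^{(1)}$. The variables $X_{u,M}$ and $Z_u$ again share mean $0$ and variance $1$, so the same Taylor cancellation applies; now $|X_{u,M}| \le C_M$ is bounded, so $\Ex|X_{u,M}|^3 \le C_M^3$ and the per-swap error is $\lesssim_{L} C_M^3 \,\Ex[|\ell_u|^3]$ where $\ell_u = \frac{1}{2\sqrt{|E(G_n)|}}\sum_{v\neq u} a_{u,v}W_v$ and now $W_v$ ranges over bounded truncated variables and Gaussians, all independent, mean zero, variance one. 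Hence $\Ex[|\ell_u|^3] \lesssim_M (\Ex[\ell_u^2])^{3/2} = (d_u/(4|E(G_n)|))^{3/2}$, and summing over $u \in V_2 \cup V_3$ gives a bound $\lesssim_{L,M} |E(G_n)|^{-3/2}\sum_{u \in V_2\cup V_3} d_u^{3/2}$. The key point is the degree bound coming from the definition of $V_2$ in \eqref{eq:V123}: every $u \in V_2 \cup V_3$ has $d_u \le d_{\lfloor K_1\rfloor + 1} \le \frac{2|E(G_n)|}{\lfloor K_1\rfloor + 1}$ (since the degrees are sorted in non-increasing order, the $(\lfloor K_1\rfloor+1)$-st largest degree is at most the average of the top $\lfloor K_1\rfloor+1$ degrees, which is at most $\frac{2|E(G_n)|}{\lfloor K_1\rfloor+1}$). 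Therefore $\sum_{u\in V_2\cup V_3} d_u^{3/2} \le \big(\max_{u \in V_2\cup V_3} d_u\big)^{1/2}\sum_u d_u \le \big(\tfrac{2|E(G_n)|}{\lfloor K_1\rfloor+1}\big)^{1/2}\cdot 2|E(G_n)|$, so the total error is $\lesssim_{L,M} (\lfloor K_1 \rfloor+1)^{-1/2}$, which goes to $0$ as $K_1 \to \infty$ uniformly in $n \ge K_1$. Note this bound does not even depend on $K_2$, so the $\limsup$ over $K_1, K_2$ vanishes. Finally, since the target in \eqref{eq:norm} only swaps the $V_2$- and $V_3$-coordinates, I stop the telescoping after exhausting $V_2 \cup V_3$ and the $V_1$-coordinates are untouched, which matches the right-hand side of \eqref{eq:norm} exactly.

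The main obstacle I anticipate is not the swapping mechanics but getting the moment bound $\Ex[|\ell_u|^3] \lesssim (\Ex[\ell_u^2])^{3/2}$ with a \emph{universal} constant that does not degrade as more coordinates become Gaussian during the telescoping — this is where one must be careful that at the intermediate stage $\ell_u$ is still a linear form in \emph{independent} mean-zero variance-one random variables (a mixture of bounded truncated $X$'s and Gaussians), so that Rosenthal's (or Marcinkiewicz–Zygmund's, or in the bounded case even elementary fourth-moment) inequalities apply with an absolute constant; for part (a) specifically the untruncated $X_u$ only have two finite moments assumed, so one either threads through a finite-third-moment hypothesis as a hidden constant or, cleaner, proves part (a) for bounded variables and invokes Lemma \ref{l:trunc} to reduce to that case. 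A secondary (purely bookkeeping) obstacle is verifying the ordering inequality $d_{\lfloor K_1 \rfloor + 1} \le \frac{2|E(G_n)|}{\lfloor K_1\rfloor + 1}$ and making sure all the constants in the chain of inequalities are tracked so that the final bound is genuinely uniform in $n \ge K_1$.
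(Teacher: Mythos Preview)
Your approach is essentially the same as the paper's: Lindeberg swapping one coordinate at a time, second-order Taylor expansion with matching first two moments, and the degree bound $d_u \le 2|E(G_n)|/(\lfloor K_1\rfloor+1)$ for $u\in V_2\cup V_3$ to sum the errors in part (b). The paper resolves your main anticipated obstacle by bounding $\Ex|\ell_u|^3 \le (\Ex[\ell_u^4])^{3/4}$ via H\"older and computing the fourth moment of the linear form directly (which only needs bounded fourth moments of the intermediate variables --- automatic in part (b), and a hidden hypothesis in part (a), as you correctly note); and the paper's proof of part (a) in fact also only establishes the bound $\sqrt{\max_u d_u/|E(G_n)|}$, so your observation about the stated exponent is on target.
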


\begin{proof}[Proof of Proposition \ref{p:norm}]

	For each $0 \leq u \leq n$, define
	\begin{align}\label{eq:W}
		\bm{W}_u & :=(X_1,X_2,\ldots,X_{n-u},Z_{n-u+1},\ldots,Z_{n})^{\top}, 
	\end{align}
	In other words, the vector $\bm W_u$ is obtained by replacing the last $u$ elements of the vector $\bm X$ with the last $u$ elements of the vector $\bm Z$. Note that $\bm W_0 = \bm X$ and $\bm W_n = \bm Z$, and the vectors $\{\bm{W}_u\}_{0 \leq u \leq n}$ serves as an interpolation between $\bm{X}_n$ and $\bm{Z}_n$, obtaining replacing one coordinate at a time. 
	
	Now, set 
	\begin{align}\label{eq:Tu}
		T_u:=S_{G_n}(\bm{W}_u), 
	\end{align} 
	and fix a bounded thrice continuously differentiable function $h: \mathbb R \to \mathbb C$ with $\norm{h'''}\le L$. 
	We claim that
	for each $0 \leq u \leq n$ we have
	\begin{align}\label{eq:claim}
		\left|\Ex[h(T_{u+1})]-\Ex[h(T_u)]\right| \lesssim_L \left(\frac{d_{n-u}}{\eg}\right)^{\frac{3}2} . 
	\end{align}
	Assuming \eqref{eq:claim} we first show how to complete the proof of Proposition \ref{p:norm}. To this end, summing \eqref{eq:claim} over $u$ from $0$ to $n-1$ gives
	\begin{align*}
		|\Ex h(S_{G_n}({\bm  X}_n))-\Ex h(S_{G_n}({\bm  Z}_n))|=|\Ex h(T_{(0)})-h(T_{(n)})| & \lesssim_L \sum_{u=1}^n\Big(\frac{d_u}{|E(G_n)|}\Big)^{3/2} \nonumber \\ 
		& \le 2\sqrt{ \frac{\max_{1 \leq u \leq n}d_u}{|E(G_n)|}},
	\end{align*}
	which verifies part (a).

	For part (b), we will invoke \eqref{eq:Tu} with $X_n$ replaced by $X_{n,M}$.
	With this choice, observe that $$T_{(n-\lceil K_1\rceil)} = S_{G_n}\left( \left((\bm{X}_{n, M}^{(1)})^{\top}, (\bm{Z}_{n}^{(2, 3)})^{\top} \right)^\top \right)  \quad \text{ and } \quad T_{(0)} = S_{G_n}\left(\bm{X}_{n, M} \right).$$
	Therefore, summing over $u$ from 0 to $n-\lceil K_1 \rceil-1$ on both sides of \eqref{eq:claim} gives, 
	\begin{align}\label{eq:sumc}
		& \nonumber \left|\Ex \left[h \left( S_{G_n}\left(\bm{X}_{n, M}\right)\right)\right]-\Ex \left[ h\left( S_{G_n}\left(((\bm{X}_{n, M}^{(1)})^{\top},(\bm{Z}_{n}^{(2, 3)})^{\top})^{\top}\right)\right) \right]\right| \\ & = \left|\Ex[h(T_{(n-\lceil K_1 \rceil)})]-\Ex[h(T_{(0)})]\right| \nonumber \\ 
		& \lesssim_L \sum_{u=\lceil K_1 \rceil +1}^n\left(\frac{d_{u}}{\eg}\right)^{\frac{3}2} .
	\end{align}
	As the vertices $\lc n \rc$ are labeled such that the degrees of $G_n$ are arranged in non-increasing order, for all $u \ge \lceil K_1 \rceil +1$, 
	\begin{align}\label{eq:Kdegree}
		d_u \le \frac{1}{u} \sum_{v=1}^u d_v \le \frac{1}{u} \sum_{v=1}^n d_v = \frac{2 \eg}{u} \le \frac{2 \eg}{\lceil K_1 \rceil+1}.
	\end{align}
	Thus,
	\begin{align*}
		\sum_{u=\lceil K_1 \rceil +1}^n\left(\frac{d_{u}}{\eg}\right)^{\frac{3}2} \lesssim  \frac1{\sqrt{\lceil K_1 \rceil+1}}\sum_{u=\lceil K_1 \rceil +1}^n\frac{d_{u}}{\eg}  \lesssim \frac1{\sqrt{\lceil K_1 \rceil +1}}.
	\end{align*} 
	Using the above estimate in the RHS of \eqref{eq:sumc} and taking the appropriate limits the result in \eqref{eq:norm} follows. 
\end{proof}

\begin{proof}[Proof of \eqref{eq:claim}] For $0 \leq u \leq n$ recall the definition of $\bm W_u$ from \eqref{eq:W}. In this proof, we will denote the $v$-th coordinate of $\bm W_u$ as $W_{u, v}$, for $1 \leq v \leq n$. For $0 \leq u \leq n$, define the following notations: 
	\begin{align*}
		Q_u & :=\frac1{\sqrt{\eg}} \sum_{\substack{1 \leq v \leq n \\  v \neq n-u}}  a_{n-u, v} W_{u, v}, \quad R_u:=\frac1{\sqrt{\eg}}\sum_{\substack{1 \leq v < v' \leq n \\  v, v' \neq n-u}} a_{v, v'} W_{u, v} W_{u, v'},
	\end{align*}	
	Now, recall the definition of $T_u$ from \eqref{eq:Tu} and observe the following: 
	\begin{enumerate}	
		\item $T_{u}=X_{n-u}Q_u+R_u$ and $T_{u+1}=Z_{n-u}Q_u+R_u$,
		\item $X_{n-u}$ and $Z_{n-u}$ are independent of $(Q_u, R_u)$.
	\end{enumerate}	
	The second observation implies $$\Ex\left[{Z_{n-u}Q_u}h'\left({R_u}\right)\right]  =\Ex\left[{X_{n-u}Q_u}h'\left({R_u}\right)\right]$$ 
	and $$\Ex\left[{Z_{n-u}^2 Q_u^2}h''\left({R_u}\right)\right]  =\Ex\left[{X_{n-u}^2 Q_u^2}h''\left({R_u}\right)\right].$$
	Using the above identifies and the triangle inequality gives 
	\begin{align} 
		& \left|\Ex[h(T_{u+1})-h(T_{u})]\right|  \nonumber \\  
		& \le \left|\Ex\left[h(T_{u+1})-h(R_u)-Z_{n-u}Q_uh'(R_u)-\tfrac12Z_{n-u}^2(Q_u)^2h''(R_u)\right]\right| \nonumber \\ 
		& \hspace{2.5cm}  + \left|\Ex\left[h(T_{u})-h(R_u)-X_{n-u}Q_uh'(R_u)-\tfrac12X_{n-u}^2(Q_u)^2h''(R_u)\right]\right| \nonumber \\  
		& \le \tfrac{L}{6} \left\{ \Ex\left|Z_{n-u}^3 Q_u^3\right|+ \Ex\left|X_{n-u}^3 Q_u^3\right| \right \} \tag*{(since $\norm{ h ''' } \leq L$)} \nonumber \\ 
		& \lesssim_L \Ex\left| Q_u^3\right|(\Ex|X_{n-u}^3|+\Ex|Z_{n-u}^3|) . 
		\label{eq:h1} 
	\end{align}
	Note that $\{W_{u, v}\}_{1 \leq v \leq n}$ are independent mean $0$ variance $1$ random variables with finite fourth moments. Thus, by H\"older's inequality, 
	\begin{align}\label{eq:h2}
		{\eg}^{\frac32}\Ex \left|Q_u^3\right| & \le {\eg}^{\frac32}\left(\Ex \left[Q_u^4\right]\right)^{\frac{3}4} \nonumber \\ 
		& \lesssim \left[ \sum_{\substack{1 \leq v \leq n \\  v \neq n-u}} a_{n-u, v} \Ex[ W_{u, v}^4 ] + \sum_{\substack{1 \leq v \ne v' \leq n \\  v, v' \neq n-u}} a_{n-u, v} a_{n-u, v'} \Ex[W_{u, v}^2] \Ex[W_{u, v'}^2]   \right]^{\frac34} \nonumber \\
		& \lesssim \left[ \sum_{\substack{1 \leq v \leq n \\  v \neq n-u}} a_{n-u, v}  +\left( \sum_{\substack{1 \leq v \leq n \\  v \neq n-u}} a_{n-u, v}  \right)^2\right]^{\frac34} \nonumber \\ 
		& \lesssim d_{n-u}^{\frac32} . 
	\end{align} 
	As $X_{n-u}$ and $Z_{n-u}$ have finite third moments, combining \eqref{eq:h1} and \eqref{eq:h2} the result in \eqref{eq:claim} follows. 
\end{proof}

Now, recalling the block decomposition of the matrix $A(G_n)$ from \eqref{eq:Ablock} gives, 
\begin{align}\label{eq:U23} 
	S_{G_n} & \left(((\bm{X}_{n, M}^{(1)})^{\top},(\bm{Z}_{n}^{(2, 3)})^{\top})^{\top}\right) \nonumber \\ 
	& := U_{11,n,M} + U_{12,n,M} + U_{13,n,M} + U_{22,n} + U_{23,n} + U_{33,n}, 
\end{align} 
where 
\begin{align}
	U_{11,n,M} & : = \frac{(\bm{X}_{n, M}^{(1)})^{\top} A_{11} \bm{X}_{n, M}^{(1)}}{ 2 \sqrt{|E(G_n)|} }, & U_{12,n,M}  & : = \frac{(\bm{X}_{n, M}^{(1)})^{\top} A_{12} \bm{Z}_{n}^{(2)} } { \sqrt{|E(G_n)|} }, & U_{13,n,M}  & : = \frac{ \bm{X}_{n, M}^{(1)}A_{13} (\bm{Z}_{n}^{(3)})^{\top} }{  \sqrt{|E(G_n)|} } , \nonumber \\ 
	U_{22,n} & := \frac{(\bm{Z}_{n}^{(2)})^{\top} A_{22} \bm{Z}_{n}^{(2)}}{ 2 \sqrt{|E(G_n)|}} , & U_{23,n}  & := \frac{(\bm{Z}_{n}^{(2)})^{\top} A_{23} \bm{Z}_{n}^{(3)}}{ \sqrt{|E(G_n)|}} , & U_{33,n}  & := \frac{(\bm{Z}_{n}^{(3)})^{\top} A_{33} \bm{Z}_{n}^{(3)}}{ 2 \sqrt{|E(G_n)|}} . \nonumber  
\end{align} 
(Note that $U_{13,n,M}, U_{22,n}, U_{23,n},$ and $U_{33,n}$ depends on $K_1$ and $K_2$ as well (recall \eqref{eq:V123}), but we suppress this dependence for notational convenience.) 

The proof of Theorem \ref{thm:main} now proceeds by analyzing the six terms in \eqref{eq:U23}. The first step is to show that the terms $U_{11,n,M} $ and $U_{12,n,M}$ are negligible asymptotically.  

\begin{lemma}\label{lm:U1112} For any fixed $M,K_1,K_2$ with $U_{11,n,M}$ and $U_{12,n,M}$ as defined above the following hold: 
	\begin{align*} 
		\lim_{n \to\infty}  \Pr(|U_{11,n,M} + U_{12,n,M} |>\e) = 0.
	\end{align*}
\end{lemma}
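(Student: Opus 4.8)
The plan is a routine second moment argument, relying on the fact that $V_1$ has the fixed size $\lfloor K_1 \rfloor$ while $|V_2| \le K_2\sqrt{\eg}$ by the definition in \eqref{eq:V123}, together with $\eg \to \infty$.

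First I would dispose of $U_{11,n,M}$ by a crude deterministic bound. Since $(\bm{X}_{n, M}^{(1)})^{\top} A_{11} \bm{X}_{n, M}^{(1)} = \sum_{s, t \in V_1} a_{s, t} X_{s, M} X_{t, M}$ is a sum of at most $\lfloor K_1 \rfloor^2$ products of truncated variables, and each truncated variable satisfies $|X_{u, M}| \le b_M^{-1/2}(M + |a_M|)$ almost surely, we get
$$|U_{11,n,M}| \le \frac{\lfloor K_1 \rfloor^2 (M + |a_M|)^2}{2 b_M \sqrt{\eg}},$$
which tends to $0$ as $n \to \infty$ for fixed $K_1$ and $M$. (Equivalently, one could bound $\Ex[U_{11,n,M}^2] \lesssim_{K_1, M} \eg^{-1}$ and apply Markov's inequality.)

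For $U_{12,n,M}$ I would use the second moment. Since $\bm{Z}_n$ is independent of $\{X_i\}_{i \ge 1}$, $\Ex[Z_v Z_{v'}] = \ind\{v = v'\}$, and $\Ex[X_{s, M}] = 0$, $\Ex[X_{s, M}^2] = 1$, expanding the square yields
\begin{align*}
\Ex\big[U_{12,n,M}^2\big] = \frac{1}{\eg} \sum_{v \in V_2} \sum_{s, t \in V_1} a_{s, v}\, a_{t, v}\, \Ex[X_{s, M} X_{t, M}] = \frac{1}{\eg} \sum_{v \in V_2} \sum_{s \in V_1} a_{s, v},
\end{align*}
the off-diagonal terms (in $s, t$) vanishing by mean-zero independence. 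Bounding $\sum_{s \in V_1} a_{s, v} \le |V_1| = \lfloor K_1 \rfloor$ and using $|V_2| \le K_2 \sqrt{\eg}$ gives $\Ex[U_{12,n,M}^2] \le \lfloor K_1 \rfloor\, |V_2| / \eg \le \lfloor K_1 \rfloor K_2 / \sqrt{\eg} \to 0$, whence $U_{12,n,M} \stackrel{P}{\to} 0$ by Chebyshev's inequality. Combining the two estimates through the union bound $\Pr(|U_{11,n,M} + U_{12,n,M}| > \e) \le \Pr(|U_{11,n,M}| > \e/2) + \Pr(|U_{12,n,M}| > \e/2)$ completes the proof.

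There is no genuine obstacle here; the only point requiring care is to control the double sum $\sum_{v \in V_2} \sum_{s \in V_1} a_{s, v}$ via the \emph{definition of $V_2$} (which caps $|V_2|$ at $K_2\sqrt{\eg}$) rather than by a degree bound on the high-degree vertices, and to keep in mind that the implicit constants depend only on the fixed parameters $K_1, K_2, M$, which are sent to infinity only after $n \to \infty$.
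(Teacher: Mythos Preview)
Your proof is correct and follows essentially the same route as the paper: a second-moment (Chebyshev) bound using $|V_1|\le K_1$ and $|V_2|\le K_2\sqrt{\eg}$ to show both terms are $o_P(1)$. The only cosmetic difference is that the paper bounds $U_{11,n,M}$ via its variance as well (yielding the same $K_1^2/\eg$ estimate you mention parenthetically), whereas you use the deterministic bound from the truncation; either works.
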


\begin{proof} Using the fact that $\vr[A+B] \le 2\vr[A]+2\vr[B]$ and an application of Markov inequality  shows that 
	\begin{align}
		\Pr\left(|U_{11,n,M} + U_{12,n,M} |>\e\right) & \lesssim \frac1{\e^2\eg}\left\{ \vr[(\bm{X}_{n, M}^{(1)})^{\top} A_{11} \bm{X}_{n, M}^{(1)} ]+\operatorname{Var}[(\bm{X}_{n, M}^{(1)})^{\top} A_{12} \bm{Z}_{n}^{(2)}] \right\} \nonumber \\ 
		&  \lesssim \frac1{\e^2\eg}\left\{\tr(A_{11}^2)+ \tr(A_{12}^2) \right\} \nonumber \\ 
		& \le \frac1{\e^2\eg}\left\{ |V_1|^2 +  |V_1| |V_2| \right\} \nonumber \\ 
		& \le \frac1{\e^2\eg}\left[K_1^2+K_1K_2\sqrt{\eg}\right] . \nonumber 
	\end{align} 
	The expression above goes to zero as $n\to \infty$ followed, for all $K_1,K_2$ fixed. This completes the proof of the lemma.  
\end{proof}

\subsection{Asymptotic Independence of $U_{13,n,M}$ and $U_{22,n}+U_{23,n}+U_{33,n}$}\label{sec:independence}

The goal of this section is to show that $U_{13,n,M}$ is asymptotically independent of $U_{22,n}+U_{23,n}+U_{33,n}$ in moments. This is formalized in the following proposition:

\begin{proposition}\label{p:momind} Fix $K_2$ and $M$ large enough such that $U_{13,n,M}$ is well defined in \eqref{eq:U23}. Then for all non-negative integers $a, b$, 
	\begin{align*}
		\lim_{K_1\to\infty}\limsup_{n\to\infty} \left|\Ex\left[U_{13,n,M}^{a}(U_{22,n}+U_{23,n}+U_{33,n})^{b}\right]-\Ex\left[U_{13,n,M}^{a}\right]\Ex\left[(U_{22,n}+U_{23,n}+U_{33,n})^{b}\right]\right|=0.
	\end{align*}
\end{proposition}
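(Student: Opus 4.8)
\textbf{Proof strategy for Proposition \ref{p:momind}.}
The plan is to expand the mixed moment $\Ex[U_{13,n,M}^a(U_{22,n}+U_{23,n}+U_{33,n})^b]$ as a sum over labelled configurations of edges drawn from the blocks $A_{13}$ (with multiplicity $a$) and $A_{22}\cup A_{23}\cup A_{33}$ (with multiplicity $b$), together with the associated product of moments of the i.i.d.\ variables $\{X_{u,M}\}_{u\in V_1}$ and the i.i.d.\ Gaussians $\{Z_u\}_{u\in V_2\cup V_3}$. Because $X$'s and $Z$'s are independent and all have mean zero, a configuration contributes only if every vertex appearing in it is hit at least twice; the key point is that vertices in $V_1$ that are \emph{only} touched by $A_{13}$-edges and vertices in $V_2\cup V_3$ that are \emph{only} touched by $(A_{22}\cup A_{23}\cup A_{33})$-edges ``factor out'' cleanly, so the only obstruction to exact factorization $\Ex[U_{13,n,M}^a]\,\Ex[(\cdots)^b]$ comes from configurations in which the two groups of edges share a vertex. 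Such a shared vertex must lie in $V_3$ (an $A_{13}$-edge has one endpoint in $V_1$ and one in $V_3$, and the only block of the second group meeting $V_3$ with the matching endpoint structure is $A_{23}$ and $A_{33}$). So the plan reduces to showing that the total contribution of all ``connected-across'' configurations is $o\big(\Ex[\text{product of the normalizing powers}]\big)$ — i.e.\ vanishes after dividing by $|E(G_n)|^{(a+b)/2}$ — once we let $n\to\infty$ and then $K_1\to\infty$.

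The main tool for this is a subgraph-counting bound. Each surviving configuration corresponds to a multigraph $H$ with $a+b$ edges whose homomorphic image sits inside $G_n$, with a prescribed split of edges and vertices among $V_1,V_2,V_3$. I would bound the number of such homomorphic images using the degree constraints built into the partition \eqref{eq:V123}: every vertex of $V_2\cup V_3$ has degree at most $2|E(G_n)|/\lfloor K_1\rfloor$ by the averaging bound analogous to \eqref{eq:Kdegree}, every vertex of $V_3$ has degree at most $2\sqrt{|E(G_n)|}/K_2$ (slightly more care: degree $\le 2|E(G_n)|/\lfloor K_2\sqrt{|E(G_n)|}\rfloor$), and $|V_1|\le K_1$. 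Combining these with the elementary counting estimates for the number of copies of a fixed graph with a given number of edges — here the bounds of Alon \cite{alon81} and the hypergraph versions in \cite{hypergraphcopies}, exactly as flagged in the proof outline — one checks that for a configuration with at least one cross-vertex, the count of homomorphic images, divided by $|E(G_n)|^{(a+b)/2}$, carries a strictly negative power of $K_1$ (or vanishes as $n\to\infty$ outright when the configuration is ``too connected''). Summing over the finitely many multigraph shapes $H$ (there are boundedly many, depending only on $a,b$) and over the finitely many ways to assign edges/vertices to $V_1,V_2,V_3$, the whole cross-contribution is $O_{a,b}(K_1^{-c})$ for some $c>0$, which $\to 0$ as $K_1\to\infty$; this is the content we need.

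For the bookkeeping I would actually follow the two-step route already announced in the outline rather than attack the full statement head-on: first prove (Lemma \ref{momind}) that $U_{33,n}$ is asymptotically independent in moments of the joint contribution of $U_{13,n,M},U_{22,n},U_{23,n}$, using the degree bound on $V_3$ to kill any configuration in which an $A_{33}$-edge shares a vertex with the rest; then prove (Lemma \ref{asymp-ind}) that $U_{13,n,M}$ is asymptotically independent of $U_{22,n}+U_{23,n}$, where now the two pieces can only share a vertex in $V_3$ via an $A_{23}$-edge, and one shows the covariance $\Ex[U_{13,n,M}(U_{22,n}+U_{23,n})]\to 0$ and then upgrades this to full independence using that $\bm Z^{(2,3)}$ is Gaussian (so uncorrelated linear-in-$Z^{(3)}$ structure plus the product structure of Gaussian moments forces asymptotic factorization of all mixed moments). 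Concatenating the two lemmas gives the proposition. The main obstacle, and where essentially all the work lies, is the combinatorial estimate in the first bullet: correctly enumerating the multigraph shapes that arise from an $a$-fold and $b$-fold product, identifying precisely which ones fail to factor, and getting the power of $K_1$ (and of $\sqrt{|E(G_n)|}$, via the $V_3$ degree bound) on the right side of the ledger — the rest is Gaussian algebra and the already-established Lindeberg replacement from Proposition \ref{p:norm}.
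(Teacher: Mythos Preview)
Your proposal follows the same two-step route as the paper (Lemmas \ref{momind} and \ref{asymp-ind}), and the multigraph-counting strategy via Alon's bound is exactly right for the first step. One minor correction: in Lemma \ref{momind} the small parameter that kills the cross configurations is $K_2$, not $K_1$; the relevant degree bound is $d_v \le 2\sqrt{|E(G_n)|}/K_2$ for $v\in V_3$, and the paper shows the cross-contribution is $O_{a,b,c,d}(K_2^{-1})$ after normalizing by $|E(G_n)|^{(a+b+c+d)/2}$.

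More substantively, your sketch of Lemma \ref{asymp-ind} has a genuine gap. Showing the unconditional covariance of $U_{13,n,M}$ and $U_{22,n}+U_{23,n}$ vanishes does not by itself yield asymptotic independence: these are not jointly Gaussian, since $U_{22,n}$ is quadratic in $\bm Z^{(2)}$ and $U_{13,n,M}$ involves the non-Gaussian $\bm X^{(1)}$. The paper instead conditions on $(\bm X_n^{(1)},\bm Z_n^{(2)})$, making both components affine in $\bm Z_n^{(3)}$ and hence \emph{conditionally} jointly Gaussian. One then shows the conditional covariance $\tfrac{1}{|E(G_n)|}(\bm Z_n^{(2)})^\top A_{23}A_{31}\bm X_{n,M}^{(1)}\stackrel{P}{\to}0$ (this is where $K_1$ enters, via a dominated-convergence argument controlling $K_1\sum_{v}d_v^2\ind\{d_v\le 2|E(G_n)|/K_1\}/|E(G_n)|^2$), and crucially uses that the conditional mean and variance of $U_{22,n}+U_{23,n}$ depend only on $\bm Z^{(2)}$ while those of $U_{13,n,M}$ depend only on $\bm X^{(1)}$, so they are mutually independent. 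Without this last independence the factorization of the conditional characteristic function does not survive integrating out the conditioning, and the argument does not close. Finally, Lemma \ref{asymp-ind} yields independence of characteristic functions; to return to the moment statement of the proposition you need uniform integrability of all powers, which the paper supplies via the mgf bound in Lemma \ref{l:mgf}.
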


\begin{proof}[Proof of Proposition \ref{p:momind}] 
	
	The first step in the proof is to show that $U_{33,n}$ is asymptotically independent of the triplet $(U_{13,n,M},U_{22,n},U_{23,n})$ in moments. This is shown in the following lemma which is proved in Section \ref{sec:momind1}.  
	
	\begin{lemma}\label{momind} For all non-negative integers $a,b,c,d$ the following holds: 
		\begin{align*}
			\lim_{K_1,K_2\to\infty}\limsup_{n\to\infty} \left|\Ex\left[U_{13,n,M}^{a}U_{22,n}^{b}U_{23,n}^{c}U_{33,n}^d \right]-\Ex\left[U_{13,n,M}^{a}U_{22,n}^{b}U_{23,n}^c \right]\Ex\left[U_{33,n}^{d}\right]\right|=0.
		\end{align*}
	\end{lemma}
	
	Next, we show that $U_{13,n,M}$ is asymptotically independent of $U_{22,n}+U_{23,n}$ using the Gaussian structure. 
	This is described in the following lemma which is proved in Section \ref{sec:indpf}.

	\begin{lemma}\label{asymp-ind} 
		For any $t_1, t_2\in \R$,	
		\begin{align*}
			\lim_{K_1\to\infty}\limsup_{n\to\infty}\left|\Ex[e^{\i  t_1U_{13,n,M}+i t_2(U_{22,n}+U_{23,n})}] -\Ex[e^{\i t_1 U_{13,n,M}}]\Ex[e^{\i  t_2(U_{22,n}+U_{23,n})}] \right|=0.
		\end{align*} 
	\end{lemma}

	Given the above two lemmas, the proof of Proposition \ref{p:momind} can be completed easily. Recall that $$U_{13,n,M}:=\frac1{2\sqrt{\eg}}\bm{X}_{n,M}^{(1)}A_{13}(\bm{Z}_n^{(3)})^\top.$$ For each fixed $M$, the truncated random variables $\{X_{u, M}\}_{1 \leq u \leq n}$ and $\{Z_u\}_{1 \leq u \leq n}$ are centered, bounded, and hence, sub-Gaussian. Therefore,  the matrix $\frac1{2\sqrt{\eg}}A_{13}$ satisfies the condition \eqref{mgf-cond} in Appendix \ref{sec:appendix}. Similarly, since $U_{22,n},U_{23,n},$ and $U_{33,n}$ involve Gaussian random variables, the corresponding matrices satisfies \eqref{mgf-cond}.  Thus, by Lemma \ref{l:mgf} there exists $\delta>0$ such that 
	\begin{align}\label{eq:momfin}
		\sup_{|t|\le \delta} \sup_{K_1,K_2\ge 1} \sup_{n\ge 1} \left\{\Ex[e^{tU_{13,n,M}}]+\Ex[e^{tU_{22,n}}]+\Ex[e^{tU_{23,n}}]+\Ex[e^{tU_{33,n}}]\right\}<\infty.
	\end{align}
	{Now, let us denote 
	$$\mathcal R_{n,K_1}:= U_{13,n,M} \quad \text{and} \quad \mathcal S_{n,K_1} := U_{22,n}+U_{23,n} . $$
	By \eqref{eq:momfin}, we have  $\{(\mathcal R_{n,K_1}, \mathcal S_{n,K_1})\}_{n,K_1\ge 1}$ is a tight sequence of random variables. Therefore, passing through a double subsequence (in both $n,K_1$) we may assume $(\mathcal R_{n,K_1}, \mathcal S_{n,K_1})$ converges in distribution, that is, there exists random variables $\mathcal R, \mathcal S$ on $\mathbb R$ such that $$(\mathcal R_{n,K_1}, \mathcal S_{n,K_1})\stackrel{D}{\to} (\mathcal R, \mathcal S).$$ Now, Lemma \ref{asymp-ind} implies $\mathcal R$ and $\mathcal S$ must be independent.  Therefore, by uniform integrability, for $a, b \geq 1$ integers, 
$$\lim_{K_1\to\infty} \lim_{n\to\infty}\Ex[\mathcal R_{n,K_1}^a \mathcal S_{n,K_1}^b]=\Ex [\mathcal R^a \mathcal S^b]=\Ex[\mathcal R^a]\Ex[\mathcal S^b]=\lim_{K_1\to\infty}\lim_{n\to\infty} \Ex[\mathcal R_{n,K_1}^a]\Ex[\mathcal S_{n,K_1}^b],$$
and hence, 
\begin{align}\label{eq:mmln}
 \lim_{K_1\to\infty}	\lim_{n\to \infty}\left|\Ex[\mathcal R_{n,K_1}^a \mathcal S_{n,K_1}^b]-\Ex[\mathcal R_{n,K_1}^a]\Ex[\mathcal S_{n,K_1}^b]\right| = 0.
	\end{align}
Combining \eqref{eq:mmln} with Lemma \ref{momind} and binomial theorem, the result in Proposition \ref{p:momind} follows. }
\end{proof}

\subsubsection{Proof of Lemma \ref{momind}} \label{sec:momind1}

In the method of moment calculations of Lemma \ref{momind}, we will need to invoke various results from extremal graph theory.  Towards this, we begin by recalling some basic definitions. 

\medskip

\noindent{\it Extremal Graph Theory Background}: 
For any graph $H$, denote the neighborhood of a set $S\subseteq V(H)$ by $N_H(S)=\{v\in V(H): \exists u \in S \text{ and } (u, v)\in E(H) \}$. 
Next, given two simple graphs $G=(V(G), E(G))$ and $H=(V(H), E(H))$, denote by $N(H, G)$ the number of 
isomorphic copies of $H$ in $G$, that is, 
$$N(H, G)=\sum_{S\subset E(G):|S|=|E(H)|}\pmb 1\{G[S]\cong H\},$$ 
where the sum is over subsets $S$ of $E(G)$ with $|S|=|E(H)|$, and $G[S]$ is the subgraph of $G$ induced by the edges of $S$. 

One of the fundamental problems in extremal graph theory is to estimate, for any fixed graph $H$, the quantity $N(H, G)$ over the class of graphs $G$ with a specified number of edges. More formally, for a positive integer $\ell\geq |E(H)|$, define $$N(H, \ell):=\sup_{G: |E(G)|=\ell}N(H, G).$$
For the complete graph $K_h$, Erd\H os \cite{erdoscompletesubgraphs} determined the asymptotic behavior of $N(K_h, \ell)$ as $\ell$ tends to infinity,  which is also a special case of the celebrated Kruskal-Katona theorem. For a general graph $H$ this problem was settled by Alon \cite{alon81} and later extended to hypergraphs by Friedgut and Kahn \cite{hypergraphcopies}. To explain this result we need the following definition: 

\begin{definition}\cite{schrijver} \label{gammah}
	For any graph $H = ( V(H), E(H) )$ define its  {\it fractional stable number} $\gamma(H)$ as: 
	\begin{equation}\label{eq:gamma}
		\gamma(H)=\max_{\substack{\phi\in [0,1]^{V(H)}, \\ \phi(x)+\phi(y)\le 1 \text{ for }(x,y)\in E(H)}} \sum_{v\in V(H)} \phi(v) , 
	\end{equation}
	where $[0,1]^{V(H)}$ is the collection of all functions $\phi: V(H)\rightarrow [0, 1]$.   Moreover, the polytope $\mathscr{P}(H)$ defined by the set of constraint $$\{\phi\in [0,1]^{V(H)}: \phi(u) + \phi(v) \leq 1~\textrm{for all}~ \{u,v\} \in E(H)\}$$ is called the \textit{fractional stable set polytope} of the graph $H$. 
\end{definition}

We now state Alon's result \cite{alon81} result (in the language of \cite{hypergraphcopies}) in the following theorem for ease of referencing:

\begin{theorem}[\cite{alon81,hypergraphcopies}]For a fixed graph $H$, there exist two positive constants $C_1=C_1(H)$ and $C_2=C_2(H)$ such that for all $\ell\geq |E(H)|$,
	\begin{equation*}
		C_1\ell^{\gamma(H)}\leq N(H, \ell) \leq C_2\ell^{\gamma(H)},
	\end{equation*}
	where $\gamma(H)$ is {\it fractional stable number} of $H$ as in Definition \ref{gammah}. 
	\label{thm:alonexponent}
\end{theorem}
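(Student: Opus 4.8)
The plan is to prove the two inequalities separately: the lower bound $N(H,\ell)\gtrsim_H \ell^{\gamma(H)}$ by an explicit blow-up construction built from an optimal fractional stable set of $H$, and the upper bound $N(H,\ell)\lesssim_H \ell^{\gamma(H)}$ by an entropy (Shearer) argument applied to the fractional edge cover of $H$ that is LP-dual to that stable set. (This is not Alon's original inductive argument, but the now-standard entropy route, which also gives the Friedgut--Kahn hypergraph extension.) Throughout I may assume $H$ has no isolated vertices: a vertex of $H$ with no incident edge would force $N(H,\ell)=0$, since $G[S]$ is by definition induced by a set of edges and hence has no isolated vertices. I may also restrict to $\ell$ larger than a threshold depending only on $H$, folding the finitely many remaining values of $\ell$ into $C_1$ and $C_2$ (note $N(H,\ell)\ge 1$ for every $\ell\ge |E(H)|$, witnessed by $G=H$).

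\emph{Lower bound.} Let $\phi\in[0,1]^{V(H)}$ attain the maximum in \eqref{eq:gamma}, so $\sum_{v}\phi(v)=\gamma(H)$ and $\phi(u)+\phi(v)\le 1$ for every $\{u,v\}\in E(H)$. Fix a constant $c=c(H)\in(0,1)$ with $c^2|E(H)|\le 1$ and, for each $v$, put $n_v:=\lfloor c\,\ell^{\phi(v)}\rfloor$ (so $n_v\ge 1$ once $\ell$ is large). Build $G$ by replacing each $v\in V(H)$ by a set $S_v$ of $n_v$ vertices, the $S_v$ pairwise disjoint, and inserting a complete bipartite graph between $S_u$ and $S_v$ whenever $\{u,v\}\in E(H)$; no edges are placed within a blob or between non-adjacent blobs. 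Then $|E(G)|=\sum_{\{u,v\}\in E(H)}n_u n_v\le c^2\sum_{\{u,v\}\in E(H)}\ell^{\phi(u)+\phi(v)}\le c^2|E(H)|\,\ell\le\ell$, and adjusting $G$ to have exactly $\ell$ edges if desired is harmless since $N(H,\cdot)$ is non-decreasing (pad with isolated edges). Choosing one vertex $x_v\in S_v$ for each $v$ and taking the edge set $\{\{x_u,x_v\}:\{u,v\}\in E(H)\}$ produces a copy of $H$ in $G$ (by construction there are no extra induced edges, and every vertex of $H$ is non-isolated), and since the blobs are disjoint one recovers from such a copy which blob each chosen vertex lies in; hence distinct choices $(x_v)_v$ give distinct copies. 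Therefore $N(H,G)\ge\prod_v n_v\ge (c/2)^{|V(H)|}\ell^{\sum_v\phi(v)}=(c/2)^{|V(H)|}\ell^{\gamma(H)}$, giving the lower bound.

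\emph{Upper bound.} Fix $G$ with $\ell$ edges; if $G$ contains no labeled copy of $H$ there is nothing to prove, so let $\mathbf{X}=(X_v)_{v\in V(H)}$ be a uniformly random injective homomorphism $H\to G$. Because $H$ has no isolated vertices, such a homomorphism is determined, up to $\mathrm{Aut}(H)$, by its edge image, so $N(H,G)\le\#\{\text{injective homomorphisms }H\to G\}=2^{h(\mathbf{X})}$, where $h$ is the base-two Shannon entropy (the equality because $\mathbf{X}$ is uniform on a finite set). Now let $\{y_e\}_{e\in E(H)}$ be a minimum fractional edge cover of $H$, i.e.\ $y_e\ge 0$ and $\sum_{e\ni v}y_e\ge 1$ for all $v$, with $\sum_e y_e$ minimal; by LP duality this minimum equals $\gamma(H)$, since the LP in \eqref{eq:gamma} and the edge-cover LP are dual and both sides are feasible precisely because $H$ has no isolated vertices. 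The fractional form of Shearer's inequality, applied with the edges of $H$ covering $V(H)$ with weights $y_e$, gives $h(\mathbf{X})\le\sum_{e\in E(H)}y_e\,h(X_{u_e},X_{v_e})$, where $e=\{u_e,v_e\}$. For each $e$ the pair $(X_{u_e},X_{v_e})$ ranges over the at most $2\ell$ ordered pairs that are edges of $G$, so $h(X_{u_e},X_{v_e})\le\log_2(2\ell)$; hence $h(\mathbf{X})\le(\sum_e y_e)\log_2(2\ell)=\gamma(H)\log_2(2\ell)$ and $N(H,G)\le(2\ell)^{\gamma(H)}$. Taking the supremum over all such $G$ yields the upper bound with $C_2=2^{\gamma(H)}$.

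\emph{Main obstacle.} The lower bound is routine once one sees that $S_v$ should have size of order $\ell^{\phi(v)}$. The crux is the upper bound, and it has two ingredients that must be got right: first, identifying the exponent $\gamma(H)$ in \eqref{eq:gamma} with the fractional edge cover number of $H$ via LP duality (and the degenerate behaviour when $H$ has isolated vertices); second, invoking the fractional Shearer/Han entropy inequality, which is exactly the tool that converts the structural fact ``each edge of $H$ is mapped to one of the $\ell$ edges of $G$'' into the numerical bound $h(\mathbf{X})\le\gamma(H)\log_2(2\ell)$. The residual care is bookkeeping: passing cleanly between $N(H,G)$, the count of labeled copies, and $2^{h(\mathbf{X})}$. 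For the hypergraph generalization of Friedgut and Kahn one replaces ``edge'' by ``hyperedge'' and the fractional edge cover of the $2$-uniform $H$ by a fractional cover of $V(H)$ by the hyperedges of $H$, leaving the architecture of the argument unchanged.
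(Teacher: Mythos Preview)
The paper does not give its own proof of this statement; it is quoted from \cite{alon81,hypergraphcopies} and used as a black box in the moment estimates that follow. Your argument is correct and is precisely the Friedgut--Kahn proof: a blow-up built from an optimal fractional stable set for the lower bound, and Shearer's entropy inequality combined with the LP duality between the fractional stable number and the fractional edge-cover number for the upper bound. (Alon's original upper-bound argument in \cite{alon81} proceeds by induction on $|E(H)|$ rather than via entropy, as you already acknowledge.) Your handling of isolated vertices and of the bookkeeping between $N(H,G)$, labeled copies, and $2^{h(\mathbf{X})}$ is accurate.

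One trivial correction in the lower bound: with $c<1$ and $n_v=\lfloor c\,\ell^{\phi(v)}\rfloor$, any vertex $v$ with $\phi(v)=0$ in the optimal solution (which does occur, e.g.\ the centre vertex of a path $K_{1,2}$) yields $n_v=0$ for every $\ell$, so the parenthetical ``$n_v\ge 1$ once $\ell$ is large'' fails. Replace the floor by a ceiling, or set $n_v=\max(1,\lfloor c\,\ell^{\phi(v)}\rfloor)$; the edge-count and copy-count estimates go through with only cosmetic changes to the constants.
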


In the proof of Lemma \ref{momind} we will often need to deal with counts of multi-graphs (instead of simple graphs) containing an edge in $E_{3,3}$.  A  {\it multi-graph} $H$ is a graph with no self-loops, where there might be more than one edge between two vertices.  In this case, the set of edges $E(H)$ is a multi-set where the edges are counted with their multiplicities.

\medskip

\noindent\textbf{\textit{Representing the Joint Moments in Terms of Multigraph Counts}}: With the above results we now proceed with the proof of Lemma \ref{momind}. To begin with note that the result in Lemma \ref{momind} is trivial if $d=0$. Hence, we assume that $d$ is positive. Throughout this proof we will drop the subscript $n$ from $U_{22,n}, U_{23,n}, U_{33,n}$ and the subscripts $n$ and $M$ from $U_{13, n, M}$.

Now, let $E_{s, t}$ denote the set of all edges in $G_n$ with one vertex in $V_s$ and other in $V_t$, for $1 \leq s, t \leq 3$ (recall the definitions of $V_1, V_2, V_3$ from \eqref{eq:V123}). Note that
\begin{align}\label{eq:iden}
	\Ex\left[U_{13}^{a}U_{22}^{b}U_{23}^{c} U_{33}^d \right]-\Ex\left[U_{13}^{a}U_{22}^{b}U_{23}^c\right]\Ex\left[U_{33}^{d}\right]  
	& = \frac1{{\eg}^{\frac{a+b+c+d}2}}\sum_{\bm u} B_{\bm u}.
\end{align}
where the sum is over all tuples of the form:
\begin{align}\label{eq:uabcd}
	\bm u & := \left(\{u_1,v_1\},\ldots,\{u_a,v_a\},\{u_1',v_1'\},\ldots,\{u_b',v_b'\},\right. \nonumber \\ 
	& \hspace{5cm}\left.\{u_1'',v_1''\},\ldots,\{u_c'',v_c''\},\{u_1''',v_1'''\},\ldots,\{u_d''',v_d'''\}\right),
\end{align}
with $\{u_1,v_1\},\ldots,\{u_a, v_a\} \in E_{1,3}$, $\{u_1',v_1'\},\ldots,\{u_b',v_b'\} \in E_{2,2}$, $\{u_1'',v_1''\},\ldots,\{u_c'',v_c''\} \in E_{2,3}$, and $\{u_1''',v_1'''\}, \ldots,\{u_d''',v_d'''\} \in E_{3,3}$; and 
\begin{align*}
	B_{\bm u} & :=\mathbb{E}\left[\prod_{j=1}^a X_{u_j, M} Z_{v_j} \prod_{j=1}^b Z_{u_j'} Z_{v_j'}\prod_{j=1}^c Z_{u_j''} Z_{v_j''}\prod_{j=1}^d Z_{u_j'''} Z_{v_j'''}\right] \\ & \hspace{4cm} - \mathbb{E}\left[\prod_{j=1}^a X_{u_j, M} Z_{v_j} \prod_{j=1}^b  Z_{u_j'} Z_{v_j'}\prod_{j=1}^c Z_{u_j''} Z_{v_j''}\right]\mathbb{E}\left[\prod_{j=1}^d Z_{u_j'''} Z_{v_j'''}\right] . 
\end{align*}
{Let $\mathcal{U}_{a, b, c, d}$ be the collection of all tuples $\bm u$ as in \eqref{eq:uabcd}, which arises in the sum \eqref{eq:iden}, i.e.~$\mathcal{U}_{a,b,c,d}$ is the collection of $a+b+c+d$ edge tuples with each edge from $E(G_n)$, such that
\begin{itemize}
\item
the number of edges in $\bm u$ which are in $E_{1,3}$ is $a$;
\item
the number of edges in $\bm u$ which are in $E_{2,2}$ is $b$;
\item
the number of edges in $\bm u$ which are in $E_{2,3}$ is $c$;
\item
the number of edges in $\bm u$ which are in $E_{3,3}$ is $d$.
\end{itemize}
 Let $\mathcal{U}_{a,b,c,d}^{(0)}\subset \mathcal{U}_{a,b,c,d}$ be the set of all $\bm u$ such that $B_{\bm u}\ne 0$.
Note, since all moments of $X_{1, M}$ are bounded, $|B_{\bm u}| \lesssim_{a, b, c, d, M} 1$. Therefore, \eqref{eq:iden} can be bounded as: 
\begin{align}\label{eq:U}
	\left|\Ex\left[U_{13}^{a}U_{22}^{b}U_{23}^{c} U_{33}^d \right]- \Ex\left[U_{13}^{a}U_{22}^{b}U_{23}^c\right]\Ex\left[U_{33}^{d}\right]  \right| & \lesssim_{a, b, c, d, M} \frac1{|E(G_n)|^{\frac{a+b+c+d}2}}|\mathcal{U}^{(0)}_{a, b, c, d}| .  
\end{align}
For every $\bm u \in \mathcal{U}_{a, b, c, d}$, let $\mathcal{H}({\bm u})$ denote the multigraph formed by the edges in $\bm u$. Note that the multigraph $\mathcal{H}({\bm u})$ has $a+b+c+d$ multi-edges, for each $\bm u \in \mathcal{U}_{a, b, c, d}$. As in simple graphs, we define the degree $d_v(H)$ of a vertex $v$ in a multigraph $H=(V(H), E(H))$ as the number of multi-edges of $H$ incident on the vertex $v$. Also, let $d_{\min}(H) := \min\{d_v(H): v \in V(H)\}$ denote the minimum degree of $H$. 
\begin{lemma}\label{lm:Hgamma} For $H=\mathcal{H}(\bm u)$ with $\bm u \in \mathcal{U}_{a, b, c, d}$, with connected components $H_1, H_2, \ldots, H_\nu$, the following hold: 
	\begin{enumerate}
		\item[$(1)$]  For all $1 \leq i \leq \nu$, $\gamma(H_i) \geq |V(H_i)|/2$. 
		\item[$(2)$] If  ${\bm u}\in \mathcal{U}^{(0)}_{a, b, c, d}$, then $d_{\min}(H)\ge 2$.
		\item[$(3)$] If  $d_{\min}(H) \geq 2$, then  $|V(H_i)| \leq |E(H_i)|$ for all $i\in [\nu]$. Moreover, if there exists a vertex $v'\in V(H_i)$ such that $d_{v'}(H_i) \ge 3$, then $|V(H_i)|< |E(H_i)|$. 
		\item[$(4)$] If  $d_{\min}(H) \geq 2$, then for all $1 \leq i \leq \nu$ we have $\gamma(H_i) \le |E(H_i)|/2 $. Moreover, if there exists a vertex $v'\in V(H_i)$ such that $d_{v'}(H_i) \ge 3$ and $\varphi(v') > 0$, where $\varphi$ is an optimal solution to \eqref{eq:gamma}, then $|\gamma(H_i)|< |E(H_i)|/2$. 
		\item[$(5)$] If ${\bm u}\in \mathcal{U}^{(0)}_{a, b, c, d}$, then $H_S$ contains a $2$-star as a subgraph with one edge in $E_{3,3}$. 
	\end{enumerate}
\end{lemma}
\begin{proof} The proof of (1) follows by taking $\phi\equiv \frac12$ in \eqref{eq:gamma} for the graph $H_i$. 
	\\
	For (2), since $X_{1, M}$ and $Z \sim N(0, 1)$ are both mean zero random variables, $B_{\bm u}$ is non zero only when each vertex index in $\bm u$ appears at least twice, and so $d_{\min}(H)\ge 2$.
\\	
	Proceeding to (3), we have $d_{\min}(H_i) \geq d_{\min}(H) \geq 2$, for $1 \leq i \leq \nu$. Hence, 
	$$2 |V(H_i)| \leq |V(H_i)| d_{\min}(H_i) \leq \sum_{v \in V(H_i)} d_v(H_i) = 2|E(H_i)|,$$
	that is, $|V(H_i)| \leq |E(H_i)|$. Now, suppose there exists $v' \in V(H_i)$ such that $d_v(H_i) \ge 3$, then 
	$$2 (|V(H_i)|-1) + 3  \leq \sum_{v \in V(H_i)} d_v(H_i) = 2|E(H_i)| ,$$
	that is, $|V(H_i)| < |E(H_i)$. 
	\\	
  For verifying (4), note that if $d_{\min}(H_i)\geq 2$, then for any function $\phi \in [0, 1]^{V(H_i)}$ with $\phi(x) + \phi(y) \leq 1$, for $(x, y) \in E(H_i)$,  
	$$\sum_{x\in V(H_i)} \phi(x)\leq \frac{1}{d_{\min}(H_i)}\sum_{(x, y)\in E(H_i)}\{\phi(x)+\phi(y)\}\leq\frac{1}{2}|E(H_i)|.$$
	which gives $\gamma(H_i)\leq \frac{1}{2}|E(H_i)|$. Now, suppose there exists $v' \in V(H_i)$ such that $d_{v'}(H_i) \geq 3$ and $\varphi(v')\ne 0$, where $\varphi$ is an optimal solution to \eqref{eq:gamma}. Then
	\begin{align*}
		|E(H_i)|  \geq  \sum_{(x, y)\in E(H_i)}\{\varphi(x)+\varphi(y)\}=\sum_{x\in V(H_i)} d_x(H_i)\varphi(x) & \geq  3\varphi(v')+2\sum_{x\in V(H_i)\backslash \{v'\}}\varphi(x) \nonumber \\ 
		& =2\gamma(H_i)+\varphi(v'),
	\end{align*}
	and the result in (3) follows since $\varphi(v')>0$. 
\\
	Finally, note that if $H_S$ does not contain a 2-star with an edge in $E_{3, 3}$, then the edge set of each connected component of $H_S$ must be either totally contained in $E_{3,3}$ or totally contained in $E_{3,3}^c$, which implies $B_{\bm u} = 0$. 
\end{proof} 
We now define three disjoint subsets of $\mathcal{U}_{a, b, c, d}$:
\begin{enumerate} 
	\item $\mathcal{U}_{a, b, c, d}^{(1)}$ is the collection of all 
	$\bm u \in \mathcal{U}_{a, b, c, d}$ such that $H = \mathcal{H}(\bm u)$ has connected components $H_1, H_2, \ldots, H_\nu$ which satisfy $\gamma(H_i) \leq |E(H_i)|/2$ for all $1 \leq i \leq \nu$ and $\gamma(H_j) < |E(H_j)|/2$ for some $1 \leq j \leq \nu$. 
	\item $\mathcal{U}_{a, b, c, d}^{(2)}$ is the collection of all 
	$\bm u \in \mathcal{U}_{a, b, c, d}$ such that $H = \mathcal{H}(\bm u)$ has connected components $H_1, H_2, \ldots, H_\nu$ which satisfiy $\gamma(H_i) = |E(H_i)|/2$ for all $1 \leq i \leq \nu$, and there exists $1 \leq j \leq \nu$ such that $\gamma(H_j) = |E(H_j)|/2= |V(H_j)|/2$ and $(H_j)_S$ has a 2-star with one edge in $E_{3, 3}$. 
	\item $\mathcal{U}_{a, b, c, d}^{(3)}$ is the collection of all $\bm u \in \mathcal{U}_{a, b, c, d}$ such that $H = \mathcal{H}(\bm u)$, has connected components $H_1, H_2, \ldots, H_\nu$ which satisfy $\gamma(H_i) = |E(H_i)|/2$ for all $1 \leq i \leq \nu$ and there exists $1 \leq j \leq \nu$ such that $\gamma(H_j) = |E(H_j)|/2 > |V(H_j)|/2$ and $(H_j)_S$ has a 2-star with one edge in $E_{3, 3}$. 
\end{enumerate}
Note that  $\mathcal{U}_{a, b, c, d}^{(1)}$, $\mathcal{U}_{a, b, c, d}^{(2)}$, and $\mathcal{U}_{a, b, c, d}^{(3)}$ are disjoint events which cover all possibilities in $\mathcal{U}^{(0)}_{a, b, c, d}$ by Lemma \ref{lm:Hgamma}.  
\\
 For an unlabeled multi-graph $H$ define
$M(H,G_n)$ to be the cardinality of the set of all $\bm u\in \mathcal{U}_{a,b,c,d}$ such that the multigraph $\mathcal{H}(\bm u)$ is isomorphic to $H$, and  has a 2-star with one edge in $E_{3,3}$.
It is easy to see that $M(H, G)\lesssim_{a,b,c,d} N(H_S, G)$, where $H_S$ is the simple graph obtained from $H$ by replacing the edges between the vertices which occur more than once by a single edge. Moreover, the definition of the fractional stable number in \eqref{eq:gamma} extends verbatim to any multi-graph $H$, by setting  
$\gamma(H):=\gamma(H_S)$. Theorem \ref{thm:alonexponent} then implies,  
\begin{align}\label{eq:alon}
M(H, G)\lesssim_{a,b,c,d} N(H_S, G) \lesssim_{a,b,c,d}  |E(G)|^{\gamma(H_S)}=|E(G)|^{\gamma(H)}.
\end{align}
Finally, denote by $\mathcal H_{a, b, c, d}$ the collection of all unlabelled multigraphs $H = (V(H), E(H)) $ with $a+b+c+d$ edges, which has a cardinality free of $n$ (in fact it depends only on $a+b+c+d$).
\begin{lemma}\label{lm:H1} $|\mathcal{U}_{a, b, c, d}^{(1)}\cap \mathcal{U}_{a,b,c,d}^{(0)}| = o\left(|E(G_n)|^{(a+b+c+d)/2}\right)$. 
\end{lemma}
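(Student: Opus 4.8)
The goal is to bound $|\mathcal{U}_{a,b,c,d}^{(1)}|$, the number of valid tuples $\bm u$ whose multigraph $H = H(\bm u)$ satisfies the strict inequality $\gamma(H) < |E(H)|/2 = (a+b+c+d)/2$. The plan is to group the tuples in $\mathcal{U}_{a,b,c,d}^{(1)}$ according to the isomorphism type of the underlying simple graph $H_S$, of which there are only finitely many (at most $C_{a,b,c,d}$, a constant depending only on $a,b,c,d$). So it suffices to fix one such simple graph $H_S$ and bound the number of tuples $\bm u$ in $\mathcal{U}_{a,b,c,d}^{(1)}$ realizing it; summing the $O(1)$-many contributions preserves the $o(|E(G_n)|^{(a+b+c+d)/2})$ bound.

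Fixing an isomorphism type $H_S$ with $\gamma(H_S) = \gamma(H) < (a+b+c+d)/2$, I would count the number of ways to embed a copy of $H_S$ into $G_n$ (which upper-bounds the number of tuples $\bm u$ giving that type, up to a constant accounting for the number of ways to distribute multiplicities and orderings on the edges). By Theorem \ref{thm:alonexponent} and the remark following it that $M(H, G) \lesssim_H N(H_S, G) \lesssim_H |E(G_n)|^{\gamma(H_S)}$, the number of such embeddings is at most $C |E(G_n)|^{\gamma(H_S)}$. Since $\gamma(H_S) < (a+b+c+d)/2$ strictly, and $|E(G_n)| \to \infty$ by \eqref{eq:dense}, we get $|E(G_n)|^{\gamma(H_S)} = o(|E(G_n)|^{(a+b+c+d)/2})$. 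Summing over the finitely many types yields the claim.

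The only subtlety I would need to handle carefully is the bookkeeping relating tuples $\bm u$ to subgraph copies: a tuple $\bm u$ records an ordered list of $a+b+c+d$ edges (with repetition allowed and with a prescribed assignment of each edge to one of the four classes $E_{1,3}, E_{2,2}, E_{2,3}, E_{3,3}$), whereas $M(H, G_n)$ already counts ordered edge-lists realizing the multigraph $H$. So in fact $|\mathcal{U}_{a,b,c,d}^{(1)}|$ restricted to a fixed multigraph type $H$ is exactly $M(H, G_n)$ times a bounded combinatorial factor (coming from the choice of which vertices lie in which $V_s$, which only reduces the count), and the bound $M(H, G_n) \lesssim_H |E(G_n)|^{\gamma(H_S)}$ applies directly. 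I expect the main obstacle to be purely notational—making the correspondence between tuples and multigraph copies precise enough that the constants are genuinely independent of $n$—rather than anything deep; the mathematical content is entirely carried by Alon's theorem together with the defining strict inequality of $\mathcal{U}_{a,b,c,d}^{(1)}$.
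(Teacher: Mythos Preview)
Your proposal is correct and is essentially identical to the paper's proof: group the tuples by the (finitely many) isomorphism types of the associated multigraph, apply Alon's theorem to bound $M(H,G_n)\lesssim_H |E(G_n)|^{\gamma(H_S)}$ for each type, and use the defining strict inequality $\gamma(H)<(a+b+c+d)/2$ together with $|E(G_n)|\to\infty$. The only cosmetic difference is that the paper indexes the sum over multigraph types rather than simple-graph types, but since $\gamma(H)=\gamma(H_S)$ this is immaterial.
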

\begin{proof} For any $H = \mathcal{H}(\bm u)$ with $\bm u \in \mathcal{U}_{a, b, c, d}^{(1)}\cap \mathcal{U}_{a,b,c,d}^{(0)}$, using definition of $\mathcal{U}^{(1)}_{a,b,c,d}$ we have
$\gamma(H) = \sum_{i=1}^\nu \gamma(H_i) < \sum_{i=1}^\nu |E(H_i)|/2 = |E(H)|/2 = (a+b+c+d)/2$. Then by \eqref{eq:alon}, 
	$$M(H, G_n) \lesssim_{a, b, c, d}  |E(G_n)|^{\gamma(H)} = o\left(|E(G_n)|^{(a+b+c+d)/2}\right).$$ 
	 Then, with $\mathcal{H}_{a,b,c,d}$ as introduced before the lemma, we have
	$$|\mathcal{U}_{a, b, c, d}^{(1)}\cap \mathcal{U}_{a,b,c,d}^{(0)}| \leq \sum_{H \in \mathcal H_{a, b, c, d}} M(H, G_n) \bm 1\{H \cong \mathcal{H}(\bm u), \bm u \in \mathcal{U}_{a, b, c, d}^{(1)}\cap \mathcal{U}_{a,b,c,d}^{(0)}\} = o\left(|E(G_n)|^{(a+b+c+d)/2}\right),$$
	since $|\mathcal H_{a, b, c, d}| \lesssim_{a, b, c , d} 1$. 
\end{proof}
\begin{lemma}\label{lm:H2} $|\mathcal{U}_{a, b, c, d}^{(2)}\cap \mathcal{U}^{(0)}_{a, b, c, d}| \lesssim_{a, b, c, d}\frac{1}{K_2} |E(G_n)|^{(a+b+c+d)/2}$, where $K_2$ is as in \eqref{eq:V123}.  
\end{lemma}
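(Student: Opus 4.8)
The plan is to turn the bound into a count of coloured‑multigraph copies, use that $\mathcal{U}^{(2)}_{a,b,c,d}$ consists of unions of cycles one of which is ``mixed'', and harvest the gain $1/K_2$ from the degree bound on the vertices of $V_3$. First I would record the structure of $H=H(\bm u)$ for $\bm u\in\mathcal{U}^{(2)}_{a,b,c,d}$ (the case $a+b+c=0$ or $d=0$ being trivial). By Lemma \ref{lm:Hgamma}(2) we have $d_{\min}(H)\ge 2$, and since $\bm u\in\mathcal{U}^{(2)}_{a,b,c,d}$ forces $|V(H)|=|E(H)|$, the multigraph $H$ is $2$-regular, i.e.\ a vertex‑disjoint union of multigraph cycles. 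By Lemma \ref{lm:Hgamma}(4) some component $D$ of $H$ carries an $E_{3,3}$ edge meeting a non‑$E_{3,3}$ edge at a vertex $v^{*}$; as every $G_n$-edge carries one well‑defined type, the two parallel edges of a double edge carry the same type, so $D$ must be an honest cycle $C_{\ell}$ with $\ell\ge 3$, and $v^{*}$ together with the other endpoint of its $E_{3,3}$ edge both lie in $V_3$, hence have label $>\lfloor K_2\sqrt{\eg}\rfloor$ and therefore degree $\le 2\sqrt{\eg}/K_2$ in $G_n$ by the averaging bound \eqref{eq:Kdegree}. Finally, each $\bm u$ is determined, up to $O_{a,b,c,d}(1)$ choices, by the isomorphism type of the coloured cycle‑union $H(\bm u)$ — vertices tagged by which of $V_1,V_2,V_3$ they lie in, edges by which of $E_{1,3},E_{2,2},E_{2,3},E_{3,3}$ they belong to — so it suffices to bound, for each fixed type, the number of colour‑respecting copies in $G_n$ by $\lesssim_{a,b,c,d}K_2^{-1}\eg^{(a+b+c+d)/2}$; this parallels the proof of Lemma \ref{lm:H1}, with the $o(1)$ there replaced by the uniform factor $K_2^{-1}$.

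For a fixed type, bound the number of colour‑respecting copies by the product over cycle components of the number of colour‑respecting copies of each one (dropping disjointness only increases the count). A component $C\ne D$ with $r$ edges has $\gamma(C_r)=r/2$, so Theorem \ref{thm:alonexponent} gives $\lesssim_{r}\eg^{r/2}$ copies — the ``critical'' rate. For the mixed cycle $D=C_{\ell}$ the target is the strictly sub‑critical bound $\lesssim_{\ell}K_2^{-1}\eg^{\ell/2}$; granting it, multiplying over components gives $\prod_{C\ne D}\eg^{r_C/2}\cdot K_2^{-1}\eg^{\ell/2}=K_2^{-1}\eg^{(a+b+c+d)/2}$, and summing over the $O_{a,b,c,d}(1)$ types proves the lemma. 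To get the mixed‑cycle bound one reveals the $\ell$ vertices of $D$ one at a time: start from the $E_{3,3}$ edge $\{p,q\}$ (at most $2|E_{3,3}|\le 2\eg$ ordered choices, with $p,q\in V_3$ of degree $\le 2\sqrt{\eg}/K_2$), then walk around $D$ revealing each new vertex either from an already‑revealed cycle‑neighbour (cost $=$ that neighbour's degree) or directly from its prescribed class, whichever is smaller, closing the cycle by a single indicator; one uses $d_v\le 2\sqrt{\eg}/K_2$ for $v\in V_3$, $|V_1|\le K_1$ for $V_1$‑vertices, $|V_2|\le K_2\sqrt{\eg}$ for $V_2$‑vertices, and the elementary estimate $(A(G_n)^{k})_{xy}\le(2\eg)^{k/2}$ (or Theorem \ref{thm:alonexponent}) for maximal sub‑paths of $D$ lying in a single class. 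The two $V_3$‑vertices forced by $\{p,q\}$ contribute two factors $\le 2\sqrt{\eg}/K_2$ to the product, and this is what produces the surplus $K_2^{-1}$ over the trivial bound $\eg^{\ell/2}$.

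The main obstacle is to make the $K_2^{-1}$ survive when $D$ passes through the medium class $V_2$: a $V_2$‑vertex can have degree as large as $\asymp\eg/K_1$, so revealing it from a $V_2$‑neighbour is wasteful. One must order the revelation so that every maximal $V_2$‑stretch of $D$ is anchored at its two $V_3$‑endpoints (degree $\le 2\sqrt{\eg}/K_2$), reveal the interior $V_2$‑vertices directly at cost $\le K_2\sqrt{\eg}$ — except for the $O(\sqrt{\eg})$ medium vertices of degree $\ge\sqrt{\eg}$, for which one instead uses that there are few of them — and verify that the savings from the $V_3$ anchors and from $|V_1|\le K_1$ always outweigh these costs, with matching care at the closure step so as not to spend a spurious extra $\sqrt{\eg}$. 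It is this uniform bookkeeping over all coloured types, rather than any single inequality, that is the delicate part of the argument.
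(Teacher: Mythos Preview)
Your overall decomposition matches the paper: $H(\bm u)$ is $2$-regular, hence a disjoint union of multigraph cycles and doubled edges, one cycle component $D$ carries the $E_{3,3}$ edge adjacent to a non-$E_{3,3}$ edge, and the other components contribute at the critical rate $\eg^{r/2}$ by Theorem~\ref{thm:alonexponent}. The only missing piece is the sub-critical bound $\lesssim_\ell K_2^{-1}\eg^{\ell/2}$ for the mixed cycle $D$.

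Here your ``walk around revealing vertices'' strategy has a genuine gap. Take $D=C_6$ with vertices $p,q,r,s,t,u$ in cyclic order, $\{p,q\}\in E_{3,3}$, and $r,s,t,u\in V_2$ (so $b=3,c=2,d=1$). After fixing $\{p,q\}$ (cost $\eg$) and revealing $r$ from $q$ and $u$ from $p$ (cost $2\sqrt{\eg}/K_2$ each), you must still reveal $s,t$. Your two proposed tools both fail: revealing $s,t$ from the class $V_2$ costs $(K_2\sqrt{\eg})^2=K_2^2\,\eg$, which exactly cancels the $K_2^{-2}$ you earned from the anchors, leaving you with $\eg^3$ and no surplus; while the walk-count bound $(A^3)_{ru}\le(2\eg)^{3/2}$ gives the wrong power of $\eg$ (total $\asymp K_2^{-2}\eg^{7/2}$). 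Longer $V_2$-stretches only make this worse. Your final paragraph concedes the bookkeeping is ``delicate'' without resolving it, and in fact no revelation ordering using only your listed tools achieves the bound here.

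The paper's argument (Observation~\ref{obs:C}) sidesteps the colour tracking entirely. After fixing $\{s_1,s_2\}\in E_{3,3}$ and using the $V_3$-degree bound once (odd $\ell$) or twice (even $\ell$) to reveal $s_3$ (and $s_{2L}$), it simply picks the remaining vertices two at a time as \emph{arbitrary} edges of $G_n$: each such edge $\{s_{2j},s_{2j+1}\}$ costs $\eg$ and reveals two vertices, matching the base rate $(\sqrt{\eg})^2$ regardless of which class those vertices lie in. For the $C_6$ above this gives $\eg\cdot(2\sqrt{\eg}/K_2)^2\cdot\eg\asymp K_2^{-2}\eg^3$. The insight you are missing is that one should not walk around the cycle at all once the $V_3$-anchor step is done; dropping the adjacency constraints and choosing free edges is what makes the class of the remaining vertices irrelevant.
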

\begin{proof}[Proof of Lemma \ref{lm:H2}] 
	For any $H = \mathcal{H}(\bm u)$ with $\bm u \in \mathcal{U}_{a, b, c, d}^{(2)}\cap \mathcal{U}_{a,b,c,d}^{(0)}$, consider the connected component $H_j$, for $1 \leq j \leq \nu$, such that $\gamma(H_j)=|E(H_j)|/2 = |V(H_j)|/2$ and $(H_j)_S$ has a 2-star with one edge in $E_{3, 3}$ (this exists by definition of $\mathcal{U}_{a, b, c, d}^{(2)}$).  Without loss of generality, suppose $j=1$. The following lemma shows that $H_1$ is a cycle or a doubled edge. 
	\begin{observation}
		Let $F=(V(F), E(F))$ be any connected multi-graph with $d_{\min}(F)\geq 2$ and $|V(F)|=|E(F)|$. Then $F$ is a cycle or a doubled edge. 
		\label{obs:cycle_edge_vertex}
	\end{observation}
	\begin{proof}
		Given the graph $F$, denote by $F_S$ the underlying simple graph. Since $F$ is connected, $|E(F_S)| \le |E(F)|$ and $|E(F_S)| \ge |V(F_S)|-1 = |V(F)|-1 = |E(F)|-1$.
		Hence, $|E(F_S)|=|E(F)|=|V(F)|$ or $|E(F_S)|=|E(F)|-1=|V(F)|-1$.
		If $|E(F_S)|=|V(F)|=|E(F)|$, then $F$ itself is a simple graph with  $d_{\min}(F)\geq 2$, which implies that $F$ is a cycle of length $|V(F)|$.
		On the other hand, if $|E(F_S)|=|V(F)|-1=|E(F)|-1$, then $F_S$ is a tree. Note that any tree has at least two degree 1 vertices. If any two of the degree 1 vertices in $F_S$ are not connected by an edge, then adding one extra (double) edge cannot add to both their degrees. Hence, all the degree 1 vertices must be adjacent. 
		This implies that $F_S$ is an isolated edge, and $F$ is a doubled edge.  
	\end{proof}
	Since $|E(H_1)| = |V(H_1)|$ and $d_{\mathrm{min}}(H_1) \geq 2$ (by Lemma \ref{lm:Hgamma} (2)), $H_1$ is a cycle or a doubled edge by Observation \ref{obs:cycle_edge_vertex}. By assumption $(H_1)_S$ contains a $2$-star with one edge in $E_{3,3}$, which implies,  $(H_1)_S$ is a cycle with at least one edge in $E_{3, 3}$. In the following observation we estimate the number of such cycles. 
	\begin{observation}\label{obs:C} 
	With $C_g$ denoting a cycle of length $g$, we have
\begin{align*}
	M(C_g, G_n) \lesssim_{g} \frac{1}{K_2}|E(G_n)|^{g/2}.
\end{align*}
\end{observation}
\begin{proof} To begin with suppose $g=2L+1$, for some $L \geq 1$, is odd.  Choose a cyclic enumeration $s_1,\ldots,s_{2L+1}, s_1$ of the vertices of $C_g$, such that the edge $\{s_1,s_2\}$ is in $E_{3,3}$. Clearly, $\{s_1,s_2\}$ can be chosen in at most $|E(G_n)|$ ways. Once this edge is fixed, the edge $\{s_2,s_3\}$ of $C_g$ has at most $\frac2{K_2}\sqrt{|E(G_n)|}$ choices, since $s_2 \in V_3$ and $\max_{v \in V_3} d_v \leq \frac2{K_2}\sqrt{|E(G_n)|}$ (recall the definition of the set $V_3$ from \eqref{eq:V123} and note that the vertices of $G_n$ are arranged in non-decreasing order of the degrees). Now, for each $2 \leq \ell \leq L$, the edge $\{s_{2\ell},s_{2\ell+1}\}$ has $|E(G_n)|$ choices. Once these edges are chosen the cycle is completely determined. Hence, 
$$M(C_g, G_n) \lesssim \frac1{K_2}|E(G_n)|^{L+\frac{1}{2}}.$$
Next, suppose $g = 2L$, for some $L \geq 2$, is even. Once again, fix a cyclic enumeration $s_1,\ldots,s_{2L}, s_1$ of the vertices of $C_g$, where the edge $\{s_1,s_2\}$ in $C_g$ is in $E_{3,3}$. As before, the edge $\{s_1,s_2\}$ can be chosen in $|E(G_n)|$ ways. Once this edge is chosen, the edges $\{s_2,s_3\}$ and $\{s_1,s_{2L}\}$ of $C_g$ can each be chosen in $\frac2{K_2}\sqrt{|E(G_n)|}$ ways, since $s_1,s_2\in V_3$. Now, for each $2\leq \ell \leq L-1$, the edge $\{s_{2\ell},s_{2\ell+1}\}$ of $ C_g$ has $|E(G_n)|$ choices. Once these edges are chosen the cycle is completely determined. Hence, 
$$M( C_g, G_n) \lesssim \frac1{K_2^2} |E(G_n)|^L.$$ 
This completes the proof of the observation.  
\end{proof}
To complete the proof of Lemma \ref{lm:H2} chose any $H = \mathcal{H}(\bm u)$, with $\bm u \in \mathcal U^{(2)}_{a, b, c, d}\cap \mathcal{U}^{(0)}_{a,b,c,d}$, with connected components $H_1, H_2, \ldots, H_\nu$, such that $H_1$ is a cycle with at least one edge mapped to $E_{3,3}$. 
	Then using  \eqref{eq:alon} and Lemma \ref{lm:Hgamma} (4) we have
	\begin{align*} 
	M(H, G_n) & \lesssim_{a, b, c, d} M(H_1, G_n)  \prod_{i=2}^\nu N\Big((H_i)_S,G_n\Big)\nonumber\\
	&\leq M(H_1, G_n) |E(G_n)|^{ \sum_{i=2}^ \nu \gamma(H_i)} \nonumber \\ 
	 & \leq M(H_1, G_n) |E(G_n)|^{ \frac{1}{2} \sum_{i=2}^ \nu |E(H_i)|} \nonumber \\ 
	 	 & \lesssim_{a, b, c, d} \frac{1}{K_2} |E(G_n)|^{ \frac{1}{2} \sum_{i=1}^ \nu |E(H_i)|}  = \frac{1}{K_2} |E(G_n)|^{(a+b+c+d)/2},
	\end{align*}
	where the last inequality uses Observation \ref{obs:C}.
	Hence, for $\mathcal H_{a, b, c, d}$ as defined before Lemma \ref{lm:H1}, we have
	$$|\mathcal{U}_{a, b, c, d}^{(2)}\cap \mathcal{U}_{a, b, c, d}^{(0)}| \leq \sum_{H \in \mathcal H_{a, b, c, d}} M(H, G_n) \bm 1\{H \cong \mathcal{H}(\bm u) , \bm u \in \mathcal{U}_{a, b, c, d}^{(2)}\cap  \mathcal{U}_{a, b, c, d}^{(0)}\} \lesssim_{a, b, c , d}  \frac{1}{K_2} |E(G_n)|^{(a+b+c+d)/2},$$
	since $|\mathcal H_{a, b, c, d}| \lesssim_{a, b, c , d} 1$.
	%
	%
	%
	%
	%
	%
	%
	%
	%
	%
	%
	%
	%
	%
\end{proof}
\begin{lemma}\label{lm:H3} 
	$|\mathcal{U}_{a, b, c, d}^{(3)}\cap \mathcal{U}^{(0)}_{a, b, c, d}| \lesssim_{a, b, c, d}\Big(o(1)+\frac{1}{K_2}\Big) |E(G_n)|^{(a+b+c+d)/2}$. 
\end{lemma}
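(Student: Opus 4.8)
The plan is to mirror the proof of Lemma~\ref{lm:H2}: determine the extremal structure of $H=H(\bm u)$ for $\bm u\in\mathcal U^{(3)}_{a,b,c,d}$, and then extract a factor $K_2^{-1}$ from the degree bound $\max_{v\in V_3}d_v\le\tfrac{2}{K_2}\sqrt{|E(G_n)|}$ on the low-degree vertices, just as Observation~\ref{obs:C} does. For such $\bm u$ one has $d_{\min}(H)\ge2$, $\gamma(H)=|E(H)|/2=(a+b+c+d)/2$ and $|V(H)|<|E(H)|$. Taking an optimal $\varphi$ in \eqref{eq:gamma} and summing the edge constraints gives $|E(H)|\ge\sum_{(x,y)\in E(H)}(\varphi(x)+\varphi(y))=\sum_v d_v(H)\varphi(v)\ge2\sum_v\varphi(v)=|E(H)|$, so every edge is tight ($\varphi(x)+\varphi(y)=1$) and $\varphi(v)>0$ forces $d_v(H)=2$. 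Since $|V(H)|<|E(H)|$, $H$ has a vertex of degree $\ge3$; propagating tightness shows that each connected component $F$ of $H$ is a cycle, an isolated doubled edge, or a bipartite multigraph with one side of the bipartition made up entirely of degree-two vertices (the high-degree vertices all lying on the other side), and in all cases $\gamma(F)=|E(F)|/2$.

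Next I would isolate the component $F^\star$ of $H$ that realises the $2$-star with an edge in $E_{3,3}$ guaranteed by Lemma~\ref{lm:Hgamma}(4). A doubled edge contributes only a single edge to $H_S$, so $F^\star$ is a cycle or a component of the bipartite type, and in either case $F^\star$ contains a vertex $v^\circ$ with $\tau(v^\circ)=3$ that has at least two distinct neighbours in $F^\star_S$ --- namely the centre of the $2$-star, or the leaf of the $2$-star lying on the $E_{3,3}$ edge (which is then itself the centre of another $2$-star). If $F^\star$ is a cycle, it has an edge in $E_{3,3}$ and Observation~\ref{obs:C} directly gives $N(F^\star,G_n)\lesssim_{a,b,c,d}K_2^{-1}|E(G_n)|^{|E(F^\star)|/2}$. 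If $F^\star$ is of the bipartite type, I would count its typed copies by choosing the image $w$ of $v^\circ$ among the $\le2|E(G_n)|$ non-isolated vertices of $G_n$ (forced into $V_3$ by $\tau(v^\circ)=3$), then placing the $\nu\ge2$ distinct neighbours of $v^\circ$ inside $N_{G_n}(w)$ --- at most $(\tfrac{2}{K_2}\sqrt{|E(G_n)|})^{\nu}$ ways --- and completing to a copy of $H-v^\circ$, bounded by Theorem~\ref{thm:alonexponent}; the factor $K_2^{-\nu}\le K_2^{-1}$ supplies the saving. Bounding the remaining components of $H$ by $\lesssim|E(G_n)|^{|E(F)|/2}$ via Theorem~\ref{thm:alonexponent} and summing $M(H,G_n)$ over the $\lesssim_{a,b,c,d}1$ isomorphism types of multigraphs with $a+b+c+d$ edges, as in Lemmas~\ref{lm:H1} and~\ref{lm:H2}, then gives $|\mathcal U^{(3)}_{a,b,c,d}|\lesssim_{a,b,c,d}K_2^{-1}|E(G_n)|^{(a+b+c+d)/2}$.

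The step I expect to be the main obstacle is the bookkeeping in the bipartite case: after deleting $v^\circ$ one must verify that the number of completions of $H-v^\circ$ with the $\nu$ neighbour-images pinned is $\lesssim|E(G_n)|^{(|E(H)|-d_{v^\circ})/2}$, i.e.\ that removing $v^\circ$ does not inflate the relevant fractional-stable exponent. This requires exploiting that each neighbour of $v^\circ$ loses only one unit of degree and that no isolated vertices are created --- or, if that fails, further refining the decomposition along the component structure of $H$ established in the first paragraph (e.g.\ treating separately the cases where the $E_{3,3}$ edge sits on a cycle passing through a $V_3$-vertex of higher degree).
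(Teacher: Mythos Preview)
Your structural analysis in the first paragraph is correct and quite clean: from the tightness of every edge constraint together with $\varphi(v)>0\Rightarrow d_v(H)=2$, each connected component is indeed a cycle, a doubled edge, or a bipartite multigraph with a degree-$2$ side $V_1(F)$ and an unconstrained side $V_0(F)$, and $\gamma(F)=|E(F)|/2$ throughout. When $F^\star$ is a cycle, Observation~\ref{obs:C} gives exactly the $K_2^{-1}$ saving, and the remaining components are handled by Theorem~\ref{thm:alonexponent}.

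The genuine gap is in the bipartite case for $F^\star$, and the obstacle you flag is not closed by the fixes you propose. The scheme ``place $w$ ($\lesssim|E(G_n)|$ choices), place its $\nu$ distinct neighbours inside $N_{G_n}(w)$, then complete $H-v^\circ$ via Theorem~\ref{thm:alonexponent}'' over-counts: Alon's bound is $|E(G_n)|^{\gamma(H-v^\circ)}$ with no credit for the pinned vertices, and $\gamma(H-v^\circ)$ need not drop below $|E(F^\star)|/2$. Concretely, if the $2$-star centre $c$ lies on the $V_0$-side and the $E_{3,3}$ edge $\{c,\ell_1\}$ is doubled, then removing $v^\circ=c$ isolates $\ell_1$, so already $\gamma(H-v^\circ)\ge |V_1(F^\star)|=|E(F^\star)|/2$ and your total overshoots the budget by a factor $|E(G_n)|$; switching to $v^\circ=\ell_1$ fails too, since $\ell_1$ then has only one distinct neighbour in $H_S$. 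The paper fills this gap with an ingredient your proposal lacks, namely Hall's marriage theorem: after first placing a short gadget through the $E_{3,3}$ edge (a $2$-star when that edge is doubled, a $3$-path when it is simple --- either way extracting the $K_2^{-1}$ factor), it verifies Hall's condition to produce a matching from the remaining $V_0$-vertices into the remaining $V_1$-vertices saturating $V_0$, and then enumerates the matching edges and the leftover $V_1$-vertices directly at cost $\lesssim|E(G_n)|$ each, landing on $K_2^{-1}|E(G_n)|^{|V_1|}=K_2^{-1}|E(G_n)|^{|E(F^\star)|/2}$ with no pinned-completion estimate required.
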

For proving Lemma \ref{lm:H3}, we first prove the following proposition, which deals with the special case when $H=\mathcal{H}(\bm u)$ is connected.
\begin{proposition}\label{prop:combine}
Let $H = \mathcal{H}(\bm u)$ for some $\bm u \in \mathcal{U}_{a, b, c, d}^{(3)}$,  such that $H$ is connected, and $d_{\min}(H)\ge 2$. Then we have 
$$\frac{M(H,G_n)}{|E(G_n)|^{|E(H)|}}=o(1)+O\left(\frac{1}{K_2}\right).$$
Further, this estimate is uniform in $\bm u$. 
\end{proposition}
\begin{proof}
Using the definition of $\mathcal{U}^{(3)}_{a,b,c,d}$, we have $\gamma(H)=|E(H)|/2 > |V(H)|/2$, and $H$ has a 2-star with one edge in $E_{3, 3}$.  
Further, $|V(H)| \geq 3$, since $H_S$ has a 2-star with one edge in $E_{3,3}$. Let $\varphi:V(H)\rightarrow [0, 1]$ an optimal solution of \eqref{eq:gamma}.  Then it is well-known that $\varphi \in \{0, \frac{1}{2}, 1\}$ \cite[Proposition 2.1]{fractionalindependentset}. Partition $V(H)=V_0(H)\cup V_{1/2}(H)\cup V_1(H)$, where $V_a(H)=\{v\in V(H): \varphi(v)=a\}$, for $a\in \{0, 1/2, 1\}$. We will need the following lemma about the structure of the subgraphs of $H$ induced by this partition of the vertex set. 
	\begin{lemma} \cite[Lemma 9]{alon81}, \cite[Lemma 4.2]{bbb_pd_sm} Let $H$ be a multi-graph with no isolated vertex and $\gamma(H)> |V(H)|/2$. If $\varphi:V(H)\rightarrow [0, 1]$ is an optimal solution to the linear program (\ref{eq:gamma}), then the following holds:
		\begin{enumerate}
			\item The bipartite graph $H_{01}=(V_0(H)\cup V_1(H), E(H_{01}))$, where $E(H_{01})$ is the set of edges from $V_0(H)$ to $V_1(H)$, has a matching which saturates every vertex in $V_0(H)$.\footnote{A {\it matching} $M$ in a graph $H=(V(H), E(H))$ is subset of edges of $E(H)$ without common vertices. The matching $M$ is said to saturate $A\subset V(H)$, if, for every vertex $a\in A$, there exists an edge in the matching $M$ incident on $a$.} 
			\item The subgraph of $H$ induced by the vertices of $V_{1/2}(H)$ has a spanning subgraph which is a disjoint union of cycles and isolated edges.
		\end{enumerate}
		\label{lm:Hgamma12}
	\end{lemma}
	Note that $\gamma(H)> |V(H)|/2$ implies that no optimal $\varphi$ is not identically equal to 1/2. Depending on the size of $V_{1/2}(H)$ the following cases arise: 
	\begin{enumerate}
		\item $|V_{1/2}(H)|\ne 0$: Let $H_{01}$ be the graph with vertex set $V_0(H)\cup V_1(H)$ and edge set $E(H_{01})$, where $E(H_{01})$ is the set of edges from $V_0(H)$ to $V_1(H)$. Let $H_{1/2}$ be the subgraph of $H$ induced by the vertices of $V_{1/2}(H)$. 
		Decompose $H$ into subgraphs $H_{01}$ and $H_{1/2}$. By Lemma \ref{lm:Hgamma12}, $H_{01}$, has a matching which saturates every vertex in $V_0(H)$. Therefore,
		\begin{equation}
			N\Big((H_{01})_S, G_n\Big) \lesssim_{a, b, c, d} |E(G_n)|^{|V_1(H)|} \lesssim_{a, b, c, d} |E(G_n)|^{|E(H_{01})|/2},
			\label{eq:H01}
		\end{equation}
		since $d_{\min}(H)\geq 2$ implies $|E(H_{01})|\geq 2|V_1(H)|$, because $V_1(H)$ is as independent set and there is no edge from $V_1(H)$ to $V_{1/2}(H)$.  
		Moreover, the subgraph $F$ of $H$ induced by the vertices of $V_{1/2}(H)$ has a spanning subgraph which is a disjoint union of cycles and isolated edges. Denote the connected components of $F$ by $F_1, \ldots, F_\nu$.  Observe that $d_v(H) = 2$, for every $v \in V_{1/2}(H)$ by Lemma \ref{lm:Hgamma} (4). Hence, if $F_i$ is a cycle or a double edge, for some $i$, then there cannot be an edge from a vertex in $F_i$ to $V(H)\backslash V(F_i)$, which contradicts the connectedness of $H$, since $V_0(H) \cup V_1(H)$ is non-empty. Hence, $F_i$ is an isolated edge for all $1 \leq i \leq \nu$.  
		This implies, for all $1 \leq i \leq \nu$, 
		\begin{equation}
			N(F_i, G_n)\lesssim_{a, b, c, d} |E(G_n)|.
			\label{eq:H1/2}
			\end{equation} 
			Also, each of the 2 vertices in $V(F_i)$ must be connected to at least some other vertex in $H_{1/2}$ or $H_0$, which means $|E(H_{0}, H_{1/2})| + |E(H_{1/2})| \geq 3 \nu$, where $E(H_{0}, H_{1/2})$ is the set of edges between $H_{0}$ and $H_{1/2}$.
		Therefore, by \eqref{eq:H01} and \eqref{eq:H1/2}, 		
		\begin{align}\label{eq:MHGn1}
			\frac{M(H, G_n)}{|E(G_n)|^{|E(H)|/2}} & \lesssim_{a, b, c, d} \frac{N\Big((H_{01})_S, G_n\Big)\prod_{i=1}^{\nu}N(F_i, G_n)}{|E(G_n)|^{|E(H)|/2}} \nonumber\\
			& \lesssim_{a, b, c, d} \frac{|E(G_n)|^{|E(H_{01})|/2 + \nu}}{|E(G_n)|^{|E(H_{01})|/2 + |E(H_{1/2})|/2 + |E(H_0, H_{1/2})|/2}}  \nonumber\\ 
			& = \frac{|E(G_n)|^{\nu}}{|E(G_n)|^{|E(H_{1/2})|/2 + |E(H_0, H_{1/2})|/2}} = |E(G_n)|^{-\nu/2}  = o(1) ,  
		\end{align}
since $\nu \geq 1$. 
		\item $|V_{1/2}(H)| = 0$: In this case  $\gamma(H) = |V_1(H)|$. 
		By the definition of $\mathcal U^{(3)}_{a, b, c, d}$, 
				\begin{align}\label{eq:VH01}
			|E(H)|=2 \gamma(H) = 2 |V_1(H)| . 
		\end{align}
		Since every vertex in $V_1(H)$ has degree 2 (by Lemma \ref{lm:Hgamma} (4)) and $V_1(H)$ is an independent set, \eqref{eq:VH01} means there is no edge between the vertices in $V_0(H)$. This implies the graph $H$ is bipartite. Also, by assumption, at least one edge of $H$ is mapped to $E_{3, 3}$ (recall the definition of $\mathcal U^{(3)}_{a, b, c, d}$). Denote the pre-image of this edge by $\{u, v\}$. 
		 Without loss of generality assume $\varphi(u)=0$ and $\varphi(v)=1$. Note that since $d_v(H)=2$, the edge $\{u, v\}$ can be repeated at most twice. We consider two cases depending on whether the edge $(u, v)$ appears once or twice.  	
		\begin{itemize}		
			\item The edge $\{u, v\}$ appears twice in $H$. This means $d_v(H)=2$. Now, since the edge $\{u,v\}$ must be part of a 2-star, there exists $w \in V_1(H)$ such that $\{u, w\} \in E(H_{01})$ (this makes $(v,u,w)$ a 2-star with root vertex $u$). Take $S \subseteq V_0(H)\setminus \{u\}$ and let $N(S)$ be the set of vertices in $V_1(H)\setminus \{v, w\}$ that are adjacent to some vertex in $S$. Denote by $E(S, V_1(H)\setminus \{v, w\})$ the number of edges in $H$ with one endpoint in $S$ and the other endpoint in $V_1(H) \setminus \{v, w\}$. Note that there is no edge in $E(H)$ joining $v$ to a vertex in $S$ (since $d_v(H)=2$, as the edge $\{u,v\}$ appears twice, and $u\notin S$), and there can be at most one edge in $E(H)$ joining $w$ to a vertex in $S$ (because $d_w(H)=2$ and $w$ is already joined to $u$). Hence, 
			$$|E(S, V_1(H)\setminus \{v, w\})| \geq 2|S|-1.$$ 
			Also, since every vertex in $N(S)$ has degree 2 in $H$, $|E(S, V_1(H)\setminus \{v, w\})| \leq 2|N(S)|$. Therefore, $2|N(S)| \geq 2|S|-1$ which implies, $|N(S)| \geq |S|$. Hence, by Hall's marriage theorem \cite{lpmatching}, there exists a matching between $V_0(H)\setminus \{u\}$ and $V_1(H) \setminus \{v, w\}$ that saturates every vertex in $V_0(H)\setminus \{u\}$. Therefore, we can count the number of copies of $H$ in $G_n$  as follows: 
			\begin{itemize} 
				\item First choose the 2-star $(v,u,w)$ in at most $O(\frac1{K_2} |E(G_n)|^{3/2})$ ways. 
				\item Then create the matching between $V_0(H)\setminus \{u\}$ and $V_1(H) \setminus \{v, w\}$ in $$|E(G_n)|^{|V_0(H)\setminus \{u\}|}$$ ways. 
				\item Next, choose the remaining (non-matched) vertices in $V_1(H) \setminus \{v, w\}$ in $$|E(G_n)|^{|V_1(H) \setminus \{v, w\}|-|V_0(H)\setminus \{u\}|}$$ ways. 
			\end{itemize}
			 This gives, 
			\begin{align}\label{eq:MHGn2}
				M(H, G_n)  \lesssim_{a, b, c, d} \frac1{K_2} |E(G_n)|^{\frac{3}{2}+|V_1(H) \setminus \{v, w\}|}  
				& =  \frac1{K_2} |E(G_n)|^{|V_1(H)| -\frac{1}{2} } \nonumber \\ 
				& \leq \frac1{K_2} |E(G_n)|^{|E(H)|/2} , 
			\end{align}
			since, by \eqref{eq:VH01}, $|V_1(H)|= |E(H)|/2$. 			
			\item The edge $\{u, v\}$ appears only once in $E(H)$. In this case, since $\min\{d_{u}(H), d_v(H)\} \geq 2$, there exists $v' \in V_1(H)\setminus\{v\}$ and $u' \in V_0(H)\setminus \{u \}$, such that $\{u, v'\}$ and $\{v,u'\} \in E(H)$. By an exactly similar argument using Hall's marriage theorem as in the previous case, it follows that there exists a matching between $V_0(H)\setminus \{u,u'\}$ and $V_1(H) \setminus \{v, v'\}$ that saturates every vertex in $V_0(H)\setminus \{u, u'\}$. Therefore, we can count the number of copies of $H$ in $G_n$  as follows: 
			\begin{itemize}				
				\item First chose the path $(v'uvu')$ in at most $O(\frac1{K_2^2} |E(G_n)|^{2}) = O(\frac1{K_2} |E(G_n)|^{2})$ ways. 
				\item Then create the matching between $V_0(H)\setminus \{u, u'\}$ and $V_1(H) \setminus \{v, v'\}$ in $$|E(G_n)|^{|V_0(H)\setminus \{u, u'\}|}$$ ways.				
				\item Next, choose remaining (non-matched) vertices in $V_1(H) \setminus \{v, v'\}$ in $$|E(G_n)|^{|V_1(H) \setminus \{v, v'\}|-|V_0(H)\setminus \{u, u'\}|}$$ ways. 			
			\end{itemize}
			This gives, 
			\begin{align}\label{eq:MHGn3}
				M(H, G_n)  \lesssim_{a, b, c, d} \frac1{K_2} |E(G_n)|^{2+|V_1(H) \setminus \{v, v'\}|}  
				& =  \frac1{K_2} |E(G_n)|^{|V_1(H)| } \nonumber \\ 
				& = \frac1{K_2} |E(G_n)|^{|E(H)|/2} 
			\end{align}
			since, by \eqref{eq:VH01}, $|V_1(H)|= |E(H)|/2$.		
		\end{itemize} 	
	\end{enumerate} 
	Combining \eqref{eq:MHGn1}, \eqref{eq:MHGn2}, and \eqref{eq:MHGn3}, the conclusion of the proposition follows.
	\end{proof}
	\begin{proof}[Proof of Lemma \ref{lm:H3}] 
		For $H = \mathcal{H}(\bm u)$ with $\bm u \in \mathcal{U}_{a, b, c, d}^{(3)}\cap \mathcal{U}_{a,b,c,d}^{(0)}$, consider the connected component $H_j$, for $1 \leq j \leq n$, such that $\gamma(H_j)=|E(H_j)|/2 > |V(H_j)|/2$ and $(H_j)_S$ has a 2-star with one edge in $E_{3, 3}$ (this exists by definition of $\mathcal{U}^{(3)}_{a,b,c,d}$). Without loss of generality, suppose $j=1$. 
Let $\bm u'$ denote the subset of vertices in $\bm u$ which spans the component $H_1$. Let $a',b',c',d'$ be, respectively,
\begin{itemize}
\item
the number of edges in $\bm u'$ which are in $E_{1,3}$;
\item
the number of edges in $\bm u'$ which are in $E_{2,2}$;
\item
the number of edges in $\bm u'$ which are in $E_{2,3}$;
\item
the number of edges in $\bm u'$ which are in $E_{3,3}$.
\end{itemize}
Then we claim that $\bm u'\in \mathcal{U}^{(3)}_{a',b',c',d'}$. Indeed, this follows by the construction of $H_1=\mathcal{H}(\bm u')$, which ensures that $H_1=\mathcal{H}(\bm u')$ is connected, with $\gamma(H_1)=|E(H_1)|/2>|V(H_1)|/2$, and $(H_1)_S$ has a two star with one edge in $E_{3,3}$, and so $d'>0$. Also, since $\bm u\in \mathcal{U}^{(0)}_{a,b,c,d}$, it follows using Lemma \ref{lm:Hgamma} (1) that $d_{\min}(H)\ge 2$, and so $d_{\min}(H_1)\ge 2$. Thus invoking 
Proposition \ref{prop:combine} we have 
$$\frac{M(H_1,G_n)}{|E(G_n)|^{|E(H_1)|}}=o(1)+O\Big(\frac{1}{K_2}\Big),$$
which on using \eqref{eq:alon} and Lemma \ref{lm:Hgamma} (4) gives
	\begin{align}
	M(H, G_n) & \lesssim_{a, b, c, d} M(H_1, G_n) |E(G_n)|^{ \sum_{i=2}^ \nu |\gamma(H_i)|} \nonumber \\ 
	& \leq M(H_1, G_n) |E(G_n)|^{\frac{1}{2}\sum_{i=2}^ \nu |E(H_i)|}  \nonumber \\ 
	& \lesssim_{a, b, c, d}\Big(o(1)+ \frac{1}{K_2}\Big) |E(G_n)|^{|E(H)|/2} . \nonumber  
	\end{align} 
	Hence, for $\mathcal H_{a, b, c, d}$ as defined before Lemma \ref{lm:H1}, we have
	$$|\mathcal{U}_{a, b, c, d}^{(3)}\cap \mathcal{U}_{a, b, c, d}^{(0)}| \leq \sum_{H \in \mathcal H_{a, b, c, d}} M(H, G_n) \bm 1\{H \cong \mathcal{H}(\bm u), \bm u \in \mathcal{U}_{a, b, c, d}^{(3)}\cap  \mathcal{U}_{a, b, c, d}^{(0)}\} =  \Big(o(1)+\frac{1}{K_2} \Big)|E(G_n)|^{\frac{a+b+c+d}{2}},$$
	since $|\mathcal H_{a, b, c, d}| \lesssim_{a, b, c , d} 1$, and $|E(H)|=a+b+c+d$. This completes the proof of Lemma \ref{lm:H3}.
\end{proof} 
The proof of Lemma \ref{momind} can now be completed easily. For this, recalling \eqref{eq:U} and using Lemmas \ref{lm:H1}, \ref{lm:H2}, and \ref{lm:H3} gives, 
\begin{align}
	\left|\Ex\left[U_{13}^{a}U_{22}^{b}U_{23}^{c} U_{33}^d \right]- \Ex\left[U_{13}^{a}U_{22}^{b}U_{23}^c\right]\Ex\left[U_{33}^{d}\right]  \right| 
	& \lesssim_{a, b, c, d, M} \frac1{|E(G_n)|^{\frac{a+b+c+d}2}} \sum_{s=1}^3 |\mathcal{U}^{(s)}_{a, b, c, d}\cap  \mathcal{U}_{a, b, c, d}^{(0)}|	\nonumber \\
	& \lesssim_{a, b, c, d, M} \frac{1}{K_2} + o(1) , \nonumber 
\end{align}
which goes to zero as $n \rightarrow \infty$ followed by $K_2 \rightarrow \infty$. }

\subsubsection{Proof of Lemma \ref{asymp-ind}} 
\label{sec:indpf}
In this section we will show that the joint cdf of $(U_{13,n,M},U_{22,n}+U_{23,n})$ factorize in the limit. To begin with observe that 
$$((U_{22,n}+U_{23,n})\ , \ U_{13,n,M})^{\top} \mid \bm{X}_{n},\bm{Z}_{n}^{(2)} \sim N_2\left(\bm \mu_n,\Sigma_n\right),$$ 
where
\begin{align*}
	\bm \mu_n:=\left(\tfrac1{2\sqrt{|E(G_n)|}}(\bm{Z}_{n}^{(2)})^{\top} A_{22} \bm{Z}_{n}^{(2)}\ , \ 0\right)^{\top} ,
\end{align*}
and 	
\begin{align*}
	\Sigma_n := \begin{pmatrix}   \Sigma_n^{(1,1)} &   \Sigma_n^{(1, 2)} \\  \Sigma_n^{(1, 2)} &  \Sigma_n^{(2,2)} 
	\end{pmatrix}
	:=\frac1{|E(G_n)|}\begin{pmatrix} (\bm{Z}_{n}^{(2)})^{\top}A_{23}A_{32}\bm{Z}_{n}^{(2)} & (\bm{Z}_{n}^{(2)})^{\top}A_{23}A_{31}\bm{X}_{n,M}^{(1)} \\ (\bm{Z}_{n}^{(2)})^{\top}A_{23}A_{31}\bm{X}_{n,M}^{(1)} & (\bm{X}_{n,M}^{(1)})^{\top}A_{13}A_{31}\bm{X}_{n,M}^{(1)} . 
	\end{pmatrix} 
\end{align*}

Note that $\vr((\bm{Z}_{n}^{(2)})^{\top} A_{22} \bm{Z}_{n}^{(2)}) = \ind^\top  A_{22}\ind \le \eg$. Also, 
\begin{align*}
	\Ex\left|(\bm{Z}_{n}^{(2)})^\top A_{23}A_{32}\bm{Z}_{n}^{(2)}\right|  & \lesssim \sum_{\substack{u, v\in \vm, w\in \vl}} a_{u, w}a_{w, v}    \le {\eg}
\end{align*}
and 	
\begin{align*}
	\Ex\left|(\bm{X}_{n,M}^{(1)})^{\top} A_{13}A_{31}\bm{X}_{n,M}^{(1)}\right|  & \lesssim_M \sum_{\substack{u, v\in \vh, w\in \vl}} a_{u, w}a_{w, v}    \le {\eg}. 
\end{align*} 
This shows that the conditional means $\{\bm \mu_n\}_{n \geq 1}$ and  the conditional variances $\{\Sigma_n^{(1,1)}\}_{n \geq 1}$ and $\{\Sigma_n^{(2, 2)}\}_{n \geq 1}$ are tight. Next, we claim that the covariance of $\Sigma_n^{(1, 2)}$ is negligible under the double limit, that is, 
\begin{align}\label{eq:variance12}
	\lim_{K_1\to \infty}\lim_{n\to\infty}\frac1{{\eg}^2}\operatorname{Var}((\bm{Z}_{n}^{(2)})^{\top}A_{23}A_{31}\bm{X}_{n,M}^{(1)}) =0.
\end{align}
Note that \eqref{eq:variance12} implies, 
$$\frac1{\eg}(\bm{Z}_{n}^{(2)})^\top A_{23}A_{31}\bm{X}_{n,M}^{(1)} \stackrel{P}{\to} 0.$$ As $(\mu_n^{(1)},\Sigma_n^{(1,1)})$ and $(\mu_n^{(2)},\Sigma_n^{(2,2)})$ are independent, assuming \eqref{eq:variance12}, the result in Lemma \ref{asymp-ind} then follows from Lemma \ref{tech} in Appendix \ref{sec:appendix}.

The remainder of the proof is devoted in proving \eqref{eq:variance12}. Note that
\begin{align} 
	\operatorname{Var}((\bm{Z}_{n}^{(2)})^{\top}A_{23}A_{31}\bm{X}_{n,M}^{(1)}) & = \sum_{u \in \vh, v \in \vm} \left(\sum_{w \in \vl} a_{u, w}a_{w, v}\right)^2 \nonumber \\ 
	& \le K_1\sum_{v \in \vm} d_v^2 \tag*{(since $|V_1| \leq K_1$)} \nonumber \\ 
	& \le K_1\sum_{v =1}^n d_v^2\ind\set{d_v \le \tfrac{2 \eg}{K_1}}, 
	\label{eq:varZn} 
\end{align}	 
where the last step uses \eqref{eq:Kdegree}. Now, for every integer $v \geq 1$ fixed, denote by 
\begin{align}\label{eq:fnv}
	f_{n, K_1}(v):= \frac{K_1d_{v}^2}{{\eg}^2}\ind \left\{\frac{d_{v}}{\eg} \le \frac{2}{K_1} \right\}. 
\end{align} 
Since, the sequence  $\{\frac{d_v}{\eg}\}_{n\ge 1}$ is bounded by 1 for every $n, v \geq 1$, passing through a subsequence (which we also index by $n$ for notational convenience) without loss of generality we can assume $$\lim_{n \rightarrow \infty} \frac{d_v}{|E(G_{n})|}=:\lambda(v) \quad \mbox{ and } \quad \lim_{n \rightarrow \infty} \ind\left\{\frac{d_v}{|E(G_{n})|}\le \frac{2}{K_1} \right\}=:\eta_{K_1}(v)$$ exists simultaneously for all $v \geq 1, K_1\ge 1$.  Thus for all $K_1\in \Z_{>0}$  and for all $v\ge 1$,
\begin{align}\label{eq:fnKv}
	\lim_{n \rightarrow \infty} f_{n, K_1}(v) = K_1\lambda(v)^2\eta_{K_1}(v).
\end{align}
Furthermore, since the vertices of $G_n$ are ordered according to the degrees (recall Definition \ref{defg}), for any $v \geq 1$, 
$$\frac{d_v}{\eg} \le \frac1{v\eg}(d_1+d_2+\cdots+d_v) \le \frac{2}{v}.$$ 
Thus $f_{n,K_1}(v) \le \frac{4K_1}{v^2}$  which is summable over $v$. Hence, by the Dominated Convergence theorem,
\begin{align*}
	\lim_{n\to\infty} \sum_{v=1}^n f_{n,K_1}(v) = K_1 \sum_{v=1}^{\infty} \lambda(v)^2\eta_{K_1}(v).
\end{align*}
Moreover, from the definition of $\eta_{K_1}(v)$, it is clear that $\eta_{K_1}(v)=0$ or $1$, and in the latter case $\lambda(v) \le \frac{2}{K_1}$. Thus, $$\lim_{K_1\to \infty} K_1 \lambda(v)^2\eta_{K_1}(v) \lesssim \lim_{K_1\to \infty} \frac{1}{K_1} = 0,$$ $K_1 \lambda(v)^2 \eta_{K_1}(v) \le 2\lambda(v)$, and 
$$\sum_{v=1}^{\infty}\lambda(v) = \lim_{r\to\infty} \sum_{v=1}^{r}\lambda(v)  = \lim_{r\to\infty}\lim_{n\to\infty} \sum_{v=1}^r \frac{d_v}{\eg} \le 2.$$
Therefore, by another application of Dominated Convergence Theorem and recalling \eqref{eq:varZn}, \eqref{eq:fnv}, and \eqref{eq:fnKv} gives, 
\begin{align*} 
	\lim_{K_1\to \infty}\lim_{n\to\infty}\frac1{{\eg}^2}\operatorname{Var}((\bm{Z}_{n}^{(2)})^{\top}A_{23}A_{31}\bm{X}_{n,M}^{(1)}) & \leq \lim_{K_1\to \infty} \lim_{n \rightarrow \infty} \sum_{v=1}^n f_{n,K_1}(v) \nonumber \\ 
	& = \lim_{K_1\to \infty} K_1 \sum_{v=1}^{\infty} \lambda(v)^2\eta_{K_1}(v)=0. 
\end{align*}  
This completes the proof of \eqref{eq:variance12}.


\subsection{Distributional Limit of $U_{13,n,M}$} \label{sec:limits1} 

Having established the asymptotic independence of $U_{13,n,M}$ and $U_{22,n}+U_{23,n}+U_{33,n}$, we now proceed to derive the distributional limit of $U_{13,n,M}$. We begin with the following general lemma: 

\begin{lemma}\label{l1lim} Let $Y_1,Y_2,\ldots$ be i.i.d.~mean $0$ variance $1$ random variables and $\Sigma=((\sigma_{st}))$ be as in Assumption \ref{asum1}. For $K \geq 1$, denote by $\bm Y_K = (Y_1, Y_2, \ldots, Y_K)^\top$ and $\Sigma_K = (\sigma_{st})_{1 \leq s, t \leq K}$. Then,  the sequence of non-negative random variables 
	\begin{align*}
		W_K := \bm{Y}_{K}^\top\Sigma_K \bm Y_{K} = \sum_{1\le s, t \le K} \sigma_{s, t} Y_s Y_t 
	\end{align*}
	converges in $L^1$, as $K \rightarrow \infty$, to some random variable $W_{\infty}:=\bm{Y}_{\infty}^\top\Sigma \bm Y_{\infty} := \sum_{1\le u, v \le \infty} \sigma_{s, t} Y_s Y_t 
	$. 
	\end{lemma}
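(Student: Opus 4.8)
The plan is to first extract from Assumption~\ref{asum1} the two structural facts about $\Sigma$ that force the conclusion — positive semidefiniteness and summability of the diagonal — and then to establish $L^1$-convergence by a Cauchy argument that uses only the second moments of the $Y_s$ (this is essential, since no fourth-moment assumption is available). For positive semidefiniteness I would note that for any $K\ge 1$ and $x\in\R^K$,
\[
x^\top\Sigma_K x=\lim_{n\to\infty}\frac{1}{|E(G_n)|}\sum_{s,t=1}^{K}\Big(\sum_{v=1}^{n}a_{s,v}a_{v,t}\Big)x_sx_t=\lim_{n\to\infty}\frac{1}{|E(G_n)|}\sum_{v=1}^{n}\Big(\sum_{s=1}^{K}a_{s,v}x_s\Big)^{2}\ge 0,
\]
so every $\Sigma_K$ is positive semidefinite; hence $W_K=\bm{Y}_K^\top\Sigma_K\bm{Y}_K\ge 0$ almost surely, which is the non-negativity asserted in the lemma. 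Next, since the vertices of $G_n$ are labelled in non-increasing order of degree, $\sum_{s=1}^{K}d_s\le\sum_{s=1}^{n}d_s=2|E(G_n)|$ for every $K$, and $\sigma_{ss}=\lim_n d_s/|E(G_n)|$, so $\sum_{s=1}^{K}\sigma_{ss}\le 2$ for all $K$, and therefore $\sum_{s\ge 1}\sigma_{ss}\le 2<\infty$. Combined with the inequality $\sigma_{st}^{2}\le\sigma_{ss}\sigma_{tt}$ (a consequence of positive semidefiniteness) this gives $\sum_{s,t\ge 1}\sigma_{st}^{2}\le\big(\sum_{s\ge 1}\sigma_{ss}\big)^{2}<\infty$.

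With these facts in hand, for $K<K'$ I would split $\bm{Y}_{K'}=(\bm{Y}_K^\top,\widetilde{\bm{Y}}^\top)^\top$ with $\widetilde{\bm{Y}}=(Y_{K+1},\dots,Y_{K'})^\top$, and decompose $\Sigma_{K'}$ into the block $\Sigma_K$, the off-diagonal block $B=(\sigma_{st})_{s\le K<t\le K'}$, and the corner block $C=(\sigma_{st})_{K<s,t\le K'}$, so that
\[
W_{K'}-W_K=2\,\bm{Y}_K^\top B\,\widetilde{\bm{Y}}+\widetilde{\bm{Y}}^\top C\,\widetilde{\bm{Y}}.
\]
Since $C\succeq 0$, the corner term is non-negative with expectation $\tr C=\sum_{K<s\le K'}\sigma_{ss}$; and since $\bm{Y}_K$ and $\widetilde{\bm{Y}}$ are independent with uncorrelated variance-one coordinates, a direct second-moment computation gives $\Ex[(\bm{Y}_K^\top B\,\widetilde{\bm{Y}})^{2}]=\sum_{s\le K}\sum_{K<t\le K'}\sigma_{st}^{2}$. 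Hence, by Jensen's inequality,
\[
\Ex\big|W_{K'}-W_K\big|\le 2\Big(\sum_{s\le K}\sum_{K<t\le K'}\sigma_{st}^{2}\Big)^{1/2}+\sum_{K<s\le K'}\sigma_{ss},
\]
and both terms are tails of the convergent series from the first paragraph, so the right-hand side tends to $0$ as $K,K'\to\infty$. Thus $\{W_K\}_{K\ge 1}$ is Cauchy in $L^1$; completeness provides an $L^1$-limit $W_\infty$, which is non-negative as an $L^1$-limit of non-negative variables, and this is exactly the random variable denoted $\bm{Y}_\infty^\top\Sigma\bm{Y}_\infty=\sum_{s,t}\sigma_{st}Y_sY_t$ in the statement.

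I expect the main obstacle — and the reason to organize the proof this way — to be the absence of a finite fourth moment for $Y_1$, which rules out controlling $W_{K'}-W_K$ by a naive variance (Hilbert--Schmidt) bound on the full quadratic form $\sum_{s,t\le K'}\sigma_{st}Y_sY_t$. The block split sidesteps this: the corner term is controlled by positive semidefiniteness, so its $L^1$-norm equals its trace and uses only $\Ex[Y_s^{2}]=1$, while the bilinear cross term lives across two independent blocks and therefore admits a second-moment bound that again uses only $\Ex[Y_s^{2}]=1$. The other point requiring care is the summability of $\Sigma$ — finiteness of $\sum_s\sigma_{ss}$ from the degree ordering, and then $\sum_{s,t}\sigma_{st}^{2}<\infty$ from semidefiniteness — but both are short once the relevant quantities are singled out.
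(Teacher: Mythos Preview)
Your proof is correct and follows essentially the same approach as the paper: establish non-negativity of $W_K$ via positive semidefiniteness, extract the summability $\sum_s\sigma_{ss}\le 2$ and $\sum_{s,t}\sigma_{st}^2<\infty$ from Assumption~\ref{asum1}, and then run an $L^1$-Cauchy argument that carefully avoids fourth moments. The only organizational differences are minor: the paper splits $W_{K'}-W_K$ into its diagonal and off-diagonal parts (bounding the full off-diagonal increment in $L^2$), whereas you split into the corner block $\widetilde{\bm Y}^\top C\widetilde{\bm Y}$ (controlled in $L^1$ directly via its trace, using that $C\succeq 0$ as a principal submatrix) and the bilinear cross term; and the paper bounds $\sum_{s,t}\sigma_{st}^2$ by a direct combinatorial estimate, while you use the cleaner $\sigma_{st}^2\le\sigma_{ss}\sigma_{tt}$ from positive semidefiniteness.
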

	
	\begin{proof} The non-negativity of $W_K$ follows from the definition of $\sigma_{st}$ in \eqref{as:point}. Indeed, observe that for each $K \geq 1$, 
		\begin{align*}
			W_K:=\sum_{1\le s, t \le K} \sigma_{st} Y_s Y_t = \lim_{n\to\infty} \sum_{1\le s, t \le K} \sum_{v=1}^n a_{s, v}a_{v, t}Y_s Y_t = \lim_{n\to\infty} \sum_{v=1}^n\left(\sum_{s=1}^K a_{s, v} Y_s \right)^2 \ge 0 .
		\end{align*}
		We now show $\{W_K\}_{K \geq 1}$ is Cauchy in $L^1$. Towards this, observe that by Fatou's Lemma
		\begin{align}\label{eq:sumbd}
			\sum_{s=1}^\infty \sigma_{ss} \le \liminf_{n\to\infty} \frac1{\eg} \sum_{s=1}^n\sum_{v=1}^n a_{s, v} \le 2, 
		\end{align}
		and
		\begin{align}\label{eq:sqbd}
			\sum_{1\le s, t<\infty} \sigma_{st}^2 & \le \liminf_{n\to \infty} \sum_{1\le s, t \le n} \frac1{{\eg}^2}\sum_{1 \leq v, v' \leq n} a_{s, v}a_{s, v'}a_{v, t}a_{v', t} \nonumber \\ 
			& \le \liminf_{n\to \infty} \sum_{1\le s, t \le n} \frac1{{\eg}^2}\sum_{1 \leq v, v' \leq n} a_{s, v}a_{v', t} \nonumber \\ 
			& \le 4.
		\end{align}
		Thus, using the fact that $Y_1, Y_2, \ldots, $ have zero mean and variance $1$ we get, 
		\begin{align*}
			\Ex\left|W_{K+K'}-W_K\right| & \le  \sum_{s=K+1}^{K+K'}\sigma_{ss}\Ex [Y_s^2] + \sqrt{\Ex\bigg[\bigg(\sum_{1\le s \neq t \le K+K'} \sigma_{st} Y_s Y_t -\sum_{1\le s \neq t \le K} \sigma_{st} Y_s Y_t \bigg)^2\bigg]} \\ & 
			= \sum_{s=K+1}^{K+K'}\sigma_{ss} + \sqrt{\sum_{1\le s \neq t \le K+K'} \sigma_{st}^2 -\sum_{1\le s \neq t \le K}\sigma_{st}^2},
		\end{align*}
		which goes to zero as $K' \to \infty$ followed by $K \to \infty$. Thus $W_{K}$ is Cauchy in $L^1$. As $L^1$ is complete, $W_K$ converges to some $W_{\infty}\in L^1$ which denote as $\bm{Y}_{\infty}^{\top}\Sigma \bm{Y}_{\infty}$. 
	\end{proof}

	Using the above lemma we can now derive in the following proposition the limit of $U_{13,n,M}$ (as defined in \eqref{eq:U23}) under Assumptions \ref{asum1} and \ref{asum2}. The limit turns out to be normal random variable with random variance, where the random variance is an infinite dimensional quadratic form.

	\begin{proposition}\label{p:lim1} Fix $M$ large enough such that $U_{13,n,M}$  is well defined. Consider $\bm{X}_{\infty, M}=(X_{1, M},X_{2, M},\ldots)^{\top}$ where $\{X_{u, M}\}_{u\geq 1}$ is the i.i.d.~truncated sequence as defined in \eqref{def:trunc}. Then as $n\to \infty$, followed by $K_1,K_2\to \infty$, 
		$$U_{13,n,M} \to Q_{1, M}$$ 
		in distribution and in all moments, where $Q_{1, M}\sim N(0,(\bm{X}_{\infty, M})^\top\Sigma\bm{X}_{\infty, M} ) )$, with $\Sigma$ defined in Assumption \ref{asum1}. Furthermore, the moment generating function (mgf) of $Q_{1, M}$ exists in an open interval containing zero. 
	\end{proposition}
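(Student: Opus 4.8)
The approach rests on one structural observation: conditionally on $\bm X_{n,M}^{(1)}$, the random variable $U_{13,n,M}$ is a linear combination of the independent standard Gaussians $\{Z_w\}_{w\in V_3}$, hence conditionally centered Gaussian with conditional variance equal to the quantity $\Sigma_n^{(2,2)}$ already computed in the proof of Lemma \ref{asymp-ind}, namely $V_n:=\frac{1}{\eg}(\bm X_{n,M}^{(1)})^\top A_{13}A_{31}\bm X_{n,M}^{(1)}=\frac{1}{\eg}\sum_{s,t\in V_1}\big(\sum_{w\in V_3}a_{s,w}a_{w,t}\big)X_{s,M}X_{t,M}$. The plan is then: (i) compute $\lim_{n\to\infty}V_n$ for fixed $K_1,K_2$; (ii) deduce the limit of $U_{13,n,M}$ by conditioning; (iii) send $K_1\to\infty$ via Lemma \ref{l1lim}; (iv) upgrade to convergence of all moments and to the mgf statement using the uniform exponential bound behind Lemma \ref{l:mgf}.

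For (i), since $V_1=\lc 1,\lfloor K_1\rfloor\rc$ is a \emph{fixed} finite set it suffices to treat each coefficient $\frac{1}{\eg}\sum_{w\in V_3}a_{s,w}a_{w,t}$ for fixed $s,t$; this differs from $\frac{1}{\eg}\sum_{v=1}^n a_{s,v}a_{v,t}$ by at most $|V_1\cup V_2|/\eg\le K_2/\sqrt{\eg}\to 0$, so by Assumption \ref{asum1} it converges to $\sigma_{st}$, whence $V_n\to W_{K_1,M}:=\bm X_{\lfloor K_1\rfloor,M}^\top\Sigma_{\lfloor K_1\rfloor}\bm X_{\lfloor K_1\rfloor,M}$ almost surely (a finite sum with deterministic coefficients converging); note $W_{K_1,M}$ does not depend on $K_2$. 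A quick check (the truncated variables are bounded and $\frac{1}{\eg}\sum_w a_{s,w}a_{w,t}\le d_s/\eg\le 2$) shows $V_n$, hence $W_{K_1,M}$, is bounded by a deterministic constant depending only on $K_1,M$. For (ii), conditioning gives $\Ex[e^{\i tU_{13,n,M}}]=\Ex[e^{-t^2V_n/2}]\to\Ex[e^{-t^2W_{K_1,M}/2}]$ by bounded convergence, and $\Ex[U_{13,n,M}^{2j}]=(2j-1)!!\,\Ex[V_n^{\,j}]\to(2j-1)!!\,\Ex[W_{K_1,M}^{\,j}]$ with odd moments vanishing, so for fixed $K_1,K_2$ we have $U_{13,n,M}\stackrel{D}{\to}N(0,W_{K_1,M})$, with all moments. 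For (iii), Lemma \ref{l1lim} applied to the i.i.d.\ mean $0$, variance $1$ sequence $\{X_{s,M}\}_{s\ge1}$ yields $W_{K_1,M}\to W_{\infty,M}:=\bm X_{\infty,M}^\top\Sigma\bm X_{\infty,M}$ in $L^1$ as $K_1\to\infty$; using $|e^{-x}-e^{-y}|\le|x-y|$ on $[0,\infty)$ one gets $|\Ex e^{-t^2W_{K_1,M}/2}-\Ex e^{-t^2W_{\infty,M}/2}|\le\tfrac12 t^2\,\Ex|W_{K_1,M}-W_{\infty,M}|\to0$, identifying the double limit in distribution as $Q_{1,M}\sim N(0,\bm X_{\infty,M}^\top\Sigma\bm X_{\infty,M})$.

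The main obstacle is step (iv): interchanging the two iterated limits at the level of moments and of the moment generating function. The tool is the uniform exponential control: by Lemma \ref{l:mgf} (equivalently the bound \eqref{eq:momfin}) there is $\delta>0$ with $\sup_{K_1,K_2\ge1}\sup_{n\ge1}\Ex[e^{\delta U_{13,n,M}}]<\infty$, and conditioning rewrites this as $\sup\Ex[e^{\delta^2 V_n/2}]<\infty$. Two applications of Fatou's lemma — first along $n\to\infty$ using $V_n\to W_{K_1,M}$ a.s., then along $K_1\to\infty$ using $W_{K_1,M}\to W_{\infty,M}$ in probability — give $\sup_{K_1}\Ex[e^{\delta^2 W_{K_1,M}/2}]<\infty$ and $\Ex[e^{\delta^2 W_{\infty,M}/2}]<\infty$. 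The latter yields $\Ex[e^{\theta Q_{1,M}}]=\Ex[e^{\theta^2 W_{\infty,M}/2}]<\infty$ for $|\theta|\le\delta$, i.e.\ the mgf of $Q_{1,M}$ exists in a neighbourhood of $0$. The uniform bound $\sup_{K_1}\Ex[e^{\delta^2 W_{K_1,M}/2}]<\infty$ forces $\{W_{K_1,M}^{\,j}\}_{K_1}$ to be uniformly integrable for each $j$, so $\Ex[W_{K_1,M}^{\,j}]\to\Ex[W_{\infty,M}^{\,j}]$; combining this with the $n\to\infty$ moment convergence from (ii) gives $\lim_{K_1,K_2\to\infty}\lim_{n\to\infty}\Ex[U_{13,n,M}^k]=\Ex[Q_{1,M}^k]$ for every $k\ge1$, which completes the plan.
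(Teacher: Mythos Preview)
Your proof is correct and follows essentially the same approach as the paper: both condition on $\bm X_{n,M}^{(1)}$ to obtain a centered Gaussian with variance $V_n$, use Assumption~\ref{asum1} together with the bound $|V_1\cup V_2|/\eg\le K_2/\sqrt{\eg}\to0$ to identify $\lim_n V_n$, invoke Lemma~\ref{l1lim} for the $K_1\to\infty$ step, and appeal to the uniform exponential bound \eqref{eq:momfin} for moment convergence and the mgf. Your step~(iv) is somewhat more explicit than the paper's (which simply cites uniform integrability and Fatou), but the argument is the same.
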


	\begin{proof}[Proof of Proposition \ref{p:lim1}]  Recalling the definition of  $U_{13,n,M}$ from \eqref{eq:U23}, note that 
		\begin{align*}
			U_{13,n,M}\mid \bm{X}_{\infty, M} \sim N\left(0,{V}_{n,K_1,K_2,M}\right), \text{ where }  {V}_{n,K_1,K_2,M}:=\frac1{\eg}(\bm{X}_{n, M}^{(1)})^\top A_{13}A_{31}\bm{X}_{n, M}^{(1)}.
		\end{align*}
		Fix $K_1,K_2\ge 1$. Note that as $n \rightarrow \infty$,	
		\begin{align}\label{eq:V12M}
			{V}_{n,K_1,K_2,M} & = \sum_{1\le u, v \le K_1} X_{u, M}X_{v,M}\frac1{\eg}\sum_{w \in \vl} a_{u, w}a_{w, v} \nonumber \\
			& \stackrel{a.s.}\to \sum_{1\le u, v \le K_1} \sigma_{uv} X_{u, M}X_{v,M} := V_{K_1,M},  
		\end{align}
		where $\sigma_{uv}$ is defined in \eqref{as:point} and using the observation 
		$$\limsup_{n \rightarrow \infty }\frac{1}{|E(G_n)|}\sum_{w \in \vh\cup\vm} a_{u, w}a_{w, v} \le \limsup_{n \rightarrow \infty } \frac{K_2}{\sqrt{\eg}} = 0,$$ 
		since $|V_1 \cup V_2| \leq K_2 \sqrt{|E(G_n)|}$	(recall \eqref{eq:V123}). Moreover, since $$|{V}_{n,K_1,K_2,M} | \leq W:=  \sum_{1\le u, v \le K_1} |X_{u, M}||X_{v,M}|$$ and $\mathbb E[W] \lesssim_{K_1, M} 1$, by the Dominated Convergence Theorem the convergence in \eqref{eq:V12M} is also in $L^1$.   
		
		Next, invoking Lemma \ref{l1lim} $V_{K_1, M}$ converges to a random variables $V_M:= (\bm{X}_{\infty, M})^\top\Sigma\bm{X}_{\infty, M}$ in $L^1$ as  $K_1 \to \infty $. Therefore, as $n\to\infty$ followed by $K_1,K_2\to \infty$, $V_{n,K_1,K_2,M}$ converges to  $V_M$ in $L^1$. Thus under this iterated limit,  
		\begin{align*}
			\Ex[e^{\i t U_{13,n,M}}]= \Ex\bigg[e^{-\tfrac12t^2V_{n,K_1,K_2,M}}\bigg] \to \Ex\bigg[e^{-\tfrac12t^2V_M} \bigg].
		\end{align*}
		This establishes $U_{13,n,M} \to Q_{1, M}	$ in distribution. By \eqref{eq:momfin}, the exponential moments of $U_{13,n,M}$ are uniformly bounded in $n$, in a neighborhood of zero. Thus $U_{13,n,M} \to Q_{1, M}$ in all moments by uniform integrability. This also implies the finiteness of mgf of $Q_{1, M}$, since by Fatou's Lemma 
		\begin{align*}
			\Ex [e^{tQ_{1, M}}] \le \sup_{n,K_1,K_2\ge 1} \Ex[e^{tU_{13,n,M}}] <\infty ,
		\end{align*} 
		where the last inequality  holds for $|t|$ small enough via \eqref{eq:momfin}. 
		This completes the proof.  
	\end{proof}

	\subsection{Distributional Limit of $(U_{22,n}+U_{23,n}+U_{33,n})$} \label{sec:limits2}
	
	In this section we obtain the  distributional limit of $U_{22,n}+U_{23,n}+U_{33,n}$, where $U_{22,n}, U_{23,n}, U_{33,n}$ are defined as in  \eqref{eq:U23}. We begin with the following general result.

	\begin{lemma} \label{ccond} Suppose $r_n\to\infty$ and $\{c_{s,n}\}_{n\ge 1, 1\le s \le r_n}$ is a triangular sequence of arrays satisfying the following conditions:  
		\begin{enumerate}[label=(\alph*)]
			\item \label{c0} $|c_{1,n}|\ge |c_{2,n}| \ge \ldots \ge |c_{r_n,n}|$ for each $n\geq 1$ and $\sup_n \sum_{s=1}^{r_n} c_{s,n}^2 \le L$ for some $L>0$,
			\item \label{ca} $\lim_{n \rightarrow \infty} c_{s,n} \to c_s$, for each $s \geq 1$, 
			\item \label{cb} $\ds \lim_{K\to\infty} \limsup_{n\to\infty} \sum_{s=K+1}^{r_n} c_{s,n}^2=\lim_{K\to\infty} \liminf_{n\to\infty} \sum_{s=K+1}^{r_n} c_{s,n}^2=\rho^2$.
		\end{enumerate}
		Let $\{Y_s\}_{s \geq 1}$ be a sequence of i.i.d mean $0$ and variance $1$ random variables and $X_n:=\sum_{s=1}^{r_n} c_{s, n}Y_s$. Then the random variable $\sum_{s=1}^{\infty} c_sY_s$ is well-defined and $X_n$ converges weakly to the random variable
		$$X:=\sum_{s=1}^{\infty} c_sY_s+ \rho Z, $$
		where $Z\sim N(0,1)$ is independent from $\{Y_s\}_{s\ge 1}$. 
	\end{lemma}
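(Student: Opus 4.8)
The plan is to decompose $X_n$ into a ``head'' consisting of the first $K$ coordinates and a ``tail'' consisting of the remaining coordinates, and then let $n\to\infty$ followed by $K\to\infty$. First I would check that $\sum_{s=1}^\infty c_s Y_s$ is well-defined: by condition \ref{ca} and Fatou applied to condition \ref{c0}, $\sum_{s\ge 1} c_s^2 \le L < \infty$, so the partial sums $\sum_{s=1}^K c_s Y_s$ form an $L^2$-bounded martingale (the $Y_s$ are i.i.d.\ mean $0$ variance $1$), hence converge a.s.\ and in $L^2$ to a well-defined limit $\sum_{s\ge 1} c_s Y_s$. Fix $K\ge 1$ and write $X_n = H_{n,K} + R_{n,K}$, where $H_{n,K} := \sum_{s=1}^{K} c_{s,n} Y_s$ (for $n$ large enough that $r_n \ge K$) and $R_{n,K} := \sum_{s=K+1}^{r_n} c_{s,n} Y_s$.

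For the head: by condition \ref{ca}, $c_{s,n}\to c_s$ for each $s\le K$, so $H_{n,K} \stackrel{D}{\to} H_K := \sum_{s=1}^{K} c_s Y_s$ as $n\to\infty$ (continuous mapping on a fixed finite linear combination). For the tail, the key point is a Lindeberg-type central limit theorem for the triangular array $\{c_{s,n} Y_s\}_{K+1 \le s \le r_n}$: its variance is $\sum_{s=K+1}^{r_n} c_{s,n}^2$, which by condition \ref{cb} converges (as $n\to\infty$) to a limit that I will call $\tau_K^2$, with $\tau_K^2 \to \rho^2$ as $K\to\infty$. I would verify the Lindeberg condition using that $\max_{s>K}|c_{s,n}| \to 0$ as $n\to\infty$ (this follows from \ref{c0}, \ref{ca}: since $|c_{K+1,n}|$ is the largest tail coefficient and $c_{K+1,n}\to c_{K+1}$ with $\sum c_s^2$ finite, the tail max is controlled — more carefully, $\max_{s>K} c_{s,n}^2 \le \sum_{s>K} c_{s,n}^2$ which is bounded, and combined with $c_{s,n}\to c_s$ one gets $\limsup_n \max_{s>K}|c_{s,n}| = \sup_{s>K}|c_s| \to 0$ as $K\to\infty$; alternatively just feed $\max|c_{s,n}|\to 0$ into the truncation). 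Then the Lindeberg CLT gives $R_{n,K} \stackrel{D}{\to} N(0,\tau_K^2)$ as $n\to\infty$. Since $H_{n,K}$ is $\sigma(Y_1,\dots,Y_K)$-measurable and $R_{n,K}$ is $\sigma(Y_{K+1},\dots)$-measurable, they are independent for each $n$, and so $(H_{n,K}, R_{n,K}) \stackrel{D}{\to} (H_K, G_K)$ with $G_K \sim N(0,\tau_K^2)$ independent of $H_K$. Therefore $X_n = H_{n,K}+R_{n,K} \stackrel{D}{\to} H_K + G_K$ as $n\to\infty$, for each fixed $K$.

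Finally I would let $K\to\infty$. On one side, $H_K + G_K \stackrel{D}{=} H_K + \tau_K Z$ with $Z\sim N(0,1)$ independent of everything; as $K\to\infty$, $H_K \to \sum_{s\ge 1} c_s Y_s$ a.s.\ (shown above) and $\tau_K \to \rho$, so $H_K + \tau_K Z \stackrel{D}{\to} \sum_{s\ge 1} c_s Y_s + \rho Z =: X$. To conclude $X_n \stackrel{D}{\to} X$ from the iterated limits, I would use a standard $3\varepsilon$-argument with a bounded Lipschitz test function $\phi$: $|\Ex\phi(X_n) - \Ex\phi(X)| \le |\Ex\phi(X_n) - \Ex\phi(H_K+G_K)| + |\Ex\phi(H_K+G_K) - \Ex\phi(X)|$; the second term is small for $K$ large (uniformly in $n$), and the first term goes to $0$ as $n\to\infty$ for each fixed $K$. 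Quantitatively, $|\Ex\phi(X_n)-\Ex\phi(H_K+G_K)|$ can be bounded once we control $\Ex|R_{n,K} - (\text{its Gaussian limit})|$, but the cleanest route is: for fixed $K$, $\limsup_n |\Ex\phi(X_n)-\Ex\phi(X)| \le \limsup_n|\Ex\phi(X_n)-\Ex\phi(H_K+G_K)| + |\Ex\phi(H_K+G_K)-\Ex\phi(X)| = 0 + |\Ex\phi(H_K+G_K)-\Ex\phi(X)|$, and then send $K\to\infty$. I expect the main obstacle to be the careful verification of the Lindeberg condition for the tail array uniformly enough to take the double limit — specifically, establishing that $\max_{s>K}|c_{s,n}|\to 0$ as $n\to\infty$ purely from \ref{c0}, \ref{ca}, \ref{cb}, and making sure condition \ref{cb} (with its matching $\liminf$ and $\limsup$) is used to pin down the tail variance limit $\tau_K^2$ so that the Gaussian component has the right variance $\rho^2$ in the final limit. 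The rest is bookkeeping with independence and the $3\varepsilon$-argument.
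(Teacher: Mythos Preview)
Your head--tail decomposition is exactly the paper's strategy, and the well-definedness of $\sum_s c_s Y_s$ and the head convergence are fine. The gap is in the tail step: you assert that for each \emph{fixed} $K$ the Lindeberg CLT gives $R_{n,K}\stackrel{D}{\to} N(0,\tau_K^2)$ as $n\to\infty$, but Lindeberg need not hold at fixed $K$. By condition (a) the largest tail coefficient is $|c_{K+1,n}|$, and by (b) this converges to $|c_{K+1}|$, which is typically nonzero. Concretely, take $c_{s,n}=2^{-s/2}$ for $1\le s\le n$: then $R_{n,K}\to\sum_{s>K}2^{-s/2}Y_s$ almost surely, which is not Gaussian when the $Y_s$ are not. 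So the intermediate claim ``$X_n\stackrel{D}{\to} H_K+G_K$ for each fixed $K$'' is false in general, and the $3\varepsilon$ argument built on it does not go through as stated. You correctly observe that $\limsup_n|c_{K+1,n}|=|c_{K+1}|\to 0$ as $K\to\infty$, but that is a statement about the \emph{iterated} limit, not about $n\to\infty$ at fixed $K$.

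The paper repairs this with a case split. If $\rho=0$, no CLT is needed: $\vr(R_{n,K})=\sum_{s>K}c_{s,n}^2\to 0$ under the iterated limit, so $R_{n,K}\stackrel{P}{\to}0$ and the head alone gives the answer. If $\rho>0$, then the H\'ajek--\v{S}id\'ak ratio
\[
\frac{\max_{s>K}c_{s,n}^2}{\sum_{s>K}c_{s,n}^2}=\frac{c_{K+1,n}^2}{\sum_{s>K}c_{s,n}^2}\longrightarrow \frac{c_{K+1}^2}{\kappa-\sum_{s\le K}c_s^2}
\]
as $n\to\infty$, where $\kappa:=\rho^2+\sum_s c_s^2$ (the existence of $\lim_n\sum_{s\le r_n}c_{s,n}^2=\kappa$ follows from (b) and (c)); the right-hand side then tends to $0$ as $K\to\infty$ because the numerator vanishes while the denominator tends to $\rho^2>0$. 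This is precisely where the positivity of $\rho$ is used, and without it the normalized ratio can stay bounded away from zero. Once you make this split, your outline becomes the paper's proof.
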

	
	\begin{proof} First, note that as $\sum_{s=1}^{r_n} c_{s,n}^2 \le L$ for all $n$, by Fatou's Lemma $\sum_{s=1}^{\infty} c_s^2 \le L$. Thus, $\sum_{s=1}^{\infty} c_s Y_s$ is well-defined by Kolmogorov's three series theorem. 
		
		To establish the weak convergence we consider the following two cases: 	
		\begin{enumerate}
			
			\item $\rho=0$: Note that	\begin{align*}
				\vr\left(X_n-\sum_{s=1}^K c_{s,n}Y_s\right)= \vr\left(\sum_{s=K+1}^{r_n} c_{s,n}Y_s \right)=\sum_{s=K+1}^{r_n} c_{s,n}^2. 
			\end{align*}
			This means, as $\lim_{K\to\infty}\limsup_{n\to\infty}\sum_{s=K+1}^{r_n} c_{s, n}^2=0$, $X_n-\sum_{s=1}^K c_{s, n}Y_s \stackrel{P}\to 0$, as $n\to \infty$ followed by $K \to \infty$. However, as $n\to\infty$, $\sum_{s=1}^K c_{s, n}Y_s \to \sum_{s=1}^K c_s Y_s$ (by assumption (a) in Lemma \ref{ccond}), which converges almost surely to $\sum_{s=1}^\infty c_s Y_s$, as $K\to \infty$ (since $\sum_{s=1}^{\infty} c_s^2 \le L$). This proves the result for $\rho=0$.
			
			\item $\rho>0$: Note that by parts \ref{ca} and \ref{cb}, $\sum_{s=1}^{r_{n}} c_{s, n}^2\to \kappa :=\rho^2+\sum_{s=1}^{\infty} c_s^2$ as $n\to \infty$. In this case 
			\begin{align*}
				\lim_{n \rightarrow \infty} \max_{K+1\le s\le n} \frac{c_{s,n}^2}{\sum\limits_{s=K+1}^{r_n} c_{s,n}^2} = \lim_{n \rightarrow \infty} \frac{c_{K+1,n}^2}{\sum\limits_{s=K+1}^{r_n} c_{s,n}^2}  = \frac{c_{K+1}^2}{\kappa-\sum\limits_{s=1}^K c_s^2},
			\end{align*}
			which goes to zero as $K\to \infty$. Here, we used the fact that $\rho^2=\kappa-\sum\limits_{s=1}^{\infty} c_s^2>0$. Thus the constants $\{c_{s,n}\}_{K+1}^{r_n}$ satisfies the H\'ajek-\v{S}id\'ak condition \cite[Theorem 3.3.6]{ss} under the iterated limit. Therefore, under the double limit $n\to \infty$ followed by $K\to \infty$, 
			$\sum_{s=K+1}^{r_{n}} c_{s, n}Y_s$ converges to $\rho Z$. On the other hand, $\sum_{s=1}^K c_{s, n}Y_s$ converges to $\sum_{s=1}^{\infty} c_s Y_s$ under the iterated limit. 
		\end{enumerate}		
	\end{proof}

	We are now ready to derive the  distributional limit of $U_{22,n}+U_{23,n}+U_{33,n}$.  
	
	\begin{proposition} \label{p:lim2} Under Assumptions \ref{asum1} and \ref{asum2}, as $n\to \infty$ followed by $K_1\to\infty$ 
		\begin{align*}
			U_{22,n}+U_{23,n}+U_{33,n} \to Q_2+Q_3
		\end{align*}
		in distribution and in all moments, where $Q_2,$ and $Q_3$ are independent random variables with \begin{itemize}
			\item[--] $Q_2 \sim N(0,\rho^2)$,  where $\rho^2=1-\sum_{s=1}^{\infty} (\sigma_{ss}+\frac12\rho_s^2) \in [0, \infty)$ with $\sigma_{ss}$ and $\rho_s$ as defined in \eqref{as:point} and \eqref{as:eigen}, respectively, and 
			\item[--] $Q_3 \sim \sum_{s=1}^\infty \frac12\rho_s Y_s$, where $\{Y_s\}_{s \geq 1}$ is an i.i.d. collection of $\chi_1^2-1$ random variables. (In particular, $Q_3$ is well defined under Assumption \ref{asum2}). 
		\end{itemize} 
		Furthermore, there exists a constant $C>0$ such that
		\begin{align*}
			\sup_{|t|\le C}\Ex[e^{t(Q_2+Q_3)}] <\infty. 
		\end{align*}
	\end{proposition}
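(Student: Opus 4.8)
\emph{Setup and strategy.} The plan is to work on the event where the vertices $V_2\cup V_3$ have been replaced by i.i.d.\ standard Gaussians (legitimate by Proposition \ref{p:norm}), so that $U_{22,n}+U_{23,n}+U_{33,n}$ becomes exactly $\tfrac{1}{2\sqrt{|E(G_n)|}}(\bm Z_n^{(2,3)})^\top A_{n,K_1}\bm Z_n^{(2,3)}$, a quadratic form in i.i.d.\ Gaussians with the matrix $A_{n,K_1}$ (the adjacency matrix of the truncated graph $G_{n,K_1}$). First I would apply the spectral theorem to $A_{n,K_1}$: since a Gaussian vector is rotationally invariant, $\tfrac{1}{2\sqrt{|E(G_n)|}}(\bm Z_n^{(2,3)})^\top A_{n,K_1}\bm Z_n^{(2,3)} \stackrel{D}{=} \tfrac12\sum_{s=1}^{n-K_1} \tfrac{\lambda_{n,K_1}^{(s)}}{\sqrt{|E(G_n)|}}(G_s^2-1) + \tfrac12 \tfrac{\tr A_{n,K_1}}{\sqrt{|E(G_n)|}}$, where $G_1,G_2,\dots$ are i.i.d.\ $N(0,1)$, using $\tr A_{n,K_1}=0$ so the last term vanishes. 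This reduces the claim to a statement about the weighted sum of centered squares $\tfrac12\sum_s c_{s,n}(G_s^2-1)$ with $c_{s,n}:=\lambda_{n,K_1}^{(s)}/\sqrt{|E(G_n)|}$, and I would verify the three hypotheses of Lemma \ref{ccond} for the array $\{c_{s,n}\}$ with $Y_s := G_s^2-1$ (which are i.i.d.\ mean $0$ variance $2$ — a harmless rescaling of the Lemma's normalization, or absorb $\sqrt2$ into the constants).

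\emph{Verifying the hypotheses of Lemma \ref{ccond}.} Condition (a): the ordering $|c_{1,n}|\ge\cdots$ is the stated ordering of eigenvalues, and $\sum_s c_{s,n}^2 = \tfrac{1}{|E(G_n)|}\tr(A_{n,K_1}^2) = \tfrac{1}{|E(G_n)|}\cdot 2|E(G_{n,K_1})| \le 2$, giving $L=2$. Condition (b): for each fixed $s$, $\lim_{n\to\infty} c_{s,n} = \rho_s$ by the $\bm\rho$-spectral condition \eqref{as:eigen} (taking $K_1$ then $n$ — here the outer limit $K\to\infty$ of Assumption \ref{asum2} is handled after the $n$-limit, which is why the Proposition states the limit as $n\to\infty$ followed by $K_1\to\infty$). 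Condition (c): this is the crux. I would write $\sum_{s=K+1}^{n-K_1} c_{s,n}^2 = \sum_{s=1}^{n-K_1} c_{s,n}^2 - \sum_{s=1}^{K} c_{s,n}^2 = \tfrac{2|E(G_{n,K_1})|}{|E(G_n)|} - \sum_{s=1}^{K}c_{s,n}^2$; the first term must be shown to converge (as $n\to\infty$ then $K_1\to\infty$) to $1 - \sum_{s\ge1}\sigma_{ss}$, because the edges removed when passing from $G_n$ to $G_{n,K_1}$ are exactly those incident to the $K_1$ highest-degree vertices, and $\tfrac{1}{|E(G_n)|}(2|E(G_n)| - 2|E(G_{n,K_1})|) = \tfrac{1}{|E(G_n)|}\sum_{s=1}^{K_1}(2d_s - \#\{\text{edges within }V_1\})$, which by the $\Sigma$-codegree condition and a double-counting/inclusion-exclusion argument converges to $2\sum_{s=1}^{K_1}\sigma_{ss}$ in the relevant limit; meanwhile $\sum_{s=1}^K c_{s,n}^2 \to \sum_{s=1}^K \rho_s^2$. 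Taking $K\to\infty$ then gives $\rho^2 = 1 - \sum_{s\ge1}\sigma_{ss} - \tfrac12\sum_{s\ge1}\rho_s^2$ after accounting for the factor $\tfrac12$ and the variance-$2$ normalization of $G_s^2-1$; one must separately check $\rho^2\ge0$ (it is a limit of nonnegative quantities $\limsup_n\sum_{s>K}c_{s,n}^2$) and $\rho^2<\infty$ (bounded by $2$). Lemma \ref{ccond} then yields $U_{22,n}+U_{23,n}+U_{33,n}\stackrel{D}{\to} \tfrac12\sum_{s\ge1}\rho_s Y_s + \rho Z =: Q_3 + Q_2$ with $Z\sim N(0,1)$ independent of $\{Y_s\}$; independence of $Q_2$ and $Q_3$ is built into the conclusion of Lemma \ref{ccond}.

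\emph{Moment convergence and the exponential moment bound.} For convergence in all moments, I would invoke \eqref{eq:momfin} (established via Lemma \ref{l:mgf}), which gives a uniform-in-$n$, uniform-in-$K_1,K_2$ bound on $\Ex[e^{t U_{22,n}}] + \Ex[e^{tU_{23,n}}] + \Ex[e^{tU_{33,n}}]$ for $|t|\le\delta$; by the Cauchy–Schwarz/AM–GM splitting $e^{t(U_{22}+U_{23}+U_{33})}\le \tfrac13(e^{3tU_{22}}+e^{3tU_{23}}+e^{3tU_{33}})$ one gets uniform exponential integrability of the sum in a neighborhood of $0$, hence uniform integrability of all polynomial powers, hence weak convergence upgrades to moment convergence. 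The same bound, combined with Fatou's lemma applied to the convergent subsequence, gives $\Ex[e^{t(Q_2+Q_3)}] \le \sup_{n,K_1,K_2}\Ex[e^{t(U_{22,n}+U_{23,n}+U_{33,n})}] < \infty$ for $|t|$ small, which is exactly the final assertion with $C := \delta/3$. The main obstacle I anticipate is the bookkeeping in Condition (c) of Lemma \ref{ccond}: relating $\lim_n\tfrac{|E(G_{n,K_1})|}{|E(G_n)|}$ to $\sum_s\sigma_{ss}$ requires carefully using the $\Sigma$-codegree condition together with the fact that $\sigma_{ss}$ is a limit of \emph{degrees} (the diagonal co-degree $\sum_v a_{s,v}a_{v,s} = \sum_v a_{s,v} = d_s$), and handling the interplay of the three nested limits ($n\to\infty$, then $K_1\to\infty$, with Assumption \ref{asum2}'s own $K\to\infty$) without circularity.
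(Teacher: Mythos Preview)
Your proposal is correct and follows essentially the same approach as the paper's proof: diagonalize the Gaussian quadratic form via the spectral theorem, apply Lemma \ref{ccond} to the resulting weighted sum of centered $\chi_1^2$ variables, and verify condition (c) by relating $|E(G_{n,K_1})|/|E(G_n)|$ to $\sum_s \sigma_{ss}$ through the degree-counting identity (the paper writes this as $2|E(G_{n,K})| = 2|E(G_n)| - 2\sum_{s=1}^K d_s + \tr(A_{11}A_{11}^\top)$, which is exactly your inclusion--exclusion, and then uses $d_s/|E(G_n)|\to\sigma_{ss}$). The moment convergence and mgf bound via \eqref{eq:momfin} and Fatou's lemma are likewise identical; the paper uses H\"older's inequality where you use AM--GM, but this is immaterial.
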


	
	\begin{proof}[Proof of Proposition \ref{p:lim2}] Following \eqref{eq:U23}, note that
		\begin{align}\label{req}
			U_{22,n}+U_{23,n}+U_{33,n} &= \frac1{2\sqrt{E(G_n)}}((\bm{Z}_{n}^{(2)})^{\top},(\bm{Z}_{n}^{(3)})^{\top})\begin{pmatrix}
				A_{22} & A_{23} \\ A_{32} & A_{33} 
			\end{pmatrix} ((\bm{Z}_{n}^{(2)})^{\top},(\bm{Z}_{n}^{(3)})^{\top})^{\top} . 
		\end{align} 
		For simplicity, let us write $K=\lfloor K_1\rfloor$ for the rest of the proof. Let $G_{n,K}$ be the induced graph on vertex set $[K+1,n]$ with adjacency matrix $$A^{(K)}:=\begin{pmatrix}
			A_{22} & A_{23} \\ A_{32} & A_{33} 
		\end{pmatrix} . $$
		By the spectral decomposition, we can write $A^{(K)}=P^{\top}\Lambda P$, where $\Lambda=\operatorname{diag}(\lambda_{n,K}^{(1)},\lambda_{n,K}^{(2)},\ldots, \lambda_{n,K}^{(n-K)})$ are the eigenvalues of $A^{(K)}$ (such that $|\lambda_{n,K}^{(1)}| \ge |\lambda_{n,K}^{(2)}| \ge \cdots \ge |\lambda_{n,K}^{(n-K)}|$) and $P$ the matrix of eigenvectors.  Note that $\tr(A^{(K)})=0$ and 
		\begin{align*}
			\tr(A^{(K)}(A^{(K)})^\top)=\sum_{s=1}^{n-K}(\lambda_{n,K}^{(s)})^2=2|E(G_{n,K})|. 
		\end{align*}  
		Let $c_{s,n}= \frac12 (E(G_n))^{-\frac{1}{2}}\lambda_{n,K}^{(s)}$. Then by \eqref{req} and the spectral decomposition, we see that $$U_{22,n}+U_{23,n}+U_{33,n}=\sum_{s=1}^{n-K} c_{s,n}Y_s,$$ where $\{Y_s\}_{\geq 1}$ are centered $\chi_1^2$ random variables. Note that $\sum_{s=1}^{n-K} c_{s,n} =0$ and $\sum_{s=1}^{n-K} c_{s,n}^2 \le 1$. Furthermore, recalling that  $A_{11}$ is the adjacency matrix corresponding to the set of vertices $[[1,K]]$, we have the following identity: For $m \geq 1$
		\begin{align*}
			\sum_{s=m+1}^{n-K} c_{s, n}^2 & = \frac1{2\eg}\left[2|E(G_{n,K})| -\sum_{s=1}^{m} (\lambda_{n,K}^{(s)})^2 \right]  \\ 
			& = \frac1{2\eg}\left[\left(2|E(G_{n})|-2\sum_{s=1}^{K} d_s+\tr(A_{11}A_{11}^{\top})\right) -\sum_{s=1}^{m} (\lambda_{n,K}^{(s)})^2 \right].  
		\end{align*}
		Then by \eqref{as:point} and \eqref{as:eigen}, as $n\to\infty$ followed by $K\to\infty$ and then $m\to\infty$ we have
		\begin{align*}
			\sum_{s=m+1}^{n-K} c_{s,n}^2 \to 1-\sum_{s=1}^{\infty} (\sigma_{ss}+ \tfrac{1}{2} \rho_s^2) .
		\end{align*}
		Thus $c_{s,n}$ satisfy conditions \ref{ca} and \ref{cb} of Lemma \ref{ccond} under the iterated limit. Hence, by Lemma \ref{ccond},  $U_{22,n}+U_{23,n}+U_{33,n} \stackrel{D} \to Q_2+Q_3 $,  where $Q_2$ and $Q_3$ are as defined in Lemma \ref{ccond}. 
		
		By \eqref{eq:momfin}, the exponential moments of $U_{22,n},U_{23,n}$, and $U_{33,n}$ are uniformly bounded in $n$, in a neighborhood of zero. 
		Thus convergence in all moments is guaranteed by uniform integrability. This also implies the finiteness of the mgf of $Q_2+Q_3$, since by Fatou's Lemma and H\"older's inequality, 
		\begin{align*} 
			\left(\Ex [e^{t (Q_2+Q_3)}]\right)^3 & \le  \sup_{n,K_1,K_2\ge 1} \left(\Ex [e^{t (U_{22,n}+U_{23,n}+U_{33,n}) }]\right)^3 \nonumber \\ 
			& \leq  \sup_{n,K_1,K_2\ge 1} \left(\Ex [e^{3t U_{22,n} }]\right) \left(\Ex [e^{3t U_{23,n} }]\right) \left(\Ex [e^{3t U_{33,n} }]\right) <\infty ,
		\end{align*} 
		where the last inequality  holds for $|t|$ small enough via \eqref{eq:momfin}. 
	\end{proof}

	\subsection{Completing the Proofs of Theorem \ref{thm:main} and Theorem \ref{thm:converse}}\label{sec:minfty}

	We now combine the results of the previous sections and complete the proof of Theorem \ref{thm:main}. One last ingredient of our proof is the existence of $M\to \infty$ limit of $Q_{1, M}$ defined in Proposition \ref{p:lim1}, which we record in Lemma \ref{mim} below. Towards this, denote 
	$$V_M:= (\bm{X}_{\infty, M})^\top\Sigma\bm{X}_{\infty, M},$$
	which is well-defined by Lemma \ref{l1lim}, and recall from Proposition \ref{p:lim1} that $Q_{1, M}\sim  N(0, V_M)$.

	\begin{lemma}\label{mim} The sequence of random variables $\{ V_M \}_{M \geq 1}$ {converges} in $L^1$ to some random variable $V:= \bm{X}_{\infty}^{\top}\Sigma\bm{X}_{\infty}$. This implies, as $M\to \infty$, 
		\begin{align*}
			Q_{1, M} \to Q_1
		\end{align*}
		in distribution, where $Q_1\sim N(0,V)$ is a well defined random variable with finite second moment.
	\end{lemma}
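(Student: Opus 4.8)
The plan is to establish $L^1$ convergence of $V_M = \bm{X}_{\infty, M}^\top \Sigma \bm{X}_{\infty, M}$ as $M \to \infty$ and then deduce the distributional convergence $Q_{1,M} \to Q_1$ via characteristic functions.

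First I would reduce the problem to showing that the sequence $\{V_M\}_{M \geq 1}$ is Cauchy in $L^1$, since $L^1$ is complete. To this end, recall from Lemma \ref{l1lim} that $V_M = \lim_{K \to \infty} \bm{X}_{K, M}^\top \Sigma_K \bm{X}_{K, M}$ in $L^1$, where $\bm{X}_{K, M} = (X_{1, M}, \ldots, X_{K, M})^\top$ and $\Sigma_K = (\sigma_{st})_{1 \le s, t \le K}$. For fixed $K$, as $M \to \infty$, each $X_{u, M} \to X_u$ in $L^2$ (this follows from $b_M \to 1$, $a_M \to 0$, and $\vr[X_1\ind\{|X_1| > M\}] \to 0$); hence the finite quadratic form $\bm{X}_{K, M}^\top \Sigma_K \bm{X}_{K, M}$ converges in $L^1$ to $\bm{X}_K^\top \Sigma_K \bm{X}_K$. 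The key estimate is a uniform-in-$M$ tail bound: for $M$ large (so $b_M \ge \frac12$, say) and $K' > K$,
\begin{align*}
	\Ex\left| \bm{X}_{K', M}^\top \Sigma_{K'} \bm{X}_{K', M} - \bm{X}_{K, M}^\top \Sigma_{K} \bm{X}_{K, M} \right| \le \sum_{s = K+1}^{K'} \sigma_{ss} \Ex[X_{s, M}^2] + \sqrt{ \sum_{1 \le s \neq t \le K'} \sigma_{st}^2 - \sum_{1 \le s \neq t \le K} \sigma_{st}^2 },
\end{align*}
exactly as in the proof of Lemma \ref{l1lim}, using $\Ex[X_{s, M}^2] = 1$ and orthogonality of the $X_{s, M}$. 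Since $\sum_s \sigma_{ss} \le 2$ and $\sum_{s, t} \sigma_{st}^2 \le 4$ by \eqref{eq:sumbd} and \eqref{eq:sqbd}, the right-hand side is bounded uniformly in $M$ and $K'$, and vanishes as $K \to \infty$. Combining this uniform tail control with the pointwise-in-$K$ convergence in $M$ yields, by a standard $\varepsilon/3$ argument, that $\{V_M\}$ is Cauchy in $L^1$; denote the limit by $V = \bm{X}_\infty^\top \Sigma \bm{X}_\infty$. The same tail bound applied with $M = \infty$ shows $\Ex[V] \le 2 < \infty$, so $V$ has finite first moment; finiteness of the second moment of $Q_1$ is then automatic since $\Ex[Q_1^2] = \Ex[V]$.

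Finally, to pass from $L^1$ convergence of variances to distributional convergence of the normal variance mixtures, I would use that $Q_{1,M} \sim N(0, V_M)$ has characteristic function $\Ex[e^{\i t Q_{1,M}}] = \Ex[e^{-\frac12 t^2 V_M}]$. Since $x \mapsto e^{-\frac12 t^2 x}$ is bounded and Lipschitz on $[0, \infty)$, $L^1$ convergence $V_M \to V$ forces $\Ex[e^{-\frac12 t^2 V_M}] \to \Ex[e^{-\frac12 t^2 V}]$ for every $t \in \R$; by L\'evy's continuity theorem, $Q_{1,M} \to Q_1 \sim N(0, V)$ in distribution.

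I expect the main obstacle to be the interchange of the two limits $K \to \infty$ and $M \to \infty$: one must make rigorous that the $L^1$ Cauchy estimate for $\bm{X}_{K', M}^\top \Sigma_{K'} \bm{X}_{K', M} - \bm{X}_{K, M}^\top \Sigma_K \bm{X}_{K, M}$ holds with constants independent of $M$, and then combine it with the (easy, fixed-$K$) convergence in $M$ to get a genuine Cauchy property for $\{V_M\}$. Everything else — the moment bounds \eqref{eq:sumbd}, \eqref{eq:sqbd}, the properties $a_M \to 0$, $b_M \to 1$, and the continuity-theorem step — is routine.
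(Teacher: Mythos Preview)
Your proposal is correct and takes a genuinely different route from the paper's own proof. The paper shows $\{V_M\}$ is Cauchy in $L^1$ by directly estimating $\Ex|V_{M+N}-V_M|$: it passes to the finite-$K$ approximants via Fatou, decomposes the difference into diagonal and off-diagonal parts, and then carries out explicit computations to show both $\Ex|X_{1,M+N}^2-X_{1,M}^2|$ and $\Ex[(X_{1,M+N}-X_{1,M})^2]$ vanish as $M\to\infty$ uniformly in $N\ge 0$. Your approach instead exploits that the tail estimate from Lemma~\ref{l1lim} is uniform in $M$ (since $\Ex[X_{s,M}^2]=1$ and the $\{X_{s,M}\}$ are orthogonal regardless of $M$), and then combines this with the easy fixed-$K$ convergence $\bm{X}_{K,M}^\top\Sigma_K\bm{X}_{K,M}\to\bm{X}_K^\top\Sigma_K\bm{X}_K$ in $L^1$ via an $\varepsilon/3$ interchange-of-limits argument. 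Your route is more modular---it reuses the bound from Lemma~\ref{l1lim} verbatim and avoids the somewhat tedious truncation algebra in \eqref{eq:unic} and \eqref{eq:T12}---while the paper's direct approach makes the dependence on the truncation parameters fully explicit. Both lead to the same characteristic-function step for $Q_{1,M}\to Q_1$, which is routine either way.
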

	
	\begin{proof}
		Note that $\Ex[e^{\i tQ_{1, M}}]=\Ex [e^{-\frac12t^2V_M}]$, since $Q_{1, M}\sim  N(0, V_M)$. Furthermore, by Lemma \ref{p:lim1}, $V_M$ is the $L^1$ limit of $V_{K_1,M}=\sum_{1\le u, v \le K_1} \sigma_{u v}X_{u, M}X_{v, M}$ (recall \eqref{eq:V12M}). We first claim that $V_{M}$ is Cauchy in $L^1$. To this end, observe that for each $M,N, K_1>0$ we have
		\begin{align*}
			V_{K_1,M+N}-V_{K_1,M} & =\sum_{u=1}^{K_1} \sigma_{uu}(X_{u,M+N}^2 - X_{u, M}^2 )+\sum_{1\le u \neq v \le K_1} \sigma_{u v} X_{v,M}(X_{u,M+N}-X_{u, M})\\ & \hspace{2cm}+\sum_{1\le u \neq v \le K_1} \sigma_{u v} X_{v, M+N}(X_{u, M+N}-X_{u, M}).
		\end{align*} 
		Thus by Fatou's Lemma followed by {the} Cauchy-Schwarz inequality, 
		\begin{align*}
			\Ex|V_{M+N}-V_{M}| & \le \liminf_{K_1 \to\infty} \Ex|V_{K_1,M+N}-V_{K_1,M}| \\ & \le \Ex \left|X_{1,M+N}^2-X_{1, M}^2\right|\sum_{u=1}^{\infty}\sigma_{u u} \\ & \hspace{1cm}+\liminf_{ K_1 \to\infty } \sqrt{\Ex\bigg[\bigg(\sum_{1\le u \neq v \le K_1 } \sigma_{u v} X_{v,M}(X_{u,M+N}-X_{u, M})\bigg)^2\bigg]} \\ & \hspace{2cm}  + \liminf_{ K_1 \to\infty }\sqrt{\Ex\bigg[\bigg(\sum_{1\le u \neq v \le K_1 } \sigma_{uv} X_{u,M+N}(X_{v,M+N}-X_{v,M})\bigg)^2\bigg]} \\ & = \Ex \left|X_{1,M+N}^2-X_{1, M}^2\right|\sum_{u=1}^{\infty}\sigma_{uu} + 2\sqrt{\Ex\bigg[(X_{1,M+N}-X_{1, M})^2\bigg]\sum_{1\le u \neq v <\infty} \sigma_{u v}^2}\\ & \le 2\Ex \left|X_{1,M+N}^2-X_{1, M}^2\right| + 2\sqrt{4\Ex\bigg[(X_{1,M+N}-X_{1, M})^2\bigg]}
		\end{align*} 
		where the last inequality follows from \eqref{eq:sumbd} and \eqref{eq:sqbd}. Thus, to conclude $V_M$ is Cauchy in $L^1$, it suffices to show 
		\begin{align}\label{eq:unic}
			\lim_{M\to \infty} \Ex\bigg[(X_{1,M+N}-X_{1, M})^2\bigg] = 0 \quad \text{ and } \quad  \lim_{M\to \infty} \Ex\left|X_{1,M+N}^2-X_{1, M}^2\right| =  0, 
		\end{align} 
		uniformly in $N\ge 0$. For this, recall the definition of the truncation from \eqref{def:trunc}.Then for the first term in \eqref{eq:unic} a simple computation shows that
		\begin{align*}
			& \Ex\bigg[(X_{1,M+N}-X_{1, M})^2\bigg] \\ & =2-2\Ex[X_{1,M+N}X_{1, M}] \\ & =2-2b_M^{-\frac{1}{2}}b_{M+N}^{-\frac{1}{2}}\Ex[(X_1\ind\{|X_1|\le M+N\}-a_{M+N})(X_1\ind\{|X_1|\le M\}-a_{M})] \\ & =2-2b_M^{-\frac{1}{2}}b_{M+N}^{-\frac{1}{2}}\left[\Ex[X_1^2\ind\{|X_1|\le M\}]-a_{M}a_{M+N}\right] \\ & =2-2b_M^{-\frac{1}{2}}b_{M+N}^{-\frac{1}{2}}\left[b_M+a_M^2-a_{M}a_{M+N}\right] .
		\end{align*} 
		Clearly, as $M\to \infty$, uniformly in $N\ge 0$, $b_M,b_{M+N} \to 1$ and $a_M,a_{M+N}\to 0$. Thus, following the above computation, 
		$$\lim_{M\to \infty} \Ex\bigg[(X_{1,M+N}-X_{1, M})^2\bigg] = 0, $$ 
		uniformly in $N\ge 0$.  
		
		For the second term in \eqref{eq:unic}, by triangle inequality we have
		\begin{align}\label{eq:T12}
			\Ex \left|X_{1,M+N}^2-X_{1, M}^2\right| & \le  T_1 + T_2 , 
		\end{align}
		where 
		$$T_1 := b_M^{-1}\Ex\left| (X_1\ind\{|X_1|\le M+N\}-a_{M+N})^2-(X_1\ind\{|X_1|\le M\}-a_M)^2\right| $$
		and 
		$$T_2 := \left|1- \frac{b_{M+N}}{b_{M}}\right|\Ex [X_{1,M+N}^2].$$ 
		Clearly, $T_2 \rightarrow 0$, as $M\to\infty$, uniformly in $N\ge 0$. For $T_1$ using the identity $x^2-y^2=(x+y)(x-y)$ and applying Cauchy-Schwarz inequality multiple times gives 
		\begin{align*}
			T_1 
			& \le b_M^{-1}\sqrt{2 \left(\Ex[X_1^2\ind \{M<|X_1|\le M+N\}]+(a_M-a_{M+N})^2 \right)} \\ & \hspace{2cm} \cdot \sqrt{3\left(\Ex[X_1^2\ind\{|X_1|\le M+N\}]+\Ex[X_1^2\ind\{|X_1|\le M\}]+ (a_M+a_{M+N})^2\right)} 
		\end{align*}
		Observe that the term inside the first square root above goes to zero as $M\to\infty$, uniformly in $N\ge 0$. The rest of the factors are bounded for all $M\ge M_0, N\ge 0$ for some $M_0$ (so that $b_M^{-1}$ is well defined for $M\ge M_0$). This shows  (recall \eqref{eq:T12})
		$$\lim_{M\to \infty} \Ex \left|X_{1,M+N}^2-X_{1, M}^2\right| =0 ,$$ uniformly in $N\ge 0$. Thus, $V_M$ is Cauchy in $L^1$. Hence $V_{M} \to V$ in $L^1$ where $V:=\bm{X}_{\infty}^{\top}\Sigma\bm{X}_{\infty}$. This implies, $\Ex[e^{-t^2V_M/2}] \to \Ex[e^{-t^2V/2}]$ for each $t>0$. This shows $Q_{1, M} \to Q_1$ in distribution.
	\end{proof}
	
	\begin{proof}[Completing the Proof of Theorem \ref{thm:main}] 
		From Proposition \ref{p:lim1} and \ref{p:lim2} we know $U_{13,n,M}$ and $U_{22,n}+U_{23,n}+U_{33,n}$ converges to $Q_{1, M}$ and $Q_2+Q_3$ in all moments, for every fixed $M$ large enough. Thus, appealing to  Proposition \ref{p:momind}, we see that for positive integers $a, b$,  
		\begin{align*}
			\lim_{K_1, K_2\to \infty}\limsup_{n\to\infty}\Ex[U_{13,n,M}^a(U_{22,n}+U_{23,n}+U_{33,n})^b]=\Ex[(Q_{1, M})^a(Q_2+Q_3)^b].
		\end{align*}
		As $Q_{1, M}$ and $Q_2+Q_3$ have finite mgfs in an interval containing zero (Proposition \ref{p:lim1} and \ref{p:lim2}), moments of $Q_{1, M}$ and $Q_2+Q_3$ uniquely determine their distribution. Thus, as $n\to\infty$, followed by $K_1, K_2\to \infty$, $U_{n, M}$ converges weakly (and in all moments) to $Q_{1, M}+Q_2+Q_3$.  Then using Lemma \ref{lm:U1112} and Proposition \ref{p:norm} we get 
		\begin{align}\label{eq:trunc1}
			S_{G_n}\big(\bm{X}_{n,M}\big) \stackrel{D} \rightarrow Q_{1, M}+Q_2+Q_3,
		\end{align}
		as $n\to\infty$. Finally, since as $M\to\infty$, $Q_{1, M} \to Q_1$  weakly (Lemma \ref{mim}) and $Q_{1, M}$ is independent of $Q_2+Q_3$, using Lemma \ref{l:trunc} we conclude 
		\begin{align}\label{eq:trunc2}
			S_{G_n}\big(\bm{X}_{n}\big) \stackrel{D} \rightarrow Q_1+Q_2+Q_3,
		\end{align}
		where $Q_1, Q_2, Q_3$ are as defined in Theorem \ref{thm:main}. 
	\end{proof}

	\begin{proof}[Completing the Proof of Theorem \ref{thm:converse}] Suppose $S_{G_n}(\bm{X}_n)$ converges weakly to some random variable $Q'$. We will show that $Q'\stackrel{D}{=}Q$, where $Q$ is defined in \eqref{eq:main}. For this note that for every $n$, the infinite matrix $$\left(\left(\frac{1}{|E(G_n)|}\sum_{v=1}^na_{s,v}a_{v,t}\right)\right)_{s,t\ge 1}$$ has entries in $[0,1]$, as 
		$\sum_{v=1}^na_{s,v}a_{v,t}\le \min\{d_s, d_t\} \le |E(G_n)|$.  Similarly, the entries of the infinite vector $$\left(\frac{1}{\sqrt{|E(G_n)|}} \lambda_{n,K}^{(s)}\right)_{s\ge 1}$$ take values in $[-\sqrt{2},\sqrt{2}]$, as $|\lambda_{n,K}^{(s)}|\le \sum_{s=1}^{n-K}(\lambda_{s,K}^{(s)})^2=2|E(G_{n,K})|\le 2|E(G_n)|$. 
		Since $[0,1]^{\mathbb{N}\times \mathbb{N}}\times [-\sqrt{2},\sqrt{2}]^\mathbb{N}$ is compact under the product topology, by Tychonoff's theorem, it follows that there is a subsequence along which, 
		$$\frac{1}{|E(G_n)|}\sum_{v=1}^na_{s,v}a_{v,t}\to \sigma_{s,t},\quad  \text{and} \quad \frac{1}{\sqrt{|E(G_n)|}} \lambda_{n,K}^{(s)}\to \rho_{K,s}, $$
		for every $s, t\ge 1$, for some $\sigma_{s,t}\in [0,1]$ and $\rho_{K,s}\in [-\sqrt{2},\sqrt{2}]$. By another application of Tychonoff's theorem, there is a subsequence in $K$ along which, 
		$$\lim_{K\to\infty}\rho_{K,s}=\rho_s,$$
		for all $s\ge 1$, for some $\rho_s\in [-\sqrt{2},\sqrt{2}]$. Thus, both Assumptions  \ref{asum1} and \ref{asum2} hold along a subsequence in $n$, and a subsequence in $K$. Thus, by Theorem \ref{thm:main}, along this subsequence we have $S_{G_{n}}(\bm{X}_{n})\stackrel{D}{\to} Q$ where $Q$ is defined in \eqref{eq:main}. This implies, $Q'\stackrel{D}{=}Q$, thus completing the proof.
	\end{proof}
	
	\section{Proofs of Theorem \ref{thm:moment4} and Proposition \ref{ppn:nrad}}
	\label{sec:pfmoment4}
	
	Note that Theorem \ref{thm:moment4} follows directly from Propostion \ref{ppn:nrad}. Hence, it suffices to prove Proposition \ref{ppn:nrad}. The proof is presented over two sections: In Section \ref{sec:pfnrad1} we consider the case where $F$ is not Rademacher. The Rademacher case is  in Section \ref{sec:pfnrad2}. 
	
	\subsection{Proof of Proposition \ref{ppn:nrad}~(1)}
	\label{sec:pfnrad1}
	
	As $X_1$ does not follow the Rademacher distribution, $\vr[X_1^2]>0$. Thus,  for large enough $M$ we have $\vr[X_{1, M}^2]>0$ as well. Hereafter, we will fix such a $M$. 
	First we will show that conditions \ref{ba} and \ref{eq:4m}  are equivalent.   Using the definition of $S_{G_n}$ from \eqref{def:SG_n} we have \begin{align}\label{eq:expand}
		\Ex[(S_{G_n}(\bm{X}_{n, M}))^4]=\frac1{\eg^2}\sum_{(u_1, v_1),(u_2, v_2), (u_3, v_3), (u_4, v_4)\in E(G_n) }\Ex\left[\prod_{\ell=1}^4 X_{u_{\ell}, M}X_{v_{\ell}, M}\right] .
	\end{align}
	Now, consider the multigraph formed by the edges $(u_1, v_1),(u_2, v_2), (u_3, v_3), (u_4, v_4)$. Note that whenever a vertex appears only once in the multigraph, the corresponding expectation is zero. 	
	\begin{figure*}[h]
		\centering
		\begin{minipage}[c]{1.0\textwidth}
			\centering
			\includegraphics[width=5.25in]
			{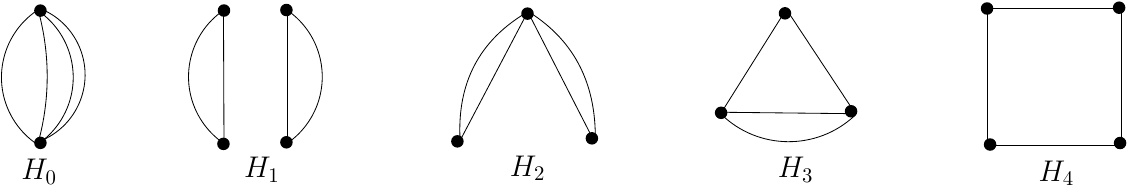}\\
		\end{minipage}
		\caption{\small{The graphs which contribute to the fourth moment.}}
		\label{fig:momentgraphs}
	\end{figure*}
	Hence, the only multigraphs which have a non-zero contribution to the sum in the RHS of \eqref{eq:expand} are the ones shown in Figure \ref{fig:momentgraphs}. 
	Note that, for $H_0$ and $H_3$ as in Figure \ref{fig:momentgraphs}, 
	$$N(H_0, G_n) \lesssim N(K_2,G_n)=\eg$$ and $$N(H_3, G_n) \lesssim N(\triangle,G_n) \lesssim \eg^{3/2} . $$ 
	Also, for the multigraphs $H_1$ and $H_2$, one needs to divide the 4 edges into 2 groups which can be done in ${4 \choose 2} =6$ possible ways. Thus, the RHS of \eqref{eq:expand} simplifies to
	\begin{align*}
		\Ex[S_{G_n}(\bm{X}_{n, M})^4] = 6 \frac{N(K_2 \bigsqcup K_2,G_n)}{\eg^2} +6\frac{N(K_{1,2},G_n)}{\eg^2}\Ex [X_{1, M}^4]+\frac{N(C_4,G_n)}{\eg^2}+o(1) , 
	\end{align*}
	where $K_2 \bigsqcup K_2$ denotes the disjoint union of two edges. (Note that $K_2 \bigsqcup K_2$ is the simple graph underlying $H_1$, $K_{1, 2}$ is the simple graph underlying $H_2$, and $H_4=C_4$ is the 4-cycle.)  Clearly, as $N(K_2 \bigsqcup K_2, G_n)+N(K_{1,2},G_n)=\binom{\eg}{2}$.  Therefore, as $n \to\infty$, 
	\begin{align}\label{eq:nneg0}
		\Ex[S_{G_n}(\bm{X}_{n, M})^4]  = 3+6\frac{N(K_{1,2},G_n)}{\eg^2}\vr[X_{1, M}^2 ]+\frac{N(C_4,G_n)}{\eg^2}+o(1).
	\end{align}
	It is well known that the number of homomorphisms of the 4-cycle in a graph can be expressed as the 
	trace of the $4$-th power of its adjacency matrix (see Example 5.11 in \cite{lov}). This implies, 
	\begin{align} \label{eq:4con} 
		\frac1{\eg^2}\left[ N(K_{1,2},G_n)+N(C_4,G_n)\right] & \asymp \frac1{\eg^2}\left[N(K_{2},G_n)+N(K_{1,2},G_n)+N(C_4,G_n)\right] \nonumber \\ 
		& \asymp	 \sum_{u=1}^{n} c_{u, n}^4 ,  
	\end{align} 
	where $c_{u, n}=\frac{1}{\sqrt{\eg}} \lambda_{u,n}$, for $1 \leq u \leq n$.  Hence, if \ref{ba} holds, then since $$\sum_{u=1}^{n} c_{u, n}^4 \le \max_{1 \leq u \leq n} c_{u, n}^2 \sum_{u=1}^{n} c_{u, n}^2 \le 2\max_{1 \leq u \leq n} c_{u, n}^2 , $$ 
	the RHS of \eqref{eq:4con} goes to zero. Therefore, via \eqref{eq:nneg0}, $\Ex[ S_{G_n}(\bm{X}_{n, M})^4]\to 3$. This shows \ref{ba} implies \ref{eq:4m}. 	Next, suppose \ref{eq:4m} holds. Then, as $\vr[X_{1, M}^2]>0$, by \eqref{eq:nneg0} we must have 
	$$N(K_{1,2},G_n)=o(\eg^2) \quad \text{ and } \quad N(C_4,G_n)=o(\eg^2).$$ 
	Hence, the RHS ~of \eqref{eq:4con} to goes to zero as $n \rightarrow \infty$ and, since $\max_{1 \leq u \leq n} |c_{u, n}| \leq \sum_{u=1}^{n} c_{u, n}^4$, \ref{ba} holds. 
	
	Next, we show \ref{ba} implies \ref{bc}. We denote by $A= A(G_n)$ the adjacency matrix of the graph $G_n$. For this note that for each $1 \leq u \leq n$ and $\bm e_u$ denoting the $u$-th basis vector, 
	\begin{align*}
		\frac{d_u}{\eg}=\frac1{\eg}\sum_{v =1}^n a_{u, v}  = \frac1{\eg} \bm{e}_u^{\top} A^\top A \bm{e}_u    & \le \frac1{\eg}\max_{\norm{\bm{w}}=1} (\bm{w}^{\top}A^2 \bm{w}) \\ 
		& = \max_{1\le u \le n} c_{u, n}^2 \to 0.
	\end{align*}
	Thus, for each $u, v \in V(G_n)$, by Cauchy Schwarz inequality, 
	\begin{align}\label{eq:cs}
		\frac1{\eg}\sum_{w=1}^n a_{u, w}a_{w, v} \le \frac{1}{\eg}\sqrt{\sum_{w=1}^n a_{u, w} \sum_{w=1}^n a_{w, v}} =  \frac{1}{\eg}\sqrt{ d_u d_v } \to 0.
	\end{align}
	This implies, \eqref{as:point} holds with $\sigma_{st}=0$ for all $s, t \geq 1$. 
	Also, under \ref{ba}, using the eigenvalue interlacing theorem \cite[Corollary 2.5.2]{eigenvalues_book}, it follows that for all $s \geq 1$, 
	$$\frac{1}{\sqrt{\eg}} |\lambda_{n, K}^{(s)}|  \leq \max_{1 \leq u \leq n} |c_{u, n}| \rightarrow 0.$$
	This implies \eqref{as:eigen}  holds with $\rho_s=0$, for all $s \geq 1$. Hence, by Theorem \ref{thm:main}, $S_{G_n}(X_n) \to Q_2 \sim N(0,1)$. This establishes \ref{bc}.

	Finally, we show that \ref{bc} implies \ref{eq:4m}. As in the proof of Theorem  \ref{thm:converse}, there exists a subsequence $\{n_m\}_{m \geq 1}$ such that \eqref{as:point} holds for some matrix $\Sigma$ along this subsequence. { Now, since  \ref{bc} holds, the variance of $Q_1$ must be constant. } This implies, $\bm X_{\infty}^\top \Sigma \bm X_{\infty}$, defined via Lemma \ref{l1lim}, has to be constant. Setting $\bm X_{-1}=(X_2,X_3,\ldots)$ note that 
	\begin{align}\label{eq:cbpf}
		\bm X_{\infty}^\top \Sigma \bm X_{\infty}=\sigma_{11}X_1^2+ X_1f(\bm{X}_{-1})+g(\bm{X}_{-1}),
	\end{align}
	for some functions $f$ and $g$. {If $\sigma_{11} > 0$ and the  support of $X_1$ has at least three distinct points, then given $\bm{X}_{-1}$, the random variable in the RHS of \eqref{eq:cbpf} can not be a constant (since a quadratic polynomial has at most two roots).} On the other hand, if support of $X_1$ has at most two points, under the assumption $\Ex[X_1]=0$ and $\vr[X_1]=1$, it forces $X_1$ to be Rademacher which we have ruled out in this case. Therefore, the random variable in the RHS of \eqref{eq:cbpf} conditioned on $\bm{X}_{-1}$ is not constant, whenever $\sigma_{11} > 0$. Thus, we can assume that $\sigma_{11} =0$. In fact, by symmetry, this implies $\sigma_{ss}=0$, for all $s \geq 1$. Now, applying the Cauchy-Schwarz inequality as in \eqref{eq:cs} shows $\sigma_{st}=0$, for all $s, t \geq 1$. Thus, the random variable $Q_{1, M}$ defined in Proposition \ref{p:lim1} is zero. Therefore, by \eqref{eq:trunc1} and \eqref{eq:trunc2} we see that for large enough $M>0$,  both $S_{G_n}(\bm{X}_{n, M})$ and $S_{G_n}(\bm{X}_{n})$ converges to $Q_2+Q_3$ as defined in Theorem \ref{thm:main}. On the other hand, by the hypothesis \ref{bc}, we have $Q_2+Q_3 \stackrel{D}{=} N(0,1)$. Denote $\tilde A = \frac1{2\sqrt{\eg}}A(G_n)$ and recall that 
	\begin{align}\label{eq:SGnXM}
		S_{G_n}(\bm{X}_{n, M}) := \bm{X}_{n,M}^{\top} \tilde A \bm{X}_{n,M}.
	\end{align} 
	Note that for each fixed $M$, the truncated random variables are centered, bounded, and hence, sub-Gaussian, and the matrix $\tilde A$ satisfies the conditions of  Lemma \ref{l:mgf}. Thus by Lemma \ref{l:mgf}, for each fixed $M>0$, the mgf of $S_{G_n}(\bm{X}_{n, M})$ is uniformly bounded in $n$, in a neighborhood of zero. This implies, $S_{G_n}(\bm{X}_{n, M})$ converges in moments to $N(0,1)$, which proves \ref{eq:4m}. 
	
	Finally, if one assumes $\Ex[X_1^4]<\infty$, to show that \ref{ba}, \ref{eq:4m}, and \ref{bc}, is equivalent to $\Ex[(S_{G_n}(\bm{X}_{n}))^4] \rightarrow 3$, it suffices to show that 
	\begin{align*}
		\lim_{M\to\infty}\lim_{n\to \infty} \left[\Ex[S_{G_n}(\bm{X}_{n})^4]-\Ex[S_{G_n}(\bm{X}_{n,M})^4]\right]=0. 
	\end{align*}
	To  this end, by calculations similar to \eqref{eq:nneg0} gives
	\begin{align*}
		\Ex[S_{G_n}(\bm{X}_{n})^4]  = 3+6\frac{N(K_{1,2},G_n)}{\eg^2}\vr[X_{1}^2 ]+\frac{N(C_4,G_n)}{\eg^2}+o(1).
	\end{align*}
	Comparing the above display with \eqref{eq:nneg0}, it suffices to show that $\vr[X_{1,M}^2] \to \vr[X_1^2]$, as $M\to\infty$. This follows on using Dominated Convergence Theorem on letting $M\to\infty$, as $\Ex[X_1^4]<\infty$.

	\subsection{Proof of Proposition \ref{ppn:nrad}~(2)}
	\label{sec:pfnrad2}
	
	As $F$ is Rademacher and hence, sub-Gaussian, and the matrix $\tilde A$ (as defined above \eqref{eq:SGnXM}) satisfies the assumptions of Lemma \ref{l:mgf}. Then there exists $\delta>0$ such that $$\sup_{|t|\le \delta} \sup_{n\ge 1}\Ex\left[e^{tS_{G_n}(\bm{X}_n)}\right] < \infty.$$  Thus \ref{bcr} implies \ref{eq:4mr} by the boundedness of all moments. Also, since $\vr[X_1^2]=0$ when $X_1$ is Rademacher, \ref{eq:4mr} implies \ref{bar} follows from \eqref{eq:nneg0}. Therefore, it remains to show \ref{bar} implies \ref{bcr}. To this end, for $1 \leq u \ne v \leq n$, define $d_{u, v} := \sum_{w=1}^n a_{u, w}a_{w, v}$ as the number of common neighbors of the vertices $u$ and $v$ (the co-degree of $u$ and $v$). Note that $$N(C_4, G_n) \asymp  \sum_{1 \leq u \ne v \leq n} \binom{d_{u, v}}{2}.$$ Therefore, by \ref{bar}, 
	$$ \frac{1}{|E(G_n)|^2} \max_{1 \leq u \ne v \leq n} d_{u, v}^2 \lesssim 
	\frac{1}{|E(G_n)|^2} \sum_{1 \leq u \ne v \leq n} \binom{d_{u, v}}{2}	\lesssim \frac{N(C_4,G_n)}{\eg^2} \rightarrow 0 .$$
	Referring to \eqref{as:point}, this implies, $\sigma_{st}=0$ for $s\neq t$. Moreover, passing to a subsequence we can assume $d_s/\eg \to \sigma_{ss}$ exists, for all $s \geq 1$. (Note that $\sigma_{ss}$ may not be zero.) 
	
	Next, we will show that $\rho_s=0$, for all $s \geq 1$. For this recall the definition of the graph $G_{n, K}$ and $\{\lambda_{n, K}^{(s)}\}_{1 \leq s \leq n-K}$ from Assumption \ref{asum2}. Using the bound in \eqref{eq:Kdegree} gives, 
	$$N(K_{1,2},G_{n, K}) \lesssim \sum_{u=K+1}^n d_u^2 \le \frac1K\eg^2.$$ Thus, as $n\to \infty$ followed by $K\to\infty$,  
	\begin{align*}
		\frac1{\eg^2}\sum_{s=1}^{n-K} (\lambda_{n, K}^{(s)})^4 &  = \frac1{\eg^2}(\tr(A^{(K)})^4)  \\ 
		& \lesssim \frac1{\eg^2}[N(K_2,G_{n, K})+N(K_{1,2},G_{n, K})+N(C_4,G_{n, K})] \\ 
		& \le \frac1{\eg}+\frac1{K}+\frac{N(C_4,G_n)}{\eg^2} \to 0, 
	\end{align*} 
	where the last step uses \ref{bar}. This implies, 
	\begin{align*}
		\left(\max_{1 \leq s \leq n-K} \frac{1}{\sqrt{\eg}}\lambda_{n, K}^{(s)}\right)^4 \le \frac1{\eg^2}\sum_{s=1}^{n - K} (\lambda_{n, K}^{(s)}){^4} \to 0. 
	\end{align*}
	Hence $\rho_s=0$, for all $s \geq 1$. Hence, $Q_1, Q_2,$ and $Q_3$ in Theorem \ref{thm:main} are $N(0,\sum_{s=1}^{\infty} \sigma_{s s})$, $N(0,1-\sum_{s=1}^{\infty} \sigma_{ss})$, and zero, respectively. This establishes \ref{bcr}. 
	
	\section{Universality} 
	\label{sec:universality}

	In this section we discuss conditions under which the limiting distribution of $S_{G_n}({\bm X}_n)$ is `universal', that is, it does not depend on the marginal law of ${\bm X}$. Universality of random homogeneous sums has been extensively studied in the literature (see, for example, \cite{chatterjee_invariance,stability,weiner_chaos,rotar2} and {in the context of directed polymers \cite{akq,carcot,polychaos,marrel,csz2d}}). In particular, \cite[Theorem 4.1]{weiner_chaos} shows that a random homogeneous sum and its corresponding Gaussian counterpart are asymptotically close in law, when the maximum `influence' of the underlying independent random variables are controlled, where the influence of a variable roughly quantifies its contribution to  the overall configuration of the homogeneous sum (see \cite[Equation (1.5)]{weiner_chaos} for the formal definition). For random quadratic forms as in $S_{G_n}({\bm X}_n)$, the maximum influence turns out to be the maximum degree of the graph $G_n$ scaled by $|E(G_n)|$. Consequently, the aforementioned results imply that the limiting distribution of $S_{G_n}({\bm X}_n)$ is universal whenever the maximum average degree of $G_n$ is $o(|E(G_n)|)$. In the following proposition we recover this result as a corollary of Theorem \ref{thm:main} and also show that this condition is tight, in the sense that universality does not hold when the maximum degree is of the same order as $|E(G_n)|$. 
	
	\begin{corollary} 
		Suppose $\bm{X}_n=(X_1,X_2,\ldots,X_n)^\top$ be i.i.d.~mean $0$ and variance $1$ random variables with common distribution function $F$ and consider a sequence of graphs $\{G_n\}_{n \geq 1}$ with $G_n \in \mathscr{G}_n$  where $\mathscr{G}_n$ is as in Definition \ref{defg}.  {Consider the metric
		\begin{align*}
			d_L(X,Y):=\sup_{h \ : \ |h|+|h'|+|h''|+|h'''|\le L} \left|\Ex[h(X)]-\Ex[h(Y)]\right|
		\end{align*}}
		Then the following hold: 
		
		\begin{enumerate}
			\item[$(1)$]
			If $\max_{1 \leq u \leq n}  \frac{d_u}{|E(G_n)|}\to 0$, then for all $L>0$,
			\begin{align*}
				\lim_{n \rightarrow \infty} d_L(S_{G_n}({\bm X}_n),S_{G_n}({\bm Z}_n)) = 0,
			\end{align*}
			where ${\bm Z}_n=(Z_1,\ldots,Z_n)$ is a vector of i.i.d. $N(0,1)$ random variables. 
			
			\item[$(2)$]
			Conversely, if $\liminf_{n\to\infty}\max_{1 \leq u \leq n}\frac{d_u}{|E(G_n)|}>0$, then for any random variable $X_1$ satisfying $\Ex |X_1|^3<\infty$ and $\Ex[X_1^4]=\infty$, 
		{	\begin{align*}
				\liminf_{n\to\infty}d_L(S_{G_n}({\bm X}_n),S_{G_n}({\bm  Z}_n)) >0,
			\end{align*}
		for some $L>0$.}
		\end{enumerate} 
	\end{corollary}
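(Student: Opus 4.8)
The plan is to derive part~(1) from the quantitative Gaussian-replacement bound of Proposition~\ref{p:norm}(a) and part~(2) from the decomposition of Theorem~\ref{thm:main} together with a fourth-moment obstruction. For part~(1), since Proposition~\ref{p:norm}(a) needs finite third moments while no moment assumption beyond variance~$1$ is imposed here, I would first truncate: by Lemma~\ref{l:trunc}, $S_{G_n}(\bm X_n)-S_{G_n}(\bm X_{n,M})\to 0$ in probability uniformly in $n$ as $M\to\infty$, and for fixed $M$ the variables $\bm X_{n,M}$ are bounded with mean $0$ and variance $1$. Applying Proposition~\ref{p:norm}(a) to $\bm X_{n,M}$ with $h(x)=e^{\i tx}$ (bounded, thrice continuously differentiable, $\|h'''\|_\infty\le|t|^3$) gives $\phi_{S_{G_n}(\bm X_{n,M})}(t)-\phi_{S_{G_n}(\bm Z_n)}(t)\to 0$ for every $t\in\R$, since the bound is $O(\max_u d_u/|E(G_n)|)=o(1)$. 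Now take any subsequence of $n$; as $\{S_{G_n}(\bm Z_n)\}$ has second moment $1$ it is tight, so along a further subsequence $S_{G_n}(\bm Z_n)\stackrel{D}{\to}\mu$, and then by L\'evy's continuity theorem $S_{G_n}(\bm X_{n,M})\stackrel{D}{\to}\mu$ for every fixed $M$ along that subsequence; combining this with the uniform truncation estimate gives $S_{G_n}(\bm X_n)\stackrel{D}{\to}\mu$ along the subsequence, whence $d(S_{G_n}(\bm X_n),S_{G_n}(\bm Z_n))\to d(\mu,\mu)=0$ there. Since every subsequence admits such a further subsequence, $d(S_{G_n}(\bm X_n),S_{G_n}(\bm Z_n))\to 0$.

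\textbf{Part (2).} Suppose not; then $d(S_{G_n}(\bm X_n),S_{G_n}(\bm Z_n))\to 0$ along a subsequence, and refining further (using $\liminf_n\max_u d_u/|E(G_n)|>0$ and the compactness argument from the proof of Theorem~\ref{thm:converse}) I may assume along this subsequence that $\max_u d_u/|E(G_n)|\to c>0$ and that Assumptions~\ref{asum1} and~\ref{asum2} hold for some $\Sigma,\bm\rho$ that depend only on the graph. Since the vertices are ordered by non-increasing degree, $\sum_v a_{1,v}a_{v,1}=d_1=\max_u d_u$, so $\sigma_{11}=c>0$. Applying Theorem~\ref{thm:main} to $\bm X_n$ and to $\bm Z_n$ (the Gaussian case being covered as well) yields, along the subsequence,
\begin{align*}
S_{G_n}(\bm X_n)\stackrel{D}{\to}Q_1^X+Q_2+Q_3,\qquad S_{G_n}(\bm Z_n)\stackrel{D}{\to}Q_1^Z+Q_2+Q_3,
\end{align*}
where $Q_1^X\sim N(0,\bm X_\infty^\top\Sigma\bm X_\infty)$, $Q_1^Z\sim N(0,\bm Z_\infty^\top\Sigma\bm Z_\infty)$, and $Q_2,Q_3$ --- being determined by $\Sigma$ and $\bm\rho$ --- are the \emph{same} independent summands in both decompositions. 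As $d\to 0$ along the subsequence, the two limiting laws must coincide.

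To contradict this I would compare fourth moments. Set $W:=Q_2+Q_3$; by Proposition~\ref{p:lim2}, $W$ has a finite mgf near $0$, so $\Ex[W^4]<\infty$. On the Gaussian side, $\Ex[(Q_1^Z)^4]=3\,\Ex[(\bm Z_\infty^\top\Sigma\bm Z_\infty)^2]<\infty$, since the Gaussian quadratic form has all moments controlled by $\sum_s\sigma_{ss}\le 2$ and $\sum_{s,t}\sigma_{st}^2\le 4$ (cf.\ \eqref{eq:sumbd}--\eqref{eq:sqbd}), whence $\Ex[(Q_1^Z+Q_2+Q_3)^4]<\infty$. On the $X$ side, write $V:=\bm X_\infty^\top\Sigma\bm X_\infty\ge 0$, which lies in $L^1$ by Lemma~\ref{mim} and is the $L^1$-limit of $V_K=\sum_{s,t\le K}\sigma_{st}X_sX_t$; taking conditional expectations and using $\Ex[X_s]=0$ and $\Ex[X_sX_t]=0$ for $s\ne t$ gives $\Ex[V\mid X_1]=\sigma_{11}X_1^2+\sum_{s\ge 2}\sigma_{ss}$. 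Since $\sigma_{11}>0$ and $\sum_{s\ge 2}\sigma_{ss}\ge 0$, conditional Jensen yields
\begin{align*}
\Ex[V^2]\ge\Ex\big[(\Ex[V\mid X_1])^2\big]=\Ex\Big[\big(\sigma_{11}X_1^2+\textstyle\sum_{s\ge 2}\sigma_{ss}\big)^2\Big]\ge\sigma_{11}^2\,\Ex[X_1^4]=\infty,
\end{align*}
so $\Ex[(Q_1^X)^4]=3\,\Ex[V^2]=\infty$, and by the elementary pointwise bound $(a+b)^4\ge\tfrac18 a^4-b^4$ we get $\Ex[(Q_1^X+W)^4]\ge\tfrac18\Ex[(Q_1^X)^4]-\Ex[W^4]=\infty$. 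Hence the two limiting laws have different fourth moments, contradicting their equality in law, which proves $\liminf_n d(S_{G_n}(\bm X_n),S_{G_n}(\bm Z_n))>0$.

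The main obstacle is pinning down the $X$-side limit precisely enough to read off an infinite fourth moment: this hinges on the exact form of $Q_1$ in Theorem~\ref{thm:main} (a normal variance mixture with variance $\bm X_\infty^\top\Sigma\bm X_\infty$ and the \emph{same} $\Sigma$ as in the Gaussian case), on identifying $\sigma_{11}$ with the limiting normalized maximum degree, and on computing $\Ex[V\mid X_1]$ via the $L^1$-convergence furnished by Lemma~\ref{l1lim}/\ref{mim}. The remaining ingredients --- tightness, the compactness extraction of the subsequence, L\'evy's continuity theorem, the truncation step in part~(1), and the finite-moment bounds on the Gaussian side --- are routine.
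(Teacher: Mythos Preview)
Your argument is correct and follows the same overall strategy as the paper: part~(1) via the Lindeberg bound of Proposition~\ref{p:norm}(a), and part~(2) via the decomposition $Q_1+Q_2+Q_3$ together with a fourth-moment obstruction ($\Ex[Q_1^4]=\infty$ on the $X$-side while the Gaussian side has all moments finite).

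There are two minor differences worth noting. First, in part~(1) you insert a truncation step (Lemma~\ref{l:trunc}) before applying Proposition~\ref{p:norm}(a), whereas the paper applies it directly to $\bm X_n$. Your caution is warranted: the proof of Proposition~\ref{p:norm}(a) uses $\Ex|X_1|^3<\infty$, which is not assumed in part~(1) of the Corollary, so your route is the complete one. Second, in part~(2) you reach $\Ex[V^2]=\infty$ by conditioning on $X_1$ and using Jensen to get $\Ex[V\mid X_1]=\sigma_{11}X_1^2+\sum_{s\ge 2}\sigma_{ss}$, while the paper conditions on $\{X_u:u\ge 2\}$ and writes $V=\sigma_{11}X_1^2+X_1 f(\bm X_{-1})+g(\bm X_{-1})$; your argument is slightly cleaner in that it does not touch the cross term and therefore does not actually need the hypothesis $\Ex|X_1|^3<\infty$ at that step.
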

	
	\begin{proof} {Fix $\e>0$ and any $h$ with $|h|+|h'|+|h''|+|h'''|\le L$. By first-order Taylor expansion
	\begin{align*}
		\left|\Ex[h(S_{G_n}({\bm  X}_n))]-\Ex[h(S_{G_n}({\bm  X}_{n,M}))] \right|  \lesssim_L \Ex\left|S_{G_n}({\bm  X}_n)-S_{G_n}({\bm  X}_{n,M})\right| 
	\end{align*}
which by Lemma \ref{l:trunc} goes to zero as $n\to \infty$ followed by $M\to \infty$. Thus one can choose $M$ large enough so that 
			\begin{align*}
		\limsup_{n\to\infty}\left|\Ex[h(S_{G_n}({\bm  X}_n))]-\Ex[h(S_{G_n}({\bm  X}_{n,M}))] \right| \le \e.
		\end{align*}	
		Using part (a) of Proposition \ref{p:norm} we have
		\[|\Ex[h(S_{G_n}({\bm  X}_{n,M})]-\Ex[h(S_{G_n}({\bm  Z}_n))] |\lesssim_{M,L} \frac{\max_{1 \leq u \leq n}d_u}{|E(G_n)|}\to 0, \] 
		since $\frac{\max_{1 \leq u \leq n} d_u}{|E(G_n)|}\to 0$ by assumption. Thus by triangle inequality we have that 
		\begin{align*}
			\limsup_{n\to\infty}\left|\Ex[h(S_{G_n}({\bm  X}_n))]-\Ex[h(S_{G_n}({\bm  Z}_{n}))] \right| \le \e.
		\end{align*}
		Since the above two estimates are uniform over all possible $h$ with $|h|+|h'|+|h''|+|h'''|$ less than $L$, and as $\e$ is arbitrary,  we arrive at (1). }
		
		To show (2) note that by Theorem \ref{thm:moment4} any subsequential limit of $S_{G_n}({\bm  X}_n)$ and $S_{G_n}({\bm  Z}_n)$ is of the form $Q_1+Q_2+Q_3$, where $Q_1,Q_2,Q_3$ are mutually independent and as described in Theorem \ref{thm:main}. It follows from their definitions that $Q_2$ and $Q_3$ have finite mgfs in a neighborhood of $0$. Also, if $F=N(0,1)$, then $Q_1$ is an infinite sum of chi-squares (see Lemma \ref{lem:normalqf}), and hence, has a finite mgf in a neighborhood of $0$. This implies, any subsequential limit of $S_{G_n}({\bm  Z}_n)$ will have a finite mgf in a neighborhood of $0$. Therefore, to establish (2) it suffices to show if $\Ex|X_1|^3<\infty$ and $\Ex[X_1^4]=\infty$, then $Q_1$ does not have finite exponential moment in a neighborhood of zero. In fact, we will show the  stronger conclusion that $\Ex[Q_1^4]=\infty$. For this, setting $\bm X_{-1}=(X_2,X_3,\ldots)$ note that as in \eqref{eq:cbpf}, 
		\begin{align*}
			\bm X_{\infty}^\top \Sigma \bm X_{\infty}=\sigma_{11}X_1^2+ X_1f(\bm{X}_{-1})+g(\bm{X}_{-1}),
		\end{align*}
		for some functions $f$ and $g$. This gives, since $Q_1 \sim N(0, \bm X_{\infty}^\top \Sigma \bm X_{\infty})$, 
		\begin{align*}
			\Ex[Q_1^4]=3\Ex[({\bm  X}_\infty^\top \Sigma{\bm  X}_\infty)^2]
			=3\Ex\left[(\sigma_{11}X_1^2+ X_1f(\bm{X}_{-1})+g(\bm{X}_{-1}))^2\right] =  +\infty , 
		\end{align*}
		where the last step follows from the observation    
		\[\Ex \left[(\sigma_{11}X_1^2+ X_1f(\bm{X}_{-1})+g(\bm{X}_{-1}))^2| \{X_u, u\ge 2 \} \right]\stackrel{a.s.}{=} + \infty.\]
		This completes the proof of part (2).
	\end{proof}

\section{Conclusion and Future Directions} 
\label{sec:open}

\rev{ In this paper, we provide a complete characterization of all possible distributional limits of random quadratic forms with $\{0,1\}$-valued coefficients. We also provide necessary and sufficient conditions for the asymptotic normality of $\{0,1\}$-valued random quadratic forms, connecting it to the fourth-moment phenomenon. The natural next step is to consider quadratic forms with \textit{arbitrary} weights. We expect our arguments to hold when all the non-zero coefficients lie within an interval $[C^{-1},C]$, for some $C\ge 1$. However, for more general weights the combinatorial arguments in Section \ref{sec:independence} do not apply, and new ideas are needed. Establishing rates of convergence for our limit theorems is another interesting future direction. }

\rev{Going beyond quadratic forms, an important question is to understand all possible distributional limits for homogeneous sums of order $r \ge 3$. Considering $\{0, 1\}$-valued coefficients, it would be interesting to explore if extremal results for hypergraphs can be applied 
to characterize distributional limits in this setting.} \




\subsection*{Data availability statement} Data sharing not applicable to this article as no datasets were generated or analyzed during the current study.

\subsection*{Funding} Bhaswar B. Bhattacharya was partially supported by NSF CAREER grant DMS 2046393 and a Sloan Research Fellowship. Sayan Das was partially supported by supported by NSF grant DMS-1928930 and Fernholz Foundation's ``Summer Minerva Fellows'' program. Sumit Mukherjee was partially supported by NSF grants DMS 1712037 and DMS 2113414. The authors also thank the anonymous referees for their helpful comments. 

\appendix 

	\section{Proofs of Technical Lemmas}
\label{sec:appendix}

In this section we collect the proofs of various technical lemmas. We begin with a lemma about the asymptotic independence of two random variables which are conditionally jointly Gaussian.

\begin{lemma}\label{tech} Let $\{(U_n,V_n)\}_{n \geq 1}$ be a sequence of  bivariate random vectors and $\{\mathcal{F}_n\}_{n \geq 1}$ be a sequence of $\sigma$-fields. Assume the following conditions:
	\begin{enumerate} 
		\item[$(1)$] $(U_n,V_n) \mid \mathcal{F}_n \sim N_2({\bm \mu}_n,\Sigma_n)$,
		where the random variables 
		$$\bm \mu_n := \begin{pmatrix}   \mu_n(1) \\  \mu_n(2) 
		\end{pmatrix} \in \R^2
		\quad \text{and} \quad 
		\Sigma_n := \begin{pmatrix}   \Sigma_n(1,1) &   \Sigma_n(1, 2) \\  \Sigma_n(1, 2) &  \Sigma_n(2,2) 
		\end{pmatrix} \in \R^{2 \times 2}, 
		$$ 
	are tight in $\R^2$ and $\R^{2 \times 2}$, respectively.

	\item[$(2)$] $\mathrm{Cov}(U_n, V_n|\mathcal{F}_n)\stackrel{P}{\to}0.$
	
	\item[$(3)$] $\{  \mu_n(1),\Sigma_n(1,1)\}$ and $\{\mu_n(2),\Sigma_n(2,2)\}$ are mutually independent. 
	
\end{enumerate} 
Then $U_{n}$ and $V_{n}$ are asymptotically independent, that is, for any $t_1,t_2\in \R$ 
$$\lim_{n\to\infty}\Big| \Ex\left[e^{\i (t_1U_n+t_2V_n)}\right]-\Ex\left[e^{\i t_1U_n}\right] \Ex\left[ e^{\i t_2V_n}\right]\Big|=0.$$ 
\end{lemma}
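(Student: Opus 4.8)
The plan is to prove Lemma~\ref{tech} by conditioning on $\mathcal F_n$ and using the explicit form of the bivariate Gaussian characteristic function. Write $\phi_n(t_1,t_2) := \Ex[e^{\i(t_1 U_n + t_2 V_n)}]$. Conditioning on $\mathcal F_n$ and using that $(U_n,V_n)\mid \mathcal F_n \sim N_2(\bm\mu_n,\Sigma_n)$ gives
\[
\phi_n(t_1,t_2) = \Ex\Big[\exp\big(\i t_1\mu_n(1) + \i t_2\mu_n(2) - \tfrac12\big(t_1^2\Sigma_n(1,1) + 2t_1t_2\Sigma_n(1,2) + t_2^2\Sigma_n(2,2)\big)\big)\Big].
\]
The key point is that the cross term $e^{-t_1t_2\Sigma_n(1,2)}$ is the only obstruction to factorization. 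So the first step is to show this factor can be removed: by condition (2), $\Sigma_n(1,2) = \mathrm{Cov}(U_n,V_n\mid\mathcal F_n)\stackrel{P}{\to}0$, hence $e^{-t_1t_2\Sigma_n(1,2)}\stackrel{P}{\to}1$, and since this quantity is bounded by $e^{|t_1t_2|\,|\Sigma_n(1,2)|}$ which need not be uniformly bounded, I instead note that $|e^{-t_1t_2\Sigma_n(1,2)} - 1|\wedge 2$ works: more carefully, writing the whole integrand, its modulus is $\exp(-\tfrac12 t_1^2\Sigma_n(1,1) - \tfrac12 t_2^2\Sigma_n(2,2) - t_1t_2\Sigma_n(1,2))$. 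Using $\Sigma_n\succeq 0$ we have $|\Sigma_n(1,2)|\le \tfrac12(\Sigma_n(1,1)+\Sigma_n(2,2))$, so the integrand is bounded in modulus by $1$ when $|t_1|=|t_2|$ and in general by $\exp(\tfrac12|t_1t_2|(\Sigma_n(1,1)+\Sigma_n(2,2)))$; to get a clean dominated-convergence argument I will instead split off the cross term and bound
\[
\Big|\phi_n(t_1,t_2) - \Ex\big[e^{\i t_1\mu_n(1) - \frac12 t_1^2\Sigma_n(1,1)}\,e^{\i t_2\mu_n(2) - \frac12 t_2^2\Sigma_n(2,2)}\big]\Big| \le \Ex\big[\,|e^{-t_1t_2\Sigma_n(1,2)} - 1|\,\big],
\]
where I have used that the modulus of $e^{\i t_1\mu_n(1) - \frac12 t_1^2\Sigma_n(1,1)}e^{\i t_2\mu_n(2)-\frac12 t_2^2\Sigma_n(2,2)}$ is at most $1$. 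The right-hand side tends to $0$: indeed $|\Sigma_n(1,2)|\le \tfrac12(\Sigma_n(1,1)+\Sigma_n(2,2))$ and $\{\Sigma_n(1,1)\},\{\Sigma_n(2,2)\}$ are tight (condition (1)), so $\{t_1t_2\Sigma_n(1,2)\}$ is tight, hence $\{e^{-t_1t_2\Sigma_n(1,2)}\}_{n\ge1}$ is uniformly integrable; combined with $\Sigma_n(1,2)\stackrel{P}{\to}0$ this yields $\Ex[|e^{-t_1t_2\Sigma_n(1,2)}-1|]\to 0$.

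The second step is to factor the remaining expectation using condition (3). Since $\{\mu_n(1),\Sigma_n(1,1)\}$ and $\{\mu_n(2),\Sigma_n(2,2)\}$ are independent, the random variables $e^{\i t_1\mu_n(1) - \frac12 t_1^2\Sigma_n(1,1)}$ and $e^{\i t_2\mu_n(2) - \frac12 t_2^2\Sigma_n(2,2)}$ are independent, so
\[
\Ex\big[e^{\i t_1\mu_n(1) - \frac12 t_1^2\Sigma_n(1,1)}\,e^{\i t_2\mu_n(2) - \frac12 t_2^2\Sigma_n(2,2)}\big] = \Ex\big[e^{\i t_1\mu_n(1) - \frac12 t_1^2\Sigma_n(1,1)}\big]\,\Ex\big[e^{\i t_2\mu_n(2) - \frac12 t_2^2\Sigma_n(2,2)}\big].
\]
Finally, observe that $\Ex[e^{\i t_1\mu_n(1) - \frac12 t_1^2\Sigma_n(1,1)}] = \Ex[\Ex[e^{\i t_1 U_n}\mid \mathcal F_n]] = \Ex[e^{\i t_1 U_n}]$, because conditionally $U_n\sim N(\mu_n(1),\Sigma_n(1,1))$; similarly for $V_n$. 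Combining the two steps gives $|\phi_n(t_1,t_2) - \Ex[e^{\i t_1 U_n}]\Ex[e^{\i t_2 V_n}]|\to 0$, which is the claim.

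The main obstacle is the uniform integrability argument needed to upgrade $\Sigma_n(1,2)\stackrel{P}{\to}0$ to $\Ex[|e^{-t_1t_2\Sigma_n(1,2)}-1|]\to 0$; this is where the tightness hypothesis in condition (1) and the positive-semidefiniteness of $\Sigma_n$ (which bounds the off-diagonal entry by the diagonal entries) are essential. Everything else is a routine manipulation of Gaussian characteristic functions. Note also that this lemma is exactly what is invoked in the proof of Lemma~\ref{asymp-ind}, where $\mathcal F_n = \sigma(\bm X_n, \bm Z_n^{(2)})$, $U_n = U_{22,n}+U_{23,n}$, $V_n = U_{13,n,M}$, condition (2) is \eqref{eq:variance12}, and condition (3) follows from the independence of $(\mu_n^{(1)},\Sigma_n^{(1,1)})$ and $(\mu_n^{(2)},\Sigma_n^{(2,2)})$ noted there; the statement of Lemma~\ref{tech} is formulated for a single limit $n\to\infty$, so in applying it one first passes to the double limit structure via the tightness bounds established just before \eqref{eq:variance12}.
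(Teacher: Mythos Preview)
Your approach is the same as the paper's at heart---condition on $\mathcal F_n$, use the Gaussian characteristic function, and factor using condition (3)---but there is a real gap in your uniform integrability step.

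You assert that tightness of $\{\Sigma_n(1,2)\}$ implies uniform integrability of $\{e^{-t_1t_2\Sigma_n(1,2)}\}$. This implication is false: tightness of a real sequence $Y_n$ never controls the tails of $e^{Y_n}$. For a concrete counterexample compatible with all the hypotheses, set $\Sigma_n(1,1)=\Sigma_n(2,2)=-\Sigma_n(1,2)=\log n$ with probability $1/n$ and $\Sigma_n=0$ otherwise. Then $\Sigma_n$ is positive semidefinite, $\{\Sigma_n\}$ is tight, and $\Sigma_n(1,2)\stackrel{P}{\to}0$, yet $\Ex\big[|e^{-t_1t_2\Sigma_n(1,2)}-1|\big]=(n^{t_1t_2}-1)/n\to\infty$ whenever $t_1t_2>1$. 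So your bound $\Ex[|e^{-t_1t_2\Sigma_n(1,2)}-1|]$ need not tend to zero.

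The fix is immediate and in fact simplifies your argument. Do not isolate the cross term; instead pull only the unimodular factor $e^{\i t_1\mu_n(1)+\i t_2\mu_n(2)}$ outside and bound
\[
\Big|\phi_n(t_1,t_2)-\Ex\big[e^{\i t_1\mu_n(1)-\frac12 t_1^2\Sigma_n(1,1)}e^{\i t_2\mu_n(2)-\frac12 t_2^2\Sigma_n(2,2)}\big]\Big|
\le \Ex\Big[\big|e^{-\frac12(t_1^2\Sigma_n(1,1)+2t_1t_2\Sigma_n(1,2)+t_2^2\Sigma_n(2,2))}-e^{-\frac12 t_1^2\Sigma_n(1,1)-\frac12 t_2^2\Sigma_n(2,2)}\big|\Big].
\]
Both exponentials on the right are at most $1$ (the first because $\Sigma_n$ is positive semidefinite, the second because the diagonal entries are nonnegative), so the integrand is bounded by $2$ and converges to $0$ in probability since $\Sigma_n(1,2)\stackrel{P}{\to}0$. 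Bounded convergence now gives the conclusion, and you do not even need tightness of $\Sigma_n$ for this step. The paper's proof takes a slightly different route: it passes to a subsequence along which $(\bm\mu_n,\Sigma_n)$ converges in distribution and applies the portmanteau theorem to the bounded continuous map $(\bm\mu,\Sigma)\mapsto e^{\i(t_1\mu(1)+t_2\mu(2))-\frac12(t_1,t_2)\Sigma(t_1,t_2)^\top}$; your corrected direct argument avoids the subsequence entirely.
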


\begin{proof} 
Without loss of generality by passing to a subsequence, assume that $({\bm \mu}_n,\Sigma_n)\stackrel{D}{\to}({\bm \mu},\Sigma)$, where $(\bm \mu,\Sigma)$ is a random variable on $\R^2\times \R^{2\times 2}$. Then using the first assumption and setting ${\bm t}:=(t_1,t_2)$ gives 
\begin{align*}
	\Ex\left[e^{\i (t_1U_n+t_2V_n)}\right]=\Ex\left[ e^{\i {\bm t}^\top{\bm \mu}_n-\frac{1}{2}{\bm t}^\top \Sigma_n {\bm t}} \right]
	\to\Ex\left[ e^{\i {\bm t}^\top {\bm \mu}-\frac{1}{2}{\bm t}^\top \Sigma {\bm t}} \right].
\end{align*}
Using the second assumption now gives $\Sigma_n(1,2)=\mathrm{Cov}(U_n,V_n|\mathcal{F}_n)\stackrel{P}{\to}0$, that is, $\Sigma(1,2)=0$ almost surely. This implies, 
\begin{align}\label{eq:t1t2}
	\Ex\left[e^{\i {\bm t}^\top{\bm \mu}-\frac{1}{2}{\bm t}^\top \Sigma {\bm t}} \right] & = \Ex\left[ e^{\i t_1 \mu(1)-\frac{1}{2}t_1^2 \Sigma(1,1)} \right] \Ex\left[ e^{\i t_2 \mu(2)-\frac{1}{2} t_2^2 \Sigma(2, 2)} \right] , 
\end{align}
since $\{  \mu_n(1),\Sigma_n(1,1)\}$ and $\{\mu_n(2),\Sigma_n(2,2)\}$ are mutually independent by the third assumption. Since the RHS of \eqref{eq:t1t2} factorizes in $t_1,t_2$, it follows that $(U_n,V_n)$ are asymptotically independent, as desired. 
\end{proof}

The next lemma shows the finiteness of the moment generating function of 
a bilinear/quadratic function of sub-Gaussian random variables.

\begin{lemma}\label{l:mgf}	 Let $C_n$ be an $n\times n$ matrix (not necessarily symmetric) which satisfies 
\begin{align}\label{mgf-cond}
	\beta:=\sup_{n \ge 1}\|C_n\|_F <\infty,
\end{align}
where $\| \cdot \|_F$ denotes the Frobenius norm.
Suppose ${\bm Y}:=(Y_1,Y_2,\ldots,Y_n)^\top$ and ${\bm Z}:=(Z_1,Z_2, \ldots, Z_n)^\top$ are mutually independent mean $0$ random variables which are uniformly sub-Gaussian, that is,~there exists $\gamma>0$, such that for all $t\in \R$  
\begin{align}\label{eq:mgf_assumption}
	\sup_{n\ge 1} \max_{1\le u \le n} \max\Big\{\Ex\left[e^{tY_u }\right], \Ex\left[e^{t{Z}_u}\right]\Big\} \le e^{\gamma^2t^2/2}.    \end{align}
\begin{enumerate}
	\item[$(1)$] 
	Then there exists $\delta>0$ (depending only on $\beta,\gamma)$, such that
	\begin{align*}
		\sup_{|t|\le \delta}\sup_{n\ge 1} \Ex\left[e^{t{\bm Y}^\top C_n{\bm Z}}\right] <\infty.
	\end{align*}
	
	\item[$(2)$]
	If, furthermore, 
	\begin{align}\label{mgf-cond2}
		\alpha:=\sup_{n\ge 1}|{\rm tr}(C_n)| <\infty,
	\end{align}
	then there exists $\delta>0$ (depending only on $\alpha,\beta,\gamma)$, such that 
	\begin{align*}
		\sup_{|t|\le \delta}\sup_{n\ge 1} \Ex\left[e^{t{\bm Y}^\top C_n{\bm Y}}\right] <\infty.
	\end{align*}
\end{enumerate}
\end{lemma}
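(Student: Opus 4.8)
\textbf{Proof proposal for Lemma \ref{l:mgf}.}

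The plan is to reduce both parts to a uniform bound on the moment generating function of a single sub-Gaussian coordinate, using a decoupling/linearization argument. For part (1), I would first condition on $\bm Y$. Given $\bm Y$, the quantity $\bm Y^\top C_n \bm Z = \sum_{u=1}^n b_u Z_u$ is a linear combination of the independent sub-Gaussian variables $Z_1,\dots,Z_n$, where $\bm b = C_n^\top \bm Y$. By \eqref{eq:mgf_assumption} and independence, for every $t \in \R$,
\[
\Ex\big[e^{t \bm Y^\top C_n \bm Z} \,\big|\, \bm Y\big] = \prod_{u=1}^n \Ex\big[e^{t b_u Z_u}\big] \le \prod_{u=1}^n e^{\gamma^2 t^2 b_u^2/2} = e^{\gamma^2 t^2 \|\bm b\|^2/2},
\]
and $\|\bm b\|^2 = \bm Y^\top C_n C_n^\top \bm Y \le \|C_n\|_{\mathrm{op}}^2 \|\bm Y\|^2 \le \beta^2 \|\bm Y\|^2$, since $\|C_n\|_{\mathrm{op}} \le \|C_n\|_F \le \beta$. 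Taking expectation over $\bm Y$, it then suffices to bound $\Ex[e^{c t^2 \|\bm Y\|^2}]$ uniformly in $n$ for $c = \gamma^2\beta^2/2$ and $|t|$ small. This in turn follows because each $Y_u$ is sub-Gaussian, hence $Y_u^2$ is sub-exponential with a uniform parameter; more precisely $\Ex[e^{s Y_u^2}] \le \kappa < \infty$ for all $|s| \le s_0$, where $s_0, \kappa$ depend only on $\gamma$ (this is a standard consequence of \eqref{eq:mgf_assumption}), and then by independence $\Ex[e^{c t^2 \|\bm Y\|^2}] = \prod_u \Ex[e^{c t^2 Y_u^2}] \le \kappa^n$. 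A naive bound of $\kappa^n$ blows up, so the right move is to be slightly more careful: for $|s|$ small, $\Ex[e^{s Y_u^2}] \le 1 + C s \,\Ex[Y_u^2] + \dots \le e^{C' s}$ for a constant $C'$ depending only on $\gamma$ (using $\Ex[Y_u^2]\le$ const), so $\prod_u \Ex[e^{c t^2 Y_u^2}] \le e^{C' c t^2 n}$. This still grows in $n$ unless we track the $\|C_n\|_F$ bound more tightly. The correct fix, which I would implement, is to \emph{not} pull out the operator norm but instead work with the Hanson--Wright inequality directly (cf.\ \cite{rud}): for independent sub-Gaussian $\bm Y, \bm Z$ with parameter $\gamma$, there is a universal $c_0 > 0$ with $\Pr(|\bm Y^\top C_n \bm Z| > \tau) \le 2\exp\big(-c_0 \min\{\tau^2/(\gamma^4\|C_n\|_F^2),\, \tau/(\gamma^2 \|C_n\|_{\mathrm{op}})\}\big)$; since $\|C_n\|_F \le \beta$ and $\|C_n\|_{\mathrm{op}}\le\beta$ uniformly, integrating the tail gives a uniform sub-exponential-type bound on $\bm Y^\top C_n \bm Z$, and hence $\sup_{|t|\le\delta}\sup_n \Ex[e^{t \bm Y^\top C_n \bm Z}] < \infty$ for $\delta$ depending only on $\beta,\gamma$.

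For part (2), the difference is that $\bm Y^\top C_n \bm Y$ need not be centered: $\Ex[\bm Y^\top C_n \bm Y] = \sum_u (C_n)_{uu}\Ex[Y_u^2]$, which is bounded in absolute value by $\alpha \cdot \sup_u \Ex[Y_u^2]$, a quantity controlled by $\alpha$ and $\gamma$ via \eqref{mgf-cond2} and \eqref{eq:mgf_assumption}. Write $\bm Y^\top C_n \bm Y = \Ex[\bm Y^\top C_n \bm Y] + \big(\bm Y^\top C_n \bm Y - \Ex[\bm Y^\top C_n \bm Y]\big)$; the first term contributes a bounded multiplicative factor $e^{t\,\Ex[\bm Y^\top C_n \bm Y]}$, uniformly in $n$ for $|t|\le\delta$, and the second (centered) term is exactly the quantity controlled by the Hanson--Wright inequality in its quadratic-form version, which again yields a uniform sub-exponential bound depending only on $\alpha,\beta,\gamma$. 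Symmetrizing $C_n$ (replacing it by $(C_n + C_n^\top)/2$, which leaves the quadratic form unchanged and does not increase $\|\cdot\|_F$ or $|\mathrm{tr}(\cdot)|$) lets one apply the symmetric form of Hanson--Wright cleanly.

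The main obstacle I anticipate is purely bookkeeping: making sure all the constants $\delta$ (and the final finite bound) genuinely depend only on $\alpha,\beta,\gamma$ and not on $n$ — this is where the uniform hypotheses \eqref{mgf-cond}, \eqref{mgf-cond2}, \eqref{eq:mgf_assumption} must be invoked at exactly the right places, and where the temptation to use an $n$-dependent product bound must be resisted in favor of the Hanson--Wright route. If one prefers to avoid citing Hanson--Wright, an alternative is a direct chaos-expansion / combinatorial moment estimate: bound $\Ex[(\bm Y^\top C_n \bm Y)^{2k}]$ by a sum over pair partitions, each term controlled by powers of $\|C_n\|_F^2$ and $|\mathrm{tr}(C_n)|$, giving $\Ex[(\bm Y^\top C_n \bm Y)^{2k}] \le (Ck)^{2k}$ for $C = C(\alpha,\beta,\gamma)$, from which the mgf bound for $|t| < 1/(Ce)$ follows by summing the Taylor series — but this is more laborious, so I would present the Hanson--Wright argument as the primary one.
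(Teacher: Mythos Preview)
Your proposal is correct and lands on the same tool as the paper: the Hanson--Wright inequality (\cite{rud}), with the constants controlled by $\alpha,\beta,\gamma$ exactly as you describe. One useful remark: the conditioning route you started for part (1) and then abandoned is precisely how the paper proceeds, and it does work---the misstep was bounding $\|C_n^\top \bm Y\|^2 \le \beta^2\|\bm Y\|^2$ via the operator norm, which throws away the Frobenius structure. Instead, after conditioning on $\bm Y$ one has
\[
\Ex\big[e^{t\bm Y^\top C_n \bm Z}\big] \le \Ex\big[e^{\frac{\gamma^2 t^2}{2}\,\bm Y^\top C_n C_n^\top \bm Y}\big],
\]
and now apply Hanson--Wright to the \emph{quadratic form} $\bm Y^\top C_n C_n^\top \bm Y$: since $\|C_n C_n^\top\|_F \le \|C_n\|_F^2 \le \beta^2$, $\|C_n C_n^\top\|_2 \le \beta^2$, and $\Ex[\bm Y^\top C_n C_n^\top \bm Y] \lesssim \gamma^2\,\mathrm{tr}(C_n C_n^\top)=\gamma^2\|C_n\|_F^2 \le \gamma^2\beta^2$, all the inputs are uniform in $n$ and no product-over-coordinates blow-up occurs. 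This is marginally cleaner than invoking the bilinear version of Hanson--Wright. Your treatment of part (2) matches the paper's exactly.
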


\begin{proof} Using the mutual independence and the sub-Gaussian assumption \eqref{eq:mgf_assumption}, 
\begin{align*}
	\Ex\left[e^{t {\bm Y}^\top C_n{\bm Z} } \right] = \Ex\left[\Ex\left[e^{t {\bm Y}^\top C_n{\bm Z} }\Big | \bm Z \right] \right]   \le \Ex \left[e^{\frac{t^2\gamma^2}{2} {\bm Z}^\top C_n^\top C_n{\bm Z}}\right].
\end{align*}
Now, use \eqref{mgf-cond} and \eqref{eq:mgf_assumption} to note that
$$\Ex[{\bm Z}^\top  C_n^\top C_n{\bm Z}]\le K \gamma^2{\rm tr}(C_n^\top C_n) \le K\beta^2 \gamma^2,$$
for some universal constant $K$. To complete the proof, invoking the Hanson-Wright's inequality for quadratic forms (\cite[Theorem 1.1]{rud}), it suffices to show that 
$$\sup_{n\ge 1}\|C_n^\top C_n\|_F<\infty \quad \text{and} \quad \sup_{n\ge 1}\|C_n^\top C_n\|_2<\infty,$$
where $\|\cdot\|_2$ denotes the operator norm. This follows from \eqref{mgf-cond} and noting that 
$$\|C_n^\top C_n\|_F\le \|C_n\|_F^2\le \beta^2 \quad \text{and} \quad \|C_n^\top C_n\|_2\le \|C_n^\top C_n\|_F\le \beta^2.$$ 
This completes the proof of (1). 

For (2) by the Hanson-Wright's inequality and the sub-Gaussian assumption \eqref{eq:mgf_assumption}, it suffices to check the following conditions:
$$\sup_{n\ge 1}\Ex[{\bm Y}^\top C_n{\bm Y}]<\infty, \quad \sup_{n\ge 1}\|C_n\|_F<\infty, \quad \text{and} \quad \sup_{n\ge 1}\|C_n\|_2<\infty. $$
Note that the second and third bounds are immediate from \eqref{mgf-cond}. For the first bound, by the sub-Gaussianity assumption there is some universal constant $K_1>0$ such that 
\begin{align*}
	| \Ex[{\bm Y}^\top C_n{\bm Y}]|\le K_1\gamma^2|{\rm tr}(C_n)|\le K_1 \gamma^2 \alpha , 
\end{align*}
where the last step uses \eqref{mgf-cond2}. This completes the proof of part (2). 
\end{proof}

The next lemma shows that the normal variance mixture $Q_1$ in \eqref{eq:main} when $F= N(0, 1)$, is a weighted sum of centered chi-squared random variables. Theorem \ref{thm:main} then implies that the limiting distribution of $S_{G_n}$ when $F= N(0, 1)$ is a sum of a Gaussian random variable and an independent weighted sum of centered chi-squared random variables (recall Remark \ref{remark:normalqf}).

\begin{lemma}\label{lem:normalqf} Suppose $\{X_s\}_{ s \geq 1 }$ be i.i.d.~ $N(0, 1)$and let $\Sigma=((\sigma_{st}))_{s,t\ge 1}$ be an infinite dimensional matrix which satisfies the following  conditions:
\begin{itemize}
	\item For every $n\ge 1$ the $n\times n$ leading principle sub-matrix $\Sigma_n:=((\sigma_{st}))_{1\le s,t\le n}$ is positive semi-definite.
	
	\item $\sum_{s=1}^\infty\sigma_{ss}<\infty$ and $\sum_{s,t=1}^\infty \sigma_{st}^2<\infty$. 
\end{itemize}
Then the following conclusions hold:
\begin{itemize} 
	
	\item[$(1)$] With $\bm{X}_n:=(X_1,\ldots,X_n)$, the sequence of random variables $\bm{X}_{n}^\top \Sigma_n\bm{X}_n$ converges in $L^2$ to  some random variable, which is denoted by  $\bm{X}_{\infty}^\top \Sigma \bm{X}_\infty$.

	\item[$(2)$] 
	The random variable $Q_1\sim N(0,\bm{X}_{\infty}^{\top}\Sigma \bm{X}_{\infty})$ has the same distribution as $\sum_{s=1}^{\infty} \eta_s Y_s$, where $\{Y_s\}_{s\ge 1}$ are i.i.d.~$\chi_1^2-1$ random variables and for some sequence $\{\eta_s\}_{s\ge 1}$ which is square summable.

\end{itemize}
\end{lemma}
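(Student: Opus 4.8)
The plan is to handle the two parts of the lemma separately, the first by a direct second-moment computation and the second by reducing to a Laplace-transform identity via the spectral theorem.

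For part~(1), note first that positive semi-definiteness of every $\Sigma_n$ forces $\sigma_{ss}\ge 0$ for all $s$, so both summability hypotheses are genuinely available. Write $W_n:=\bm{X}_n^{\top}\Sigma_n\bm{X}_n=D_n+N_n$ where $D_n:=\sum_{s\le n}\sigma_{ss}X_s^2$ and $N_n:=\sum_{1\le s\ne t\le n}\sigma_{st}X_sX_t$. Since $\{X_s^2\}$ are i.i.d.\ with mean $1$ and $\vr(X_1^2)=2$, a direct computation gives $\Ex[(D_n-D_m)^2]=\big(\sum_{m<s\le n}\sigma_{ss}\big)^2+2\sum_{m<s\le n}\sigma_{ss}^2$, which tends to $0$ as $m\to\infty$ uniformly in $n>m$, because $\sum_s\sigma_{ss}<\infty$ (so in particular $\sum_{s>m}\sigma_{ss}^2\le(\sup_s\sigma_{ss})\sum_{s>m}\sigma_{ss}\to 0$). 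For $N_n$ the summands $\sigma_{st}X_sX_t$ are mutually uncorrelated apart from the coupling of $(s,t)$ with $(t,s)$, whence $\Ex[(N_n-N_m)^2]=2\sum_{1\le s\ne t,\ m<\max(s,t)\le n}\sigma_{st}^2\le 2\sum_{\max(s,t)>m}\sigma_{st}^2\to 0$ using $\sum_{s,t}\sigma_{st}^2<\infty$. Hence $\{W_n\}$ is Cauchy in $L^2$; denote its limit by $\bm{X}_{\infty}^{\top}\Sigma\bm{X}_{\infty}$, which then has finite second moment, is non-negative as an $L^2$-limit of the non-negative variables $W_n$, and satisfies $\Ex[\bm{X}_{\infty}^{\top}\Sigma\bm{X}_{\infty}]=\lim_n\mathrm{tr}(\Sigma_n)=\sum_s\sigma_{ss}$.

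For part~(2), I would first obtain a product formula for the Laplace transform of $W_\infty:=\bm{X}_{\infty}^{\top}\Sigma\bm{X}_{\infty}$. Because $\sum_{s,t}\sigma_{st}^2<\infty$, the symmetric matrix $\Sigma$ induces a Hilbert--Schmidt (hence bounded, self-adjoint) operator on $\ell^2$; positive semi-definiteness of every $\Sigma_n$ passes to the operator by density of finitely supported vectors, and $\sum_s\sigma_{ss}<\infty$ makes it trace class with eigenvalues $\lambda_1\ge\lambda_2\ge\cdots\ge 0$ satisfying $\sum_k\lambda_k=\mathrm{tr}(\Sigma)=\sum_s\sigma_{ss}$. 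Let $\mu_1^{(n)}\ge\cdots\ge\mu_n^{(n)}\ge 0$ be the eigenvalues of $\Sigma_n$. By Cauchy interlacing $\mu_j^{(n)}$ is non-decreasing in $n$ for each fixed $j$, while the Courant--Fischer min--max principle over subspaces of $\ell^2$ gives $\mu_j^{(n)}\le\lambda_j$; combined with $\sum_j(\lambda_j-\mu_j^{(n)})=\mathrm{tr}(\Sigma)-\mathrm{tr}(\Sigma_n)\to 0$ and non-negativity of the summands, this yields $\mu_j^{(n)}\uparrow\lambda_j$ for every $j$. Diagonalizing the Gaussian quadratic form gives, for $\theta\ge 0$, $\Ex[e^{-\theta W_n}]=\prod_{j=1}^n(1+2\theta\mu_j^{(n)})^{-1/2}$, and since $W_n\to W_\infty$ in $L^2$ (hence in distribution, with $w\mapsto e^{-\theta w}$ bounded continuous on $[0,\infty)$) and $\sum_j\log(1+2\theta\mu_j^{(n)})\to\sum_k\log(1+2\theta\lambda_k)$ by dominated convergence (terms dominated by the summable $2\theta\lambda_k$), we obtain $\Ex[e^{-\theta W_\infty}]=\prod_k(1+2\theta\lambda_k)^{-1/2}$ for all $\theta\ge 0$.

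Finally, since $Q_1\sim N(0,W_\infty)$ we get $\varphi_{Q_1}(t)=\Ex[e^{-t^2 W_\infty/2}]=\prod_k(1+t^2\lambda_k)^{-1/2}$. To match this with an infinite weighted sum of centered chi-squares, set $\eta_{2k-1}:=\tfrac12\sqrt{\lambda_k}$ and $\eta_{2k}:=-\tfrac12\sqrt{\lambda_k}$, so that $\sum_s\eta_s^2=\tfrac12\sum_k\lambda_k<\infty$; then for i.i.d.\ $\{Y_s\}_{s\ge 1}\sim\chi_1^2-1$ the series $\sum_s\eta_sY_s$ converges by Kolmogorov's three-series theorem, and using $\Ex[e^{i\alpha(\chi_1^2-1)}]=e^{-i\alpha}(1-2i\alpha)^{-1/2}$ its characteristic function equals $\prod_k e^{-it\sqrt{\lambda_k}/2}(1-it\sqrt{\lambda_k})^{-1/2}\,e^{it\sqrt{\lambda_k}/2}(1+it\sqrt{\lambda_k})^{-1/2}=\prod_k(1+t^2\lambda_k)^{-1/2}=\varphi_{Q_1}(t)$, so $Q_1\stackrel{D}{=}\sum_s\eta_sY_s$ with $(\eta_s)$ square summable, as claimed. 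The sign pattern mirrors the elementary identity $gh=\tfrac12\big((\tfrac{g+h}{\sqrt2})^2-(\tfrac{g-h}{\sqrt2})^2\big)$ for independent standard Gaussians $g,h$, which is what turns a Gaussian variance mixture into a difference of chi-squares. The main obstacle is the passage in part~(2) from the finite-section identity $\Ex[e^{-\theta W_n}]=\prod_j(1+2\theta\mu_j^{(n)})^{-1/2}$ to the operator-level product formula, i.e.\ controlling how the spectra of the truncations $\Sigma_n$ converge to that of $\Sigma$; once that is in hand, everything else is routine bookkeeping.
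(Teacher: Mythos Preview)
Your proof is correct and follows essentially the same route as the paper's. Part~(1) is identical in spirit: both split into diagonal and off-diagonal pieces and bound the $L^2$ increment using $\sum_s\sigma_{ss}<\infty$ and $\sum_{s,t}\sigma_{st}^2<\infty$. For part~(2), the paper avoids the operator-theoretic framing entirely: it simply defines $\lambda_s:=\lim_n\lambda_{s,n}$ (existence by interlacing/monotonicity), uses Fatou, monotone convergence and Scheff\'e to get $\sum_s|\lambda_{s,n}-\lambda_s|\to 0$, then shows $\bm{X}_n^\top\Sigma_n\bm{X}_n\stackrel{D}{=}\sum_{s\le n}\lambda_{s,n}X_s^2\stackrel{L^1}{\to}\sum_s\lambda_sX_s^2$, after which the characteristic-function matching with $\pm\tfrac12\sqrt{\lambda_s}$ is identical to yours. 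Your detour through the trace-class operator on $\ell^2$ and Courant--Fischer to identify $\lim_n\mu_j^{(n)}=\lambda_j$ as the operator eigenvalues is a slightly heavier but perfectly valid packaging of the same convergence; the paper's version is marginally more elementary since it never needs to name the limiting operator.
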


\begin{proof} 

	For any $m,n\ge 1$, 
	\begin{align*}
		\Ex[{\bm X}_{m+n}^\top \Sigma_{m+n}{\bm X}_{m+n}-{\bm X}_n^\top \Sigma_n {\bm X}_n]^2\lesssim & \Big(\sum_{s=n+1}^{n+m} \sigma_{ss}\Big)^2+\sum_{s=n+1}^{n+m} \sum_{t=1}^{m+n}\sigma_{st}^2 \\
		\le &\Big(\sum_{s=n+1}^\infty \sigma_{ss}\Big)^2+\sum_{s=n+1}^\infty\sum_{t=1}^\infty \sigma_{st}^2,
	\end{align*}
	which converges to $0$ as $n\to\infty$, under the assumptions on $((\sigma_{st}))_{s,t\ge 1}$. Thus the sequence of random variables $\bm{X}_n^\top \Sigma_n \bm{X}_n$ is Cauchy in $L^2$, and hence,  converges to a random variable $\bm{X}_\infty^\top\Sigma\bm{X}_\infty$, proving (1).

	Now, let $\lambda_{1,n}\ge \lambda_{2,n} \ge \cdots \ge \lambda_{n,n} \ge 0$ be the eigenvalues of $\Sigma_n$. 
	Due to the interlacing of the eigenvalues it follows that for each $s\ge 1$, the sequence $\{\lambda_{s,n}\}_{n \geq 1}$ is increasing in $n$. Hence, $\lambda_s:=\lim_{n \rightarrow \infty} \lambda_{s,n}$ exists, for all $s \geq 1$. Since $\sum_{s=1}^n \lambda_{s,n}= \tr(\Sigma_n) = \sum_{s=1}^n \sigma_{ss}\le \sum_{s=1}^\infty \sigma_{ss}$, 
	taking limits along with Fatou's lemma gives
	\begin{align*}
		\sum_{s=1}^\infty \lambda_s\le \sum_{s=1}^\infty \sigma_{ss}<\infty. 
	\end{align*}
	Moreover, by the Monotone Convergence Theorem, 
	$$\sum_{s=1}^n\lambda_{s,n}\to \sum_{s=1}^\infty \lambda_s.$$
	Combining the last two displays together with Scheffe's Lemma gives, 
	\begin{align}\label{eq:Fatou}\sum_{s=1}^n|\lambda_{s,n}-\lambda_s|\to 0.
	\end{align}
	Now, by the spectral decomposition $\Sigma_n = P_n^{\top }\Lambda_n P_n$ and noting that $P_n{\bm X}_n\stackrel{D}{=}{\bm X}_n$ (since $P_n$ is an orthogonal matrix) it follows that 
	$${\bm X}_n^\top \Sigma_n{\bm X}_n={\bm X}_n^\top P_n^\top \Lambda_n P_n{\bm X}_n\stackrel{D}{=}{\bm X}_n^\top  \Lambda_n{\bm X}_n=\sum_{s=1}^n\lambda_{s,n}X_s^2 \stackrel{L^1}{\to} \sum_{s=1}^\infty \lambda_s X_s^2, $$
	where the last line uses \eqref{eq:Fatou}.
	The last display, along with part (1) gives
	$${\bm X}_\infty^\top\Sigma{\bm X}_\infty\stackrel{D}{=}\sum_{s=1}^\infty \lambda_s X_s^2. $$
	{With this representation we can compute the characteristic function of $Q_1$ as follows:
		\begin{align}\label{eq:q1normalcf}
			\Ex[e^{\i t Q_1}]  = \Ex\left[e^{-\frac{t^2}{2} \sum_{s=1}^\infty
				\lambda_s X_s^2 } \right ] = \prod_{s=1}^\infty \Ex\left[e^{- \frac{t^2}{2}
				\lambda_s X_s^2 } \right]= \prod_{s=1}^\infty (1+  t^2\lambda_s)^{-\frac{1}{2}}.   \end{align}
		Now, observe that if $Y_1,Y_2$ are independent $\chi_1^2-1$ distributed random variables, the characteristic function of $\frac{\sqrt{\lambda_s}}{2} (Y_1-Y_2)$ is given by 
		$$ \Ex\left[e^{\i t \frac{\sqrt{\lambda_s}}{2} (Y_1-Y_2)}\right] = (1-\sqrt{\lambda_s} \i t)^{-\frac{1}{2}}(1+\sqrt{ \lambda_s} \i t)^{-\frac{1}{2}}=(1+t^2\lambda_s)^{-\frac{1}{2}}.$$ 
		This together with \eqref{eq:q1normalcf} shows that
		$$Q_1\stackrel{D}{=}\sum_{s=1}^\infty \frac{\sqrt{\lambda_s}}{2} (Y_{1, s}-Y_{2, s}),$$
		where $\{Y_{a, s}\}_{1\le a\le 2, s\ge 1}$ is a collection of i.i.d. $\chi_1^2-1$ random variables. This completes the proof of (2). } 
\end{proof}


\end{document}